\newtheorem{theorem}{Theorem}[section]
\newtheorem{lemma}[theorem]{Lemma}
\newtheorem{corollary}[theorem]{Corollary}
\newtheorem{proposition}[theorem]{Proposition}
\theoremstyle{definition}
\newtheorem{definition}[theorem]{Definition}
\newtheorem{example}[theorem]{Example}
\newcommand{\nc}{\newcommand}
\newcommand{\delete}[1]{}
\def\bc{\begin{center}}
	\def\ec{\end{center}}
\nc{\tred}[1]{\textcolor{red}{#1}}
\nc{\tblue}[1]{\textcolor{blue}{#1}} \nc{\tgreen}[1]{\textcolor{green}{#1}} \nc{\tpurple}[1]{\textcolor{purple}{#1}} \nc{\btred}[1]{\textcolor{red}{\bf #1}} \nc{\btblue}[1]{\textcolor{blue}{\bf #1}} \nc{\btgreen}[1]{\textcolor{green}{\bf #1}} \nc{\btpurple}[1]{\textcolor{purple}{\bf #1}}
\newcommand{\efootnote}[1]{}
\nc{\mlabel}[1]{\label{#1}}  
\nc{\mcite}[1]{\cite{#1}}  
\nc{\mref}[1]{\ref{#1}}  
\nc{\meqref}[1]{\eqref{#1}}  
\nc{\mbibitem}[1]{\bibitem{#1}} 
	\nc{\mlabel}[1]{\label{#1}  
		{\hfill \hspace{1cm}{\bf{{\ }\hfill(#1)}}}}
	\nc{\mcite}[1]{\cite{#1}{{\bf{{\ }(#1)}}}}  
	\nc{\mref}[1]{\ref{#1}{{\bf{{\ }(#1)}}}}  
	\nc{\meqref}[1]{\eqref{#1}{{\bf{{\ }(#1)}}}}  
	\nc{\mbibitem}[1]{\bibitem[\bf #1]{#1}} 
\renewcommand\geq{\geqslant}
\renewcommand\leq{\leqslant}
\renewcommand\bar[1]{\overline{#1}}
\nc{\name}[1]{{\bf #1}}
\nc{\tforall}{\quad \text{ for all }}
\nc{\mre}{\mathrm{Re}\,}
\nc{\mim}{\mathrm{im}\,}
\nc{\nz}{\varepsilon}
\nc{\Id}{\mathrm{Id}}
\nc{\DO}{\mathrm{DO}}
\nc{\IDO}{\mathrm{IDO}}
\nc{\mnoindent}{\smallskip\noindent}
\nc{\bin}[2]{ (_{\stackrel{\scs{#1}}{\scs{#2}}})}  
\nc{\binc}[2]{ \left (\!\! \begin{array}{c} \scs{#1}\\
		\scs{#2} \end{array}\!\! \right )}  
\nc{\bincc}[2]{  \left ( {\scs{#1} \atop
		\vspace{-1cm}\scs{#2}} \right )}  
\nc{\bs}{\bar{S}} \nc{\cosum}{\sqsubset} \nc{\la}{\longrightarrow} \nc{\rar}{\rightarrow} \nc{\dar}{\downarrow} \nc{\dprod}{**} \nc{\dap}[1]{\downarrow \rlap{$\scriptstyle{#1}$}} \nc{\md}[1]{\bar{#1}} \nc{\uap}[1]{\uparrow \rlap{$\scriptstyle{#1}$}} \nc{\defeq}{\stackrel{\rm def}{=}} \nc{\disp}[1]{\displaystyle{#1}} \nc{\dotcup}{\ \displaystyle{\bigcup^\bullet}\ } \nc{\gzeta}{\bar{\zeta}} \nc{\hcm}{\ \hat{,}\ } \nc{\hts}{\hat{\otimes}} \nc{\barot}{{\otimes}} \nc{\free}[1]{\bar{#1}} \nc{\uni}[1]{\tilde{#1}} \nc{\hcirc}{\hat{\circ}} \nc{\leng}{\ell} \nc{\lleft}{[} \nc{\lright}{]} \nc{\lc}{\lfloor} \nc{\rc}{\rfloor}
\nc{\lb}{[} 
\nc{\rb}{]} 
\nc{\curlyl}{\left \{ \begin{array}{c} {} \\ {} \end{array}
	\right.  \!\!\!\!\!\!\!}
\nc{\curlyr}{ \!\!\!\!\!\!\!
	\left. \begin{array}{c} {} \\ {} \end{array}
	\right \} }
\nc{\longmid}{\left | \begin{array}{c} {} \\ {} \end{array}
	\right. \!\!\!\!\!\!\!}
\nc{\onetree}{\bullet} \nc{\ora}[1]{\stackrel{#1}{\rar}}
\nc{\ola}[1]{\stackrel{#1}{\la}}
\nc{\ot}{\otimes} \nc{\mot}{{{\boxtimes\,}}} \nc{\otm}{\overline{\boxtimes}} \nc{\sprod}{\bullet} \nc{\scs}[1]{\scriptstyle{#1}} \nc{\mrm}[1]{{\rm #1}} \nc{\msum}{\sum\limits}
\nc{\margin}[1]{\marginpar{\rm #1}}   
\nc{\dirlim}{\displaystyle{\lim_{\longrightarrow}}\,} \nc{\invlim}{\displaystyle{\lim_{\longleftarrow}}\,} \nc{\mvp}{\vspace{0.3cm}} \nc{\tk}{^{(k)}} \nc{\tp}{^\prime} \nc{\ttp}{^{\prime\prime}} \nc{\svp}{\vspace{2cm}} \nc{\vp}{\vspace{8cm}} \nc{\proofbegin}{\noindent{\bf Proof: }}
\nc{\proofend}{$\blacksquare$ \vspace{0.3cm}}
\nc{\modg}[1]{\!<\!\!{#1}\!\!>}
\nc{\intg}[1]{F_C(#1)} \nc{\lmodg}{\!<\!\!} \nc{\rmodg}{\!\!>\!} \nc{\cpi}{\widehat{\Pi}}
\nc{\sha}{{\mbox{\cyr X}}}  
\nc{\shap}{{\mbox{\cyrs X}}} 
\nc{\shpr}{\diamond}    
\nc{\shp}{\ast} \nc{\shplus}{\shpr^+}
\nc{\shprc}{\shpr_c}    
\nc{\msh}{\ast} \nc{\zprod}{m_0} \nc{\oprod}{m_1} \nc{\vep}{\varepsilon} \nc{\labs}{\mid\!} \nc{\rabs}{\!\mid}
\nc{\astarrow}{\overset{\raisebox{-3pt}{$\ast$}}{\rightarrow}}
\nc{\sqsym}{Stirling quasisymmetric function\xspace}
\nc{\sqsyms}{Stirling quasisymmetric functions\xspace}
\nc{\EEsym}{\mathbb{E}sym}
\nc{\Sym}{\mrm{Sym}}
\nc{\NSym}{\mrm{NSym}}
\nc{\QSym}{\mrm{QSym}}
\nc{\RQSym}{\mrm{RQSym}}
\nc{\RenQSym}{\mrm{WCQSym}}	
\nc{\DQSym}{\mrm{DQSym}}
\nc{\WDQSym}{\mrm{WDQSym}}
\nc{\DLQSym}{\mrm{DLQSym}}
\nc{\ZQSym}{\mrm{ZQSym}}
\nc{\Ensym}{\mrm{ENSym}}
\nc{\Wcsym}{\mrm{WCSym}}
\nc{\LWQSym}{\mrm{LWQSym}}
\nc{\LWCQSym}{\mrm{\mathrm{LWQSym}}}
\nc{\Wcqsym}{\mrm{QSym}_{\widetilde{\mathbb{N}}}}
\nc{\Syms}{symmetric functions\xspace}
\nc{\eqsym}{extended quasisymmetric function\xspace}
\nc{\eqsyms}{extended quasisymmetric functions\xspace}
\nc{\Eqsyms}{Extended Quasisymmetric functions\xspace}
\nc{\Esyms}{Extended symmetric functions\xspace}
\nc{\sgqsym}{quasisymmetric function with semigroup exponents\xspace}
\nc{\sgqsyms}{quasisymmetric functions with semigroup exponents\xspace}
\nc{\Sgqsyms}{Quasisymmetric functions with semigroup exponents\xspace}
\nc{\SGQSYM}{\mrm{SGQSYM}}
\nc{\emzv}{extended multiple zeta value}
\nc{\emzvs}{extended multiple zeta values}
\nc{\sgfps}{formal power series with semigroup exponent\xspace}
\nc{\NSymg}{\mathrm{NSym}_\gp}
\nc{\zqsym}{zeta-quasisymmetric }
\nc{\gslwqsym}{Stirling left weak quasisymmetric function\xspace}
\nc{\gslwqsyms}{Stirling left weak quasisymmetric functions\xspace}
\nc{\ulwb}{upper-left weak bicomposition\xspace}
\nc{\ulwbs}{upper-left weak bicompositions\xspace}
\nc{\parr}{\rm Par}
\nc{\wpar}{\rm WPar}
\nc{\wcomp}{\large{\VDash}}
\nc{\Ker}{\ker}
\nc{\dth}{d} \nc{\mmbox}[1]{\mbox{\ #1\ }} \nc{\fp}{\mrm{FP}} \nc{\rchar}{\mrm{char}} \nc{\Fil}{\mrm{Fil}} \nc{\Mor}{Mor\xspace} \nc{\gmzvs}{gMZV\xspace} \nc{\gmzv}{gMZV\xspace} \nc{\mzv}{MZV\xspace} \nc{\mzvs}{MZVs\xspace}
\nc{\MZV}{\mathrm{MZV}}
\nc{\Hom}{\mrm{Hom}} \nc{\id}{\mrm{id}} \nc{\im}{\mrm{im}} \nc{\incl}{\mrm{incl}}  \nc{\mchar}{\rm char}
\nc{\Alg}{\mathbf{Alg}} \nc{\Bax}{\mathbf{Bax}} \nc{\bff}{\mathbf f} \nc{\bfk}{{\bf k}} \nc{\bfone}{{\bf 1}} \nc{\bfx}{\mathbf x} \nc{\bfy}{\mathbf y}
\nc{\base}[1]{\bfone^{\otimes ({#1}+1)}} 
\nc{\Cat}{\mathbf{Cat}} \delete{}
\nc{\detail}{\marginpar{\bf More detail}
	\noindent{\bf Need more detail!}
	\svp}
\nc{\Int}{\mathbf{Int}} \nc{\Mon}{\mathbf{Mon}}
\nc{\rbtm}{{shuffle }} \nc{\rbto}{{Rota-Baxter }} \nc{\remarks}{\noindent{\bf Remarks: }} \nc{\Rings}{\mathbf{Rings}} \nc{\Sets}{\mathbf{Sets}}
\nc{\balpha}{\mathbf{\alpha}}
\nc{\BA}{{\mathbb A}} \nc{\CC}{{\mathbb C}} \nc{\DD}{{\mathbb D}} \nc{\EE}{{\mathbb E}} \nc{\FF}{{\mathbb F}} \nc{\GG}{{\mathbb G}} \nc{\HH}{{\mathbb H}} \nc{\LL}{{\mathbb L}} \nc{\NN}{{\mathbb N}} \nc{\KK}{{\mathbb K}} \nc{\PP}{{\mathbb P}} \nc{\QQ}{{\mathbb Q}} \nc{\RR}{{\mathbb R}} \nc{\TT}{{\mathbb T}} \nc{\VV}{{\mathbb V}} \nc{\ZZ}{{\mathbb Z}}
\nc{\cala}{{\mathcal A}} \nc{\calc}{{\mathcal C}} \nc{\cald}{{\mathcal D}} \nc{\cale}{{\mathcal E}} \nc{\calf}{{\mathcal F}} \nc{\calg}{{\mathcal G}} \nc{\calh}{{\mathcal H}} \nc{\cali}{{\mathcal I}} \nc{\call}{{\mathcal L}} \nc{\calm}{{\mathcal M}} \nc{\caln}{{\mathcal N}} \nc{\calo}{{\mathcal O}} \nc{\calp}{{\mathcal P}} \nc{\calr}{{\mathcal R}} \nc{\cals}{{\mathcal S}} \nc{\calt}{{\mathcal T}} \nc{\calw}{{\mathcal W}} \nc{\calk}{{\mathcal K}} \nc{\calx}{{\mathcal X}}
\nc{\calz}{{\mathcal Z}}
\nc{\fraka}{{\mathfrak a}} \nc{\frakA}{{\mathfrak A}} \nc{\frakb}{{\mathfrak b}} \nc{\frakB}{{\mathfrak B}}
\nc{\frakc}{{\mathfrak c}}  \nc{\frakD}{{\mathfrak D}}
\nc{\frakH}{{\mathfrak H}}
\nc{\frakh}{{\mathfrak h}} \nc{\frakM}{{\mathfrak M}}
\nc{\frakO}{{\mathfrak O}}
\nc{\frakE}{{\mathfrak E}}
\nc{\bfrakM}{\overline{\frakM}} \nc{\frakm}{{\mathfrak m}} \nc{\frakP}{{\mathfrak P}} \nc{\frakN}{{\mathfrak N}} \nc{\frakp}{{\mathfrak p}} \nc{\frakS}{{\mathfrak S}}
\nc{\frakk}{{\mathfrak k}}
\nc{\frakx}{{\mathfrak x}}
\nc{\frakl}{{\mathfrak l}} \nc{\ox}{\bar{\frakx}} \nc{\frakX}{{\mathfrak X}} \nc{\fraky}{{\mathfrak y}} \nc\dop{\delta}
\nc{\Reduce}{{\rm Red}}
\font\cyr=wncyr10 \font\cyrs=wncyr7
\nc{\redt}[1]{\textcolor{red}{#1}}
\nc{\li}[1]{\textcolor{red}{#1}}
\nc{\lir}[1]{\textcolor{red}{Li:#1}}
\nc{\ap}[1]{\textcolor{blue}{#1}}
\nc{\apr}[1]{\textcolor{blue}{AP:#1}}
\nc{\mrep}{\mathrm{Id}}
\nc{\Fix}{\mathrm{Fix}}
\nc{\wvec}[2]{{\scriptsize{\Big [ \!\!\begin{array}{c} #1 \\ #2 \end{array} \!\! \Big ]}}}
\nc{\bwvec}[2]{\Big(\wvec{#1}{#2}\Big)}
\nc{\jwvec}[2]{{\scriptsize{\Big [ \!\!\begin{array}{cccccccccccccc} #1 \\ #2 \end{array} \!\! \Big ]}}}
\nc{\bjwvec}[2]{\Big(\jwvec{#1}{#2}\Big)}
\nc{\diffc}{difference operator\xspace}
\nc{\Diffc}{Difference operator\xspace}
\nc{\diffcs}{difference operators\xspace}
\nc{\RBO}{\mathrm{RBO}}
\nc{\DW}{\mathrm{DW}}
\begin{document}
	
\title[Difference operators on lattices]{Difference operators on lattices}

\author{Aiping Gan}
\address{School of Mathematics and Statistics,
Jiangxi Normal University, Nanchang, Jiangxi 330022, P.R. China}
\email{ganaiping78@163.com}

\author{Li Guo}
\address{Department of Mathematics and Computer Science, Rutgers University, Newark, NJ 07102, USA}
\email{liguo@rutgers.edu}
	
\hyphenpenalty=8000
	
\date{\today}
	
\begin{abstract}
A differential operator of weight $\lambda$ is the algebraic abstraction of the difference quotient $d_\lambda(f)(x):=\big(f(x+\lambda)-f(x)\big)/\lambda$, including both the derivation as   $\lambda$ approaches to $0$  
and the difference operator when $\lambda=1$. Correspondingly, differential algebra of weight $\lambda$ extends the well-established theories of differential algebra and difference algebra.	
In this paper, we initiate the study of differential operators with weights, in particular difference operators,  on lattices. We show that differential operators of weight $-1$ on a lattice coincide with differential operators, while differential operators are special cases of \diffcs. Distributivity of a lattice is characterized by the existence of certain \diffcs. 
Furthermore, we characterize and enumerate \diffcs on finite chains and finite  quasi-antichains. 
\end{abstract}
	
\subjclass[2010]{
06B20,	
06D05,	
12H10,	
13N15,	
05A15,	
39A70	
}
	
\keywords{lattice; derivation; difference lattice; distributive lattice; Catalan number; chain; quasi-antichain\\
Corresponding author: Li Guo (liguo@rutgers.edu)}
	
\maketitle

\vspace{-1cm}
\tableofcontents

\vspace{-1cm}

\hyphenpenalty=8000 \setcounter{section}{0}
	
	
\allowdisplaybreaks

\section {Introduction}	
\mlabel{sec:intr}

This paper studies the notion of differential operators with weights, in particular difference operators, on lattices. We show that these operators are closely related to common properties of lattices and provide their classifications on chains and quasi-antichains. 

\subsection{Derivations and lattices}
The derivation, or differential operator, and integral operator are fundamental in analysis and its broad applications. As an abstraction of the derivation, the notion of a differential algebra was introduced in the 1930's by Ritt~\mcite{Ri}, to be a field $A$ carrying a linear operator $d$ satisfying an abstract of the Leibniz rule of the derivation:
$$d(uv)=d(u) v+ud(v) \quad \text{ for all } u, v\in A.$$
Thus $d$ is still called a derivation or a differential operator. 
The theory of differential algebra for fields and more generally for commutative algebras has since been expanded into a major area of mathematical research including differential Galois theory, differential algebraic geometry and differential algebraic groups, with broad applications in arithmetic geometry, logic and computational algebra~\mcite{CGKS,Ko,SP}.

Furthermore, in connection with combinatorics, differential structures were found on heap ordered trees~\mcite{GrL} and on decorated rooted trees~\mcite{GK3}.
The operad of differential associative algebras was studied in~\mcite{Lod}. 
Moreover, derivations on commutative algebras gave rise to the structure of Poisson algebras, perm algebras and Novikov algebras~\mcite{BV1,CL,GD}. 

As another major algebraic structure with broad applications, lattice theory~\mcite{Bir,Bly,Davey} has been developed in close connection with universal algebra~\mcite{BS,Gr2}. The notion of derivations on lattices was introduced by Szasz~\mcite{Sz} and further developed  by Ferrari \mcite{fer}, among others. In their language, a derivation on a lattice $(L,\vee,\wedge)$ is a map $d:L\to L$ satisfying
\begin{equation}
d(x\vee y)=d(x)\vee d(y), \quad d(x\wedge y)=(d(x)\wedge y)\vee (x\wedge d(y)) \tforall x, y\in L.
\mlabel{eq:00}
\end{equation}

The notion of derivations  without requiring the first condition was 
investigated by Xin and coauthors
\mcite{Xin1,Xin2}. This study was continued in~\mcite{GG} from the viewpoint of universal algebra, and integral operators on lattice was studied in \mcite{GGW}.
There are also studies on generalizations of derivations on lattices, such as generalized derivations \mcite{als2}, higher derivations \mcite{Ce1},
$n$-derivations and $(n, m)$-derivations \mcite{Ce2}, $f$-derivations~\mcite{CO}.

\subsection{Difference operators and lattices}
As a discrete analogy of the derivation and a fundamental notion in numerical analysis, the difference operator is defined by sending a function $f(x)$ to the difference $f(x+1)-f(x)$. More generally, for any nonzero real number $\lambda$, define the difference quotient operator $$d_{\lambda}(f)(x):=\frac{f(x+\lambda)-f(x)}{\lambda}.$$
Then $d_\lambda$ satisfies the operator identity
\begin{equation}\mlabel{eq:diffw}
d_\lambda(fg)=d_\lambda(f)g+fd_\lambda(g)+\lambda d_\lambda(f)d_\lambda(g)  \quad \tforall \text{functions} ~f, g.
\end{equation}

Motivated by this property, the notion of a {\bf differential operator of weight $\lambda$} was introduced in~\mcite{GK3}, as a linear operator $d:R\to R$ on any associative algebra $R$ that satisfies Eq.~\meqref{eq:diffw1}:
\begin{equation}\mlabel{eq:diffw1}
	d(fg)=d(f)g+fd(g)+\lambda d(f)d(g)  \quad \tforall f, g\in R.
\end{equation}
The special case when $\lambda=1$ generalizes the usual difference operator;  
while when $\lambda=-1$, we obtain the backward difference operator 
$$ d_{-1}(f)(x):=f(x-1)-f(x)$$
up to a sign. Of course, when $\lambda$ is taken to be zero, we recover the usual derivation. 

These operators have been considered for other algebraic structures with a linear structure, such as Lie algebras~\mcite{LGG}. For an algebraic structure like lattice that does not have a linear structure, the operators identities in the three special cases, when $\lambda=0, 1, -1$, can still be defined, motivating us to give the following definitions. See~\mcite{GLS} for the notion of differential operators on groups.

\begin{definition}
	Let $L$ be a lattice. A map $d:L\to L$ is called 
\begin{enumerate}
\item a \name {derivation}~\mcite{Xin1} or a \name {differential operator (of weight $0$)} if $d$ satisfies
\begin{equation}
	d(x\wedge y)=(d(x)\wedge y)\vee (x\wedge d(y)) \quad \tforall ~ x, y\in L;
	\mlabel{eq:der}
\end{equation}
\item a \name {differential operator of weight $1$}  or a \name {difference operator} if $d$ satisfies 
	\begin{equation}
	d(x\wedge y)=(d(x)\wedge y)\vee (x\wedge d(y))\vee (d(x)\wedge d(y)) \tforall x, y\in L;
	\mlabel{eq:301}
\end{equation}
\item a \name{differential operator of weight $-1$} if $d$ satisfies 
\begin{equation}
	d(x\wedge y)\vee (d(x)\wedge d(y))=(d(x)\wedge y)\vee (x\wedge d(y)) \tforall x, y\in L.
	\mlabel{eq:30}
\end{equation}
\end{enumerate}
\mlabel{d:31}
\mlabel{d:311}
\end{definition}

Adapting the classical terminology, we  call a map $f:L\to L$ an \name{operator} even though there is no linearity imposed. 

As noted above, so far the related study in lattice theory has been focused on the derivations, that is, differential operator of weight zero. Due to the importance of differential operators with weights and in particular difference operators, we initiate the study of these operators on lattices in this paper. 
An unexpected property of these operators on lattices is that 
differential operators of weight $-1$ coincide with differential operators (of weight $0$). Furthermore, differential operators are precisely the decreasing difference operators. Enumerations of difference operators are linked to Catalan numbers. 

\subsection{Layout of the paper} 
The paper is organized as follows. 

In Section~\mref{sec:gen}, we first show that differential operators of weight $-1$  coincide with derivations on a given lattice (Theorem \mref{theo:30}).
We then focus on \diffcs in the rest of the paper. There are plenty of supply of \diffcs. Derivations are just decreasing \diffcs  (Proposition \mref{pro:301}). Further, there are several families of \diffcs and new \diffcs from existing ones (Propositions \mref{por:111} and \mref{por:113}).
Also, distributive lattices are characterized by the existence of a family of \diffcs (Theorem \mref{the:001}). 

In Section~\mref{sec:chain}, we classify and enumerate \diffcs on finite chains. The numbers are related to Catalan numbers (Theorem \mref{the:400}).
 
In Section~\mref{sec:qua}, we classify and enumerate \diffcs on quasi-antichains by dividing the discussion into several cases.

\section{Differential operators with weights and difference operators}
\mlabel{sec:gen}
In this section, we study general properties of differential operators of weight $-1$ or $1$ on a lattice. We show that there is a large supply of \diffcs some of which characterize the distributivity of lattices. 

\subsection{Differential operators of weight $-1$ and derivations}
\mlabel{sec:dow}

Let $\DW_\lambda(L)$ denote the set of differential operators of weight $\lambda\in \{0,1,-1\}$ on a lattice $L$. The set of differential operators of weight zero on $L$ is also denoted by $\DO(L)$ in~\mcite{GG}. 
We recall the following result for later applications.
\begin{lemma}
	\name{\mcite{Xin1}} Let $L$ be a lattice and $d\in \DO(L)$. Then the following statements hold.
	\begin{enumerate}
		\item $d$ is decreasing, i.e.,
		$d(x)\leq x$ for all $x\in L$. In particular, $d(0)=0$ if $L$ has a bottom element $0$.
		\mlabel{it:2011}
		\item $d(x)\wedge d(y)\leq x\wedge d(y)\leq d(x\wedge y)$ for all $x, y\in L$.
		\mlabel{it:2012}
		\item If $x\in L$ such that $x\leq d(u)$ for some $u\in L$, then $d(x)=x$.
		\mlabel{it:2013}
		\item If $L$ has a top element $1$ and $d(1)=1$, then $d=\mrep_{L}$, the identity map on $L$.
		\mlabel{it:2014}
		\item $d$ is  idempotent, i.e, $d^{2}=d$.
		\mlabel{it:2015}
	\end{enumerate}
	\mlabel{pro:201}
\end{lemma}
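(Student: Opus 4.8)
The plan is to derive all five statements directly from the defining identity \meqref{eq:der}, proceeding in the given order since each later item feeds on the earlier ones. First I would prove (i) by the standard trick of substituting $y=x$ into \meqref{eq:der}: the right-hand side collapses to $d(x)\wedge x$, so $d(x)=d(x)\wedge x$, which is exactly $d(x)\leq x$. Specializing to a bottom element $0$ (when present) and combining $d(0)\leq 0$ with the trivial $0\leq d(0)$ yields $d(0)=0$.

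Next, (ii) is almost immediate once (i) is in hand. The left inequality follows because $d(x)\leq x$ gives $d(x)\wedge d(y)\leq x\wedge d(y)$, and the right inequality merely records that $x\wedge d(y)$ is one of the two joinands appearing on the right-hand side of \meqref{eq:der}, hence is dominated by $d(x\wedge y)$. The key step — and the one I expect to carry the most weight — is (iii). Here the idea is to exploit the hypothesis $x\leq d(u)$ in two ways: together with (i) it gives $x\leq d(u)\leq u$, so that $x\wedge u=x$, while directly it gives $x\wedge d(u)=x$. Writing $d(x)=d(x\wedge u)$ and expanding via \meqref{eq:der} then produces $d(x)=(d(x)\wedge u)\vee x\geq x$; combining this with $d(x)\leq x$ from (i) forces $d(x)=x$. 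The only subtlety is noticing that the single inequality $x\leq d(u)$ simultaneously controls both the meet $x\wedge u$ and the meet $x\wedge d(u)$, which is what makes the expansion collapse.

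Finally, (iv) and (v) should fall out as corollaries of (iii). For (iv), every $x\in L$ satisfies $x\leq 1=d(1)$, so applying (iii) with $u=1$ gives $d(x)=x$ for all $x$, i.e. $d=\mrep_{L}$. For (v), I would apply (iii) to the element $d(x)$ with the choice $u=x$: since trivially $d(x)\leq d(x)=d(u)$, statement (iii) yields $d(d(x))=d(x)$, which is precisely $d^{2}=d$. No step requires any lattice hypothesis beyond what is assumed, and the whole argument rests only on \meqref{eq:der} and the monotonicity established in (i).
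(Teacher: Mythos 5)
Your proof is correct: each of the five items follows exactly as you describe, with the substitution $y=x$ giving (i), the joinand observation giving (ii), the double use of $x\leq d(u)$ giving (iii), and (iv), (v) falling out of (iii) with $u=1$ and $u=x$ respectively. The paper itself states this lemma as a recalled result from the cited reference and gives no proof, so there is nothing to compare against; your argument is the standard one and is a valid, self-contained derivation from Eq.~\meqref{eq:der}.
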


Next, we will show that
$\DW_{-1}(L)$ coincides with $\DO(L)$. The case when of $\DW_1(L)$ will be studied in the later sections. 

\begin{lemma}
 Let $L$ be a  lattice and $d\in \DW_{-1}(L)$. Then $d$ is decreasing.
\mlabel{lm:301}
\end{lemma}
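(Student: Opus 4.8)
The plan is to test the defining identity \meqref{eq:30} of a differential operator of weight $-1$ on the diagonal, that is, to substitute $y=x$. This single substitution should already force $d$ to be decreasing, so no induction, distributivity, or appeal to Lemma~\mref{pro:201} is needed.

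Concretely, setting $y=x$ in \meqref{eq:30}, I would simplify each side separately. On the left-hand side, $x\wedge y = x$ gives $d(x\wedge y)=d(x)$, while $d(x)\wedge d(y)=d(x)\wedge d(x)=d(x)$ by idempotency of $\wedge$; hence the left-hand side collapses to $d(x)\vee d(x)=d(x)$. On the right-hand side, commutativity of $\wedge$ identifies the two terms, $(d(x)\wedge y)\vee(x\wedge d(y))=(d(x)\wedge x)\vee(d(x)\wedge x)=d(x)\wedge x$. Equating the two simplified sides yields
\[
d(x)=d(x)\wedge x \tforall x\in L,
\]
which is precisely the assertion $d(x)\leq x$, since in any lattice $a\leq b$ is equivalent to $a\wedge b=a$. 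As $x$ is arbitrary, $d$ is decreasing.

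The main (indeed essentially the only) point to verify is that the lattice identities invoked, namely idempotency and commutativity of $\wedge$ and $\vee$ together with the characterization $a=a\wedge x \Leftrightarrow a\leq x$, are applied correctly; there is no structural obstacle and no hidden hypothesis on $L$. This clean diagonal collapse is what ultimately makes $\DW_{-1}(L)$ coincide with $\DO(L)$, in contrast to the genuinely larger class $\DW_1(L)$ treated in the later sections, where the analogous substitution does not produce such a simplification.
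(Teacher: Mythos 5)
Your proposal is correct and is essentially identical to the paper's own proof: both substitute $y=x$ into Eq.~\meqref{eq:30}, collapse the left side to $d(x)$ and the right side to $d(x)\wedge x$ using idempotency and commutativity of $\wedge$ and $\vee$, and conclude $d(x)\leq x$. No further comment is needed.
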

\begin{proof}
Assume that $L$ is a lattice and $d\in \DW_{-1}(L)$. Let $x\in L$. Then
$$d(x)=d(x\wedge x)\vee (d(x)\wedge d(x))=(d(x)\wedge x)\vee (x\wedge d(x))=d(x)\wedge x$$ by  Eq.\meqref{eq:30}, and so  $d(x)\leq x$. Thus $d$ is decreasing.
\end{proof}

\begin{theorem}
Let  $L$ be a  lattice. Then $\DO(L)= \DW_{-1}(L)$.
\mlabel{theo:30}
\end{theorem}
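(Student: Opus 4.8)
The plan is to prove the equality of sets by establishing the two inclusions $\DO(L)\subseteq \DW_{-1}(L)$ and $\DW_{-1}(L)\subseteq \DO(L)$ separately. The unifying observation is that in either case the operator is decreasing, which forces the extra meet term $d(x)\wedge d(y)$ occurring in Eq.~\eqref{eq:30} to be absorbed; everything then reduces to comparing $d(x\wedge y)$ with $(d(x)\wedge y)\vee (x\wedge d(y))$.

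For the inclusion $\DO(L)\subseteq \DW_{-1}(L)$, I would start from a derivation $d$ satisfying Eq.~\eqref{eq:der}, so that the right-hand side of Eq.~\eqref{eq:30}, namely $(d(x)\wedge y)\vee (x\wedge d(y))$, already equals $d(x\wedge y)$. It then suffices to check that adjoining $d(x)\wedge d(y)$ to $d(x\wedge y)$ changes nothing, i.e.\ that $d(x)\wedge d(y)\leq d(x\wedge y)$. This is exactly Lemma~\ref{pro:201}\ref{it:2012}, so this inclusion is immediate.

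For the reverse inclusion, I would take $d\in \DW_{-1}(L)$ and invoke Lemma~\ref{lm:301} to get that $d$ is decreasing. Since $d(x)\leq x$ and $d(y)\leq y$, the term $d(x)\wedge d(y)$ lies below both $d(x)\wedge y$ and $x\wedge d(y)$, hence below the right-hand side of Eq.~\eqref{eq:30}; feeding this back into Eq.~\eqref{eq:30} gives the inequality $d(x\wedge y)\leq (d(x)\wedge y)\vee (x\wedge d(y))$ for free. The substantive point is the reverse inequality, and the key device is a self-referential substitution in Eq.~\eqref{eq:30}. Replacing $y$ by $x\wedge y$ and using that $d$ is decreasing (so that $d(x)\wedge (x\wedge y)=d(x)\wedge y$, that $x\wedge d(x\wedge y)=d(x\wedge y)$, and that $d(x)\wedge d(x\wedge y)\leq d(x\wedge y)$) collapses Eq.~\eqref{eq:30} to $d(x\wedge y)=(d(x)\wedge y)\vee d(x\wedge y)$, whence $d(x)\wedge y\leq d(x\wedge y)$. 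The symmetric substitution, replacing $x$ by $x\wedge y$, yields $x\wedge d(y)\leq d(x\wedge y)$. Together these give $(d(x)\wedge y)\vee (x\wedge d(y))\leq d(x\wedge y)$, and combining with the first inequality recovers Eq.~\eqref{eq:der}, so $d\in\DO(L)$.

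I expect the reverse inclusion to be the main obstacle: the crux is realizing that the substitution $y\mapsto x\wedge y$ (and its mirror $x\mapsto x\wedge y$), in the presence of the decreasing property, is precisely what pries apart the two meet terms on the right-hand side so that each can be bounded by $d(x\wedge y)$ on its own. The remaining work is careful absorption bookkeeping in the lattice, but no deeper structural hypothesis (such as distributivity or modularity) is required.
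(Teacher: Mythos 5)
Your proof is correct and follows essentially the same route as the paper: both inclusions rest on Lemma~\ref{pro:201}\ref{it:2012} for the easy direction and on Lemma~\ref{lm:301} plus the self-referential substitution of $x\wedge y$ into Eq.~\eqref{eq:30} for the converse. The only cosmetic difference is that you perform the substitution twice symmetrically to bound both terms $d(x)\wedge y$ and $x\wedge d(y)$ separately, whereas the paper substitutes only once (for $x\wedge d(y)\leq d(x\wedge y)$), deduces $d(x)\wedge d(y)\leq d(x\wedge y)$, and then reads off Eq.~\eqref{eq:der} directly from the original instance of Eq.~\eqref{eq:30}.
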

\begin{proof}
Let  $d\in \DO(L)$. Then for all $x, y\in L$,   Lemma \mref{pro:201} gives
$d(x)\wedge d(y)\leq d(x)\wedge y\leq d(x\wedge y),$
and so
	$d(x\wedge y)\vee (d(x)\wedge d(y))=d(x\wedge y)=(d(x)\wedge y)\vee (x\wedge d(y))$
by Eq.\meqref{eq:der}. Thus $d\in\DW_{-1}(L)$, and hence
	$\DO(L)\subseteq \DW_{-1}(L)$.
	
To prove the opposite inclusion, let  $d\in \DW_{-1}(L)$ and  $x, y\in L$. Then we have
$$d(x\wedge y)=d((x\wedge y)\wedge y)\vee (d(x\wedge y)\wedge d(y))= (d(x\wedge y)\wedge y)\vee ((x\wedge y)\wedge d(y))$$
by  Eq.\meqref{eq:30}, which implies that
$d(x\wedge y)
=d(x\wedge y)\vee (x\wedge d(y))$
since  $d(y)\leq y$ and $d(x\wedge y)\leq x\wedge y\leq y$ by Lemma \mref{lm:301}, and so
 $x\wedge d(y)\leq d(x\wedge y)$.   Since $d(x)\leq  x$ by Lemma \mref{lm:301}, we get $d(x)\wedge d(y)\leq x\wedge d(y)\leq d(x\wedge y)$.Thus
$$d(x\wedge y)=d(x\wedge y)\vee (d(x)\wedge d(y))=(d(x)\wedge y)\vee (x\wedge d(y))$$ 
by Eq.\meqref{eq:30}, and hence
 $d\in\DO(L)$. This shows that
$ \DW_{-1}(L)\subseteq \DO(L)$, proving the theorem. 
\end{proof}

\subsection{Properties and first examples of \diffcs}
\mlabel{sec:dow1}

In this subsection, we give basic properties and  examples of \diffcs on  lattices. 
We first make a useful simple observation.

\begin{lemma}
	For any operator $d$ on
	a lattice $L$, the difference operator relation in Eq.~\meqref{eq:301} holds when $x=y$. 
	\mlabel{lem:x=y}
\end{lemma}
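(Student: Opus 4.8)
The plan is simply to substitute $x=y$ into Eq.~\meqref{eq:301} and reduce both sides using only the defining identities of a lattice (idempotency, commutativity, and absorption), making no appeal to any property of $d$ beyond its being a well-defined self-map of $L$. First I would compute the left-hand side: since $x\wedge x=x$ by idempotency of the meet, the left-hand side $d(x\wedge x)$ equals $d(x)$.

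Next I would simplify the right-hand side $(d(x)\wedge x)\vee (x\wedge d(x))\vee (d(x)\wedge d(x))$. By commutativity of $\wedge$ the first two joinands coincide, namely $d(x)\wedge x = x\wedge d(x)$, and by idempotency the third joinand collapses, $d(x)\wedge d(x)=d(x)$. Thus the right-hand side reduces to $(d(x)\wedge x)\vee d(x)$. Finally I would invoke the absorption law: since $d(x)\wedge x\leq d(x)$, joining this term with $d(x)$ returns $d(x)$ itself, so the right-hand side equals $d(x)$, which matches the left-hand side. This closes the verification.

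There is no genuine obstacle here, as the statement is an elementary consequence of the lattice axioms. The only point worth flagging is the one that gives the lemma its later usefulness: the argument never uses that $d$ is decreasing, idempotent, or a differential operator of any weight, so the relation at $x=y$ is automatic for \emph{every} operator $d$. Keeping the reduction purely lattice-theoretic is precisely what makes this a valid observation about arbitrary maps, to be applied freely in the subsequent sections.
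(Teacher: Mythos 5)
Your proposal is correct and matches the paper's argument, which simply observes that both sides of Eq.~\meqref{eq:301} equal $d(x)$ when $x=y$; you merely spell out the idempotency and absorption steps that the paper leaves implicit. Nothing further is needed.
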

\begin{proof}
The lemma is clear since both sides of
Eq.~\meqref{eq:301}
is $d(x)$ when $x=y$. 
\end{proof}

\begin{example}
	\begin{enumerate}
		\item Let $L$ be a lattice and $a\in L$. Define $\textbf{C}_{(a)}: L\to L$ by $\textbf{C}_{(a)}(x):=a$ for all $x\in L$. It is easy to check that
		$\textbf{C}_{(a)}\in \DW_{1}(L)$.
		$\textbf{C}_{(a)}$ is called  the \name{constant   operator  with value $a$}.
		\mlabel{it:3011e}
		\item Let $L$ be a lattice with a top element $1$ and $|L|\geq 2$.  For each $a\in L$, define 
		$\tau^{(a)}: L\to L$ by 
		$$
		\tau^{(a)}(x):=
		\begin{cases}
			a,  & \textrm{if}~ x=1; \\
			1,  & \textrm{otherwise}.
		\end{cases}
		$$	
	Then	$\tau^{(a)}\in \DW_{1}(L)$
		(a detailed proof is given in Proposition \mref{por:111}). 
		\mlabel{it:3012e}
	\end{enumerate}
	\mlabel{exa:301}
\end{example}

Here are some basic properties. 
\begin{proposition}	\mlabel{pro:3000}
Let $L$ be a lattice with a top element $1$ and $d\in \DW_{1}(L)$. Then the following statements hold.
	\begin{enumerate}
		\item If $x\in L$ such that  $x\leq d(1)$, then
		$x\leq d(x)$. In particular,
			$d(1)\leq d^{2}(1)$.
		\mlabel{it:3001}
		
		\item If $x, y\in L$ such that $x\leq y\leq d(1)$, then $d(x)\leq d(y)$.
	\mlabel{it:3002}
		\item If  $d(1)=1$, then $d$ is  increasing (i.e.,
		$x\leq d(x)$ for all $x\in L$) and isotone. 
		\mlabel{it:3003}
		\item If $x\in L$ and  $ d(1)\leq x$, then
		$d(1)\leq d(x)$.
\mlabel{it:3004}
		 \item If $L$ has a bottom  element $0$ and $x\in L$ such that $x\vee d(x)\leq d(0)$, then  $x\vee d(x)= d(0)$.
		In particular, if $d(0)=1$, then  $x\vee d(x)=1$.
		\mlabel{it:3005}
	 \item 
	 If $d(a)=1$ for some $a\in L\backslash \{1\}$, then $x\vee d(x)=1$ for all  $x\in L$ with $a\leq x$. 	\mlabel{it:3006}
	\end{enumerate}
\end{proposition}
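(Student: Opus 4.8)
The plan is to derive all six statements from a single specialization of the defining identity Eq.~\meqref{eq:301}. Setting $y=1$ and using that $1$ is the top element (so $d(x)\wedge 1=d(x)$ and $x\wedge 1=x$), Eq.~\meqref{eq:301} collapses to
\begin{equation*}
d(x)=d(x)\vee (x\wedge d(1))\vee (d(x)\wedge d(1)) \quad\text{for all } x\in L,
\end{equation*}
which, after absorbing $d(x)\wedge d(1)\leq d(x)$, is equivalent to the inequality $x\wedge d(1)\leq d(x)$. This inequality is the engine for parts (i)--(iv). For part (i), if $x\leq d(1)$ then $x\wedge d(1)=x$, giving $x\leq d(x)$ at once; taking $x=d(1)$ then yields $d(1)\leq d^2(1)$. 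For part (iv), if $d(1)\leq x$ then $x\wedge d(1)=d(1)$, giving $d(1)\leq d(x)$.

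For part (ii) I would start from $d(x)=d(x\wedge y)$ (since $x\leq y$) and expand the right side by Eq.~\meqref{eq:301}, obtaining $d(x)=(d(x)\wedge y)\vee (x\wedge d(y))\vee (d(x)\wedge d(y))$. The two latter joinands are clearly $\leq d(y)$, so the point is to bound the stray joinand $d(x)\wedge y$. Since $y\leq d(1)$, part (i) applied to $y$ gives $y\leq d(y)$, whence $d(x)\wedge y\leq d(x)\wedge d(y)\leq d(y)$; thus all three joinands lie below $d(y)$ and $d(x)\leq d(y)$. Part (iii) is then immediate: if $d(1)=1$ then every $x$ satisfies $x\leq d(1)$, so (i) makes $d$ increasing, and any $x\leq y$ satisfies $x\leq y\leq d(1)$, so (ii) makes $d$ isotone.

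Parts (v) and (vi) use the opposite specializations. For (v) I would set $y=0$ in Eq.~\meqref{eq:301}; since $d(x)\wedge 0=0$ this reduces to $d(0)=(x\wedge d(0))\vee (d(x)\wedge d(0))$. The hypothesis $x\vee d(x)\leq d(0)$ means both $x\leq d(0)$ and $d(x)\leq d(0)$, so the right side simplifies to $x\vee d(x)$, giving the desired equality $d(0)=x\vee d(x)$; the ``in particular'' follows by taking $d(0)=1$, which makes the hypothesis automatic since $1$ is the top element. For (vi), with $a\leq x$ I would write $a=a\wedge x$ and expand $d(a)=d(a\wedge x)$ by Eq.~\meqref{eq:301}; substituting $d(a)=1$ turns $d(a)\wedge x$ into $x$ and $d(a)\wedge d(x)$ into $d(x)$, so $1=x\vee (a\wedge d(x))\vee d(x)$, and absorbing $a\wedge d(x)\leq d(x)$ leaves $1=x\vee d(x)$.

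I expect no serious obstacle, since every part reduces to a one-line specialization of Eq.~\meqref{eq:301} followed by lattice absorption. The only step requiring a little care is part (ii), where the bound on the stray joinand $d(x)\wedge y$ is not automatic and must be routed through part (i); I would therefore prove (i) first and invoke it inside (ii). Throughout, the recurring subtlety is merely tracking which joinands get absorbed, so I would keep the identities $1\wedge z=z$, $0\wedge z=0$, and $u\wedge v\leq v$ explicit at each use.
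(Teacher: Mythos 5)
Your proposal is correct and follows essentially the same route as the paper: each part is obtained by specializing Eq.~\meqref{eq:301} at $y=1$, $y=0$, or $y=a$ and then absorbing joinands, with part (i) (equivalently the inequality $x\wedge d(1)\leq d(x)$) feeding into part (ii) exactly as in the paper's argument. The only cosmetic difference is that in (ii) you bound each joinand by $d(y)$ directly rather than showing the join equals $d(x)\wedge d(y)$; the content is identical.
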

\begin{proof}
Assume that $L$ is a lattice with a top element $1$ and let $d\in \DW_{1}(L)$.
	
	\mnoindent
\mref{it:3001}	If 	 $x\in L$ and  $x\leq d(1)$, then
$x=x\wedge d(1)\leq d(x\wedge 1)=d(x)$
 by Eq. \meqref{eq:301}.

	\mnoindent
\mref{it:3002} If $x, y\in L$ such that $x\leq y\leq d(1)$, then
$x\leq d(x)$ and $y\leq d(y)$ by Item \mref{it:3001}, which implies that 
$d(x)=	d(x\wedge y)=(d(x)\wedge y)\vee (x\wedge d(y))\vee (d(x)\wedge d(y))=d(x)\wedge d(y)$ by 
Eq.\meqref{eq:301}. Thus
 $d(x)\leq d(y)$.
 
 	\mnoindent
 \mref{it:3003} follows immediately from Items
 \mref{it:3001} and \mref{it:3002}.

	\mnoindent
\mref{it:3004}
	If $x\in L$ and $d(1)\leq x$,  then $d(1)=x\wedge d(1)\leq d(x\wedge 1)=d(x)$
by Eq.\meqref{eq:301}.

	\mnoindent
\mref{it:3005}	If 	 $L$ has a bottom  element $0$ and $x\in L$ such that $x\vee d(x)\leq d(0)$,  then $x\leq d(0)$ and $d(x)\leq d(0)$, so
$d(0)=	d(x\wedge 0)=(d(x)\wedge 0)\vee (x\wedge d(0))\vee (d(x)\wedge d(0)) =x\vee d(x)$ by 
Eq.\meqref{eq:301}.

	\mnoindent
\mref{it:3006}		If $d(a)=1$ for some $a\in L\backslash \{1\}$, then for all	 $x\in L$ with $a\leq x$, we have
$1=d(a)=	d(x\wedge a)=(d(x)\wedge a)\vee (x\wedge d(a))\vee (d(x)\wedge d(a))=x\vee d(x)$ by
Eq.\meqref{eq:301}.
\end{proof}

There is a close relationship between differential operators and \diffcs on lattices.
\begin{proposition}
	 Let  $d$ be an operator on a lattice $L$. Then the following statements hold.
\begin{enumerate}
	\item  $\DO(L)=\{d\in \DW_{1}(L) ~| ~d$ is decreasing$ \}$.
		\mlabel{it:3011}
	\item If $d$ is increasing, then $d\in \DW_{1}(L)$ if and only if
$d$ is a $\wedge$-homomorphism, i.e.,	
	 $d(x\wedge y)=d(x)\wedge d(y)$ for all $x, y\in L$.
		\mlabel{it:3012}
\end{enumerate}	 
	 	\mlabel{pro:301}
\end{proposition}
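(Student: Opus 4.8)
The plan is to reduce both parts to a single observation about how the extra meet term $d(x)\wedge d(y)$ in Eq.~\meqref{eq:301} interacts with the monotonicity of $d$, so that the weight-$1$ identity collapses either to the derivation identity Eq.~\meqref{eq:der} (in the decreasing case) or to the $\wedge$-homomorphism condition (in the increasing case). Throughout I would use only the elementary absorption rule that $a\vee b=b$ whenever $a\leq b$ in a lattice, together with the monotonicity of $\wedge$.

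For part \mref{it:3011}, first I would establish the inclusion $\DO(L)\subseteq\{d\in\DW_1(L)\mid d\text{ decreasing}\}$. By Lemma~\mref{pro:201}\mref{it:2011}, any $d\in\DO(L)$ is automatically decreasing, so $d(x)\leq x$ for all $x$; monotonicity of $\wedge$ then gives $d(x)\wedge d(y)\leq x\wedge d(y)$, whence the third join-term on the right-hand side of Eq.~\meqref{eq:301} is absorbed and that right-hand side reduces to $(d(x)\wedge y)\vee(x\wedge d(y))$, which equals $d(x\wedge y)$ by Eq.~\meqref{eq:der}. Thus $d\in\DW_1(L)$. For the reverse inclusion I would take $d\in\DW_1(L)$ with $d$ decreasing and run exactly the same absorption step: since $d(x)\wedge d(y)\leq x\wedge d(y)$, the right-hand side of Eq.~\meqref{eq:301} again collapses to $(d(x)\wedge y)\vee(x\wedge d(y))$, so Eq.~\meqref{eq:der} holds and $d\in\DO(L)$.

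For part \mref{it:3012}, I would fix $d$ increasing, so $x\leq d(x)$ and $y\leq d(y)$ for all $x,y$. The key comparison now runs the other way: $d(x)\wedge y\leq d(x)\wedge d(y)$ and $x\wedge d(y)\leq d(x)\wedge d(y)$, so the first two join-terms on the right-hand side of Eq.~\meqref{eq:301} are absorbed into the third, and the entire right-hand side equals $d(x)\wedge d(y)$. Hence Eq.~\meqref{eq:301} is equivalent to the single equation $d(x\wedge y)=d(x)\wedge d(y)$ for all $x,y\in L$, which is precisely the assertion that $d$ is a $\wedge$-homomorphism; reading this equivalence in both directions yields the claim.

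There is no genuinely hard step here: both parts rest on the same absorption identity, applied once to eliminate the weight term (decreasing case) and once to eliminate the two cross terms (increasing case). The only point requiring care is to invoke Lemma~\mref{pro:201}\mref{it:2011} for the ``decreasing'' half of part \mref{it:3011}, since there the decreasing property is not an added hypothesis but a consequence of being a derivation; everywhere else the monotonicity of $d$ is supplied directly by hypothesis and feeds straight into the meet-comparisons above.
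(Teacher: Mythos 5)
Your proof is correct and follows essentially the same route as the paper: both parts reduce to absorbing the redundant join-terms in Eq.~\meqref{eq:301} using $d(x)\wedge d(y)\leq x\wedge d(y)$ in the decreasing case and $d(x)\wedge y,\ x\wedge d(y)\leq d(x)\wedge d(y)$ in the increasing case, with Lemma~\mref{pro:201} supplying the decreasing property of derivations. No issues to report.
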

\begin{proof}
	Assume that $d$ is an operator on a lattice $L$.
	
	\mnoindent
\mref{it:3011}	If  $d\in \DO(L)$, then
Lemma \mref{pro:201} gives that
 $d$ is decreasing
 and $d(x)\wedge d(y)\leq x\wedge  d( y)$ for all $x, y\in L$, which implies by Eq.\meqref{eq:der} that
	$$d(x\wedge y)=(d(x)\wedge  y)\vee (x\wedge d(y))=(d(x)\wedge y)\vee (x\wedge d(y))\vee (d(x)\wedge d(y)).$$ Thus $d\in\DW_{1}(L)$, and hence
	 $\DO(L)\subseteq\{d\in \DW_{1}(L) ~|~ d$ is decreasing$ \}.$

Conversely, if $d\in \DW_{1}(L)$ and $d$ is decreasing, then for all $ x, y\in L$, we have $d(x)\leq x$, and so
$d(x)\wedge d(y)\leq x \wedge d(y) $, which implies by Eq.\meqref{eq:301} that 
$$	d(x\wedge y)=(d(x)\wedge y)\vee (x\wedge d(y))\vee (d(x)\wedge d(y))=(d(x)\wedge y)\vee (x\wedge d(y)),$$
and thus $d\in \DO(L)$. Therefore
 $ \DO(L)=\{d\in \DW_{1}(L) ~|~ d$ is decreasing$ \}$. 

	\mnoindent
\mref{it:3012}
Assume that $d$ is increasing, and let $x, y\in L$. Then $x\leq d(x)$ and $y\leq d(y)$. So by Eq.\meqref{eq:301} we have
\begin{eqnarray*}
	d\in \DW_{1}(L) &\Leftrightarrow&
	d(x\wedge y)=(d(x)\wedge y)\vee (x\wedge d(y))\vee (d(x)\wedge d(y))	\\	&\Leftrightarrow&
	d(x\wedge y)=d(x)\wedge d(y). \hspace{5cm} \qedhere
\end{eqnarray*}	
\end{proof}

Proposition \mref{pro:301} states that every differential operator on a lattice is a \diffc.
The converse does not hold, as shown in the following examples.

\begin{example}  Let $L$ be a lattice and $a\in L$. 
\begin{enumerate}
\item If $L$ has a bottom element $0$ and
$a\neq 0$, then the constant operator $\textbf{C}_{(a)}$ with value $a$ defined in Example~\mref{exa:301} \mref{it:3011e} is in  $\DW_1(L)$ but not in
$\DO(L)$ by Lemma \mref{pro:201} \mref{it:2011}, since $\textbf{C}_{(a)}(0)=a\neq 0$. 
\mlabel{it:3011b}

\item If $L$ has a top element $1$ and $|L|\geq 2$, then the operator
$\tau^{(a)}$ (in Example~\mref{exa:301}\mref{it:3012e})
is in $\DW_{1}(L)$ but not in $\DO(L)$  by Lemma \mref{pro:201} \mref{it:2013}, since $\tau^{(a)}$ is not decreasing. 	
	\mlabel{it:3012b}
\end{enumerate}
 \mlabel{exa:301b}
\end{example}

The following properties suggests that \diffcs depend very much on their actions on the tops and bottoms. Further results in this direction can be found in Section~\mref{sec:qua}. 

\begin{proposition}
	Let $(L, \vee, \wedge, 0, 1)$ be a bounded lattice  and $d$ be an operator on $L$ such that $d(0)=d(1)=1$. Then $d\in \DW_{1}(L)$ if and only if
		$d
	=\textbf{C}_{(1)}$, where $\textbf{C}_{(1)}$ is the constant operator defined in Example \mref{exa:301} \mref{it:3011e}. 
	\mlabel{pro:001}
\end{proposition}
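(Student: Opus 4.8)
The backward implication is immediate: if $d=\textbf{C}_{(1)}$ then $d(0)=d(1)=1$ by definition, and $\textbf{C}_{(1)}\in\DW_1(L)$ by Example~\mref{exa:301}\,\mref{it:3011e}. So the content of the proposition lies entirely in the forward implication, and the plan is to pin down $d$ completely by specializing the difference identity Eq.~\meqref{eq:301} at the two extreme arguments $y=1$ and $y=0$, exploiting the hypotheses $d(1)=1$ and $d(0)=1$ one at a time.

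First I would set $y=1$ in Eq.~\meqref{eq:301}. Since $x\wedge 1=x$ and $d(1)=1$, the three meets on the right simplify to $d(x)\wedge 1=d(x)$, $x\wedge d(1)=x$, and $d(x)\wedge d(1)=d(x)$, while the left-hand side is $d(x\wedge 1)=d(x)$. The identity therefore reads $d(x)=d(x)\vee x$, whence $x\leq d(x)$ for every $x\in L$; that is, $d$ is increasing. (This is exactly the conclusion of Proposition~\mref{pro:3000}\,\mref{it:3003}, which could be cited instead.) In particular $x\vee d(x)=d(x)$ for all $x$.

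Next I would set $y=0$ in Eq.~\meqref{eq:301}. Since $x\wedge 0=0$ and $d(0)=1$, the right-hand side becomes $(d(x)\wedge 0)\vee(x\wedge 1)\vee(d(x)\wedge 1)=x\vee d(x)$, while the left-hand side is $d(x\wedge 0)=d(0)=1$. Hence $x\vee d(x)=1$ for every $x\in L$. (This matches Proposition~\mref{pro:3000}\,\mref{it:3005} in the case $d(0)=1$.) Combining the two specializations yields $d(x)=x\vee d(x)=1$ for all $x$, i.e. $d=\textbf{C}_{(1)}$, which finishes the forward implication.

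There is no real obstacle here: the statement is essentially a direct corollary of the two order constraints forced separately by $d(1)=1$ and $d(0)=1$. The only point requiring care is the routine lattice bookkeeping in the two specializations—correctly simplifying $d(x)\wedge 1$, $x\wedge 1$, $d(x)\wedge 0$, and $x\wedge 0$—after which the increasing property $x\leq d(x)$ and the identity $x\vee d(x)=1$ turn out to be exactly complementary and together force $d$ to be constantly $1$.
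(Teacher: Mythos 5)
Your proof is correct and is in substance the same elementary argument as the paper's: both reduce to specializing Eq.~\meqref{eq:301} at the extremes, the only cosmetic difference being that you combine the facts $x\leq d(x)$ and $x\vee d(x)=1$ (Proposition~\mref{pro:3000}\,\mref{it:3003} and \mref{it:3005}), whereas the paper applies the monotonicity statement Proposition~\mref{pro:3000}\,\mref{it:3002} to $0\leq x\leq 1=d(1)$ to get $1=d(0)\leq d(x)$ in one step. Both computations check out, so no changes are needed.
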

\begin{proof}
Let $L$ and $d$ be as given in the proposition. If $d\in \DW_{1}(L)$, then for all $x\in L$, since 
$0\leq x\leq 1=d(1)$, we have  
$1=d(0)\leq d(x)$ by Proposition \mref{pro:3000} \mref{it:3002}, and
so $d(x)=1$. Thus  $d=\textbf{C}_{(1)}$.

Conversely, if	$d=\textbf{C}_{(1)}$, then  $d\in \DW_{1}(L)$ by Example \mref{exa:301} \mref{it:3011e}.	
\end{proof}

\begin{proposition}
Let $(L, \vee, \wedge, 0, 1)$ be a bounded chain  and $d$ be an operator on $L$ such that $d(0)=1$. Then $d\in \DW_{1}(L)$ if and only if
 $d=\tau^{(a)}$ for some $ a\in L$,
 where $\tau^{(a)}$ is defined in Example \mref{exa:301} \mref{it:3012e}.
	\mlabel{pro:100}
\end{proposition}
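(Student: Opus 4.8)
The plan is to treat the two implications asymmetrically, since the reverse one is essentially already available. If $d=\tau^{(a)}$, then Example~\mref{exa:301}\mref{it:3012e} (whose detailed verification is promised in Proposition~\mref{por:111}) gives $\tau^{(a)}\in \DW_{1}(L)$ for any lattice with a top element; as a bounded chain is in particular such a lattice and $\tau^{(a)}(0)=1$ (because $0\neq 1$), the standing hypothesis $d(0)=1$ is automatically compatible. So all the content lies in the forward direction.

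For the forward direction I would start from $d\in \DW_{1}(L)$ with $d(0)=1$ and first extract the consequence that $x\vee d(x)=1$ for every $x\in L$. This is precisely Proposition~\mref{pro:3000}\mref{it:3006} applied with $a=0$: since $0\in L\backslash\{1\}$ and $d(0)=1$, every $x\in L$ satisfies $0\leq x$, whence $x\vee d(x)=1$. (Equivalently, one may set $y=0$ in Eq.~\meqref{eq:301} and simplify the right-hand side via $x\wedge 1=x$ and $d(x)\wedge 0=0$ to read off $d(0)=x\vee d(x)$.)

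The decisive step is then to invoke that $L$ is a \emph{chain}: in a totally ordered set $x\vee d(x)=\max\{x,d(x)\}$, so $x\vee d(x)=1$ forces $x=1$ or $d(x)=1$. Hence $d(x)=1$ for every $x\neq 1$, while $d(1)$ is left entirely unconstrained; setting $a:=d(1)$ gives exactly $d=\tau^{(a)}$.

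I expect the only real subtlety to be this last passage from $x\vee d(x)=1$ to $d(x)=1$, which is where totality is indispensable: in a general bounded lattice $x$ and $d(x)$ may be incomparable (e.g. complements), so the conclusion genuinely fails without the chain hypothesis. A minor bookkeeping point is the degenerate case $|L|=1$, in which $0=1$ and the statement collapses to a triviality; otherwise $0\neq 1$ is what simultaneously makes Proposition~\mref{pro:3000}\mref{it:3006} applicable and makes $\tau^{(a)}$ well defined.
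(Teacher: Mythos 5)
Your proof is correct and follows essentially the same route as the paper: both directions are handled by citing Example~\mref{exa:301}\mref{it:3012e} for the converse and by deriving $x\vee d(x)=1$ from $d(0)=1$ and then using totality of the chain to conclude $d(y)=1$ for $y\neq 1$. The only cosmetic difference is that you obtain $x\vee d(x)=1$ from Proposition~\mref{pro:3000}\mref{it:3006} (equivalently, directly from Eq.~\meqref{eq:301} with $y=0$), while the paper cites another item of the same proposition; the content is identical.
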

\begin{proof} 
Let $L$ and $d$ be as given in the proposition.	
If $d=\tau^{(a)}$ for some $a\in L$, then 	$d\in \DW_{1}(L)$ by Example \mref{exa:301} \mref{it:3012e}.
	
Conversely, if $d\in \DW_{1}(L)$, then  Proposition \mref{pro:3000} \mref{it:3004} gives
 $x\vee d(x)=1$ for all $x\in L$, which implies that $d(y)=1$ for all $y\in L\backslash \{1\}$ since $L$ is a chain.
Thus 	$d=\tau^{(a)}$, where $a=d(1)\in L$.
\end{proof}

Example \mref{exa:301} \mref{it:3012e} also gives examples of \diffcs that are neither isotone nor idempotent. Now we show that these conditions impose strong restrictions on \diffcs. 
   
\begin{proposition}
	Let $L$ be a  lattice and $d\in \DW_{1}(L)$. Then $d$ is isotone if and only if $d$ is a $\wedge$-homomorphism.
	\mlabel{pro:302}
\end{proposition}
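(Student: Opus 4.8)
The plan is to prove both implications directly from the difference operator identity Eq.~\meqref{eq:301}, exploiting the observation that one of the two inequalities needed for a $\wedge$-homomorphism is already built into that identity as an explicit join term.

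For the forward direction, I would suppose $d$ is isotone and fix $x, y\in L$. Since $x\wedge y\leq x$ and $x\wedge y\leq y$, isotonicity yields $d(x\wedge y)\leq d(x)$ and $d(x\wedge y)\leq d(y)$, hence $d(x\wedge y)\leq d(x)\wedge d(y)$. For the reverse inequality, I would simply note that $d(x)\wedge d(y)$ occurs as one of the three join summands on the right-hand side of Eq.~\meqref{eq:301}, so it is dominated by the join: $d(x)\wedge d(y)\leq d(x\wedge y)$. Combining the two inequalities gives $d(x\wedge y)=d(x)\wedge d(y)$, so $d$ is a $\wedge$-homomorphism.

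For the converse, I would suppose $d$ is a $\wedge$-homomorphism and take $x\leq y$. Then $x\wedge y=x$, so $d(x)=d(x\wedge y)=d(x)\wedge d(y)$, which forces $d(x)\leq d(y)$; thus $d$ is isotone.

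I expect no serious obstacle here: the entire content is that the summand $d(x)\wedge d(y)$ in Eq.~\meqref{eq:301} supplies the inequality $d(x)\wedge d(y)\leq d(x\wedge y)$ for free, while isotonicity supplies the opposite inequality $d(x\wedge y)\leq d(x)\wedge d(y)$. The only point worth flagging is that the converse implication uses only the definition of a $\wedge$-homomorphism and not the $\DW_{1}(L)$ hypothesis at all, so the $\DW_{1}(L)$ assumption is genuinely needed only in the forward direction.
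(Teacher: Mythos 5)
Your proof is correct and is essentially identical to the paper's own argument: the forward direction combines $d(x\wedge y)\leq d(x)\wedge d(y)$ from isotonicity with $d(x)\wedge d(y)\leq d(x\wedge y)$ read off from the join in Eq.~\meqref{eq:301}, and the converse applies the $\wedge$-homomorphism property to $x\wedge y=x$. Your closing observation that the $\DW_{1}(L)$ hypothesis is used only in the forward direction is accurate and consistent with the paper's proof.
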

\begin{proof}
$(\Longleftarrow)$. Assume that  $d$ is a $\wedge$-homomorphism, and let
$x, y\in L$ with $x\leq y$. Then
  $d(x)=d(x\wedge y)=d(x)\wedge d(y)$, and so $d(x)\leq d(y)$. Thus $d$ is isotone. 	
	
$(\Longrightarrow)$.	If  $d\in \DW_{1}(L)$ and $d$ is isotone, then 
	$d(x\wedge y)\leq d(x)\wedge d(y)$
	for all $x, y\in L$.
	Also, we have $d(x)\wedge d(y)\leq d(x\wedge y)$ by Eq.\meqref{eq:301}. Thus   $d(x\wedge y)=d(x)\wedge d(y)$.	
\end{proof}

\begin{corollary}
	Let $L$ be a  lattice and $d\in \DW_{1}(L)$. Then $d: L\to L$ is a lattice homomorphism if and only if
	$d$ is a $\vee$-homomorphism, i.e., $d(x\vee y)=d(x)\vee d(y)$ for all $x, y\in L$.
	\mlabel{cor:301}
\end{corollary}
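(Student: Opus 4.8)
The plan is to reduce the whole statement to the equivalence already established in Proposition~\mref{pro:302}. By definition $d$ is a lattice homomorphism precisely when it preserves both $\vee$ and $\wedge$. Since the asserted condition is exactly preservation of $\vee$, the forward implication is immediate: a lattice homomorphism is in particular a $\vee$-homomorphism. Thus all the content lies in the converse, namely showing that for $d\in\DW_1(L)$ the single hypothesis ``$d$ is a $\vee$-homomorphism'' already forces $d$ to preserve $\wedge$ as well.

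The first step I would carry out is the purely order-theoretic observation that \emph{any} $\vee$-homomorphism is isotone. Indeed, if $x\leq y$ then $x\vee y=y$, so $d(y)=d(x\vee y)=d(x)\vee d(y)$, which gives $d(x)\leq d(y)$. This uses only the characterization $x\leq y\iff x\vee y=y$ and imposes no condition on $d$ beyond preservation of joins.

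The second step invokes the difference-operator hypothesis. Having shown that $d\in\DW_1(L)$ is isotone, I would apply Proposition~\mref{pro:302} directly to conclude that $d$ is a $\wedge$-homomorphism. Combining this with the assumed preservation of $\vee$ yields that $d$ preserves both lattice operations, i.e.\ $d$ is a lattice homomorphism, which completes the nontrivial implication.

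I expect no genuine obstacle here; the corollary is essentially a repackaging of Proposition~\mref{pro:302} together with the elementary fact that join-preservation implies isotonicity. The one point worth stating carefully is precisely \emph{why} the chain of implications closes up: it is the hypothesis $d\in\DW_1(L)$, through Eq.~\meqref{eq:301}, that makes isotonicity equivalent to $\wedge$-preservation. Without this hypothesis a $\vee$-homomorphism on a lattice need not be a lattice homomorphism at all, so the difference-operator assumption is doing the real work in the background.
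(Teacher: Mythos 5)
Your proposal is correct and follows exactly the paper's own argument: the forward direction is immediate from the definition of lattice homomorphism, and the converse shows that a $\vee$-homomorphism is isotone via $d(y)=d(x\vee y)=d(x)\vee d(y)$ for $x\leq y$, then invokes Proposition~\ref{pro:302} to obtain $\wedge$-preservation. No differences worth noting.
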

\begin{proof}
$(\Longrightarrow)$. It follows immedialtely from the fact the a lattice homomorphism is both a $\vee$-homomorphism and a $\wedge$-homomorphism.

$(\Longleftarrow)$.
 Assume that  $d$ is a $\vee$-homomorphism, and let
$x, y\in L$ with $x\leq y$. Then
$d(y)=d(x\vee y)=d(x)\vee d(y)$, and so $d(x)\leq d(y)$. Thus $d$ is isotone, and therefore 	
$d$ is a lattice homomorphism by Proposition \mref{pro:302}. 	
\end{proof}

\begin{corollary}
	Let $L$ be a  chain with a top element $1$ and $d$ be an operator on $L$ such that $d(1)=1$. Then
	$d\in \DW_{1}(L)$ if and only if
	$d$ is increasing and isotone.
	\mlabel{cor:302}
\end{corollary}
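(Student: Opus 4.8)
The plan is to prove the two implications separately, each by invoking an earlier result, so that the corollary reduces to a short bookkeeping argument together with one direct verification exploiting the chain structure.

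For the forward direction, suppose $d\in\DW_{1}(L)$. Since we are given $d(1)=1$, I would simply apply Proposition \mref{pro:3000}\mref{it:3003}, which asserts precisely that the hypothesis $d(1)=1$ forces $d$ to be increasing and isotone. Thus this direction requires nothing beyond citing that item.

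For the converse, suppose $d$ is increasing and isotone; the goal is to place $d$ in $\DW_{1}(L)$. Because $d$ is increasing, Proposition \mref{pro:301}\mref{it:3012} reduces the task to verifying that $d$ is a $\wedge$-homomorphism, that is, $d(x\wedge y)=d(x)\wedge d(y)$ for all $x,y\in L$. Here the chain hypothesis enters: given $x,y\in L$, comparability lets me assume without loss of generality that $x\le y$, whence $x\wedge y=x$ on the left and, by isotonicity, $d(x)\le d(y)$ so that $d(x)\wedge d(y)=d(x)$ on the right; both sides equal $d(x)$, establishing the $\wedge$-homomorphism property. Proposition \mref{pro:301}\mref{it:3012} then yields $d\in\DW_{1}(L)$, completing the equivalence.

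I do not expect a genuine obstacle here: the substance of the corollary is the observation that, on a chain, isotonicity is exactly the extra condition that upgrades an increasing operator to a $\wedge$-homomorphism, and the two prior results then bracket the stated equivalence. The only point demanding a little care is invoking the chain property to handle both orderings of the pair $x,y$, which the symmetry of the $\wedge$-homomorphism identity renders harmless.
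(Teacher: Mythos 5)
Your proof is correct and follows essentially the same route as the paper: Proposition \mref{pro:3000}\mref{it:3003} for the forward direction, and Proposition \mref{pro:301}\mref{it:3012} combined with the observation that on a chain an isotone map is automatically a $\wedge$-homomorphism for the converse. You merely spell out that last observation in slightly more detail than the paper does.
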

\begin{proof}
Let $L$ and $d$ be as given in the corollary. 
If $d\in \DW_{1}(L)$ and $d(1)=1$, then $d$ is increasing and isotone by Proposition \mref{pro:3000} \mref{it:3003}.	

Conversely, if  $d$
is increasing and isotone, then
notice that in a chain $L$,  $d$
is isotone if and only if $d$ is a $\wedge$-homomorphism, we get
$d\in \DW_{1}(L)$ by  Proposition \mref{pro:301} \mref{it:3012}. 	
\end{proof}

\begin{proposition}	\mlabel{pro:303}
	Let $L$ be a  lattice and $d\in \DW_{1}(L)$. Then $d$ is idempotent if and only if $d(d(x)\wedge d(y))=d(x)\wedge d(y)$ for all $x, y\in L$.
\end{proposition}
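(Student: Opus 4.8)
The plan is to treat the two implications separately; the harder of the two is only mildly computational. For the forward direction, I would assume $d^{2}=d$ and expand the expression $d(d(x)\wedge d(y))$ by applying the defining identity Eq.~\meqref{eq:301} with the arguments $d(x)$ and $d(y)$ substituted for $x$ and $y$. This is legitimate because Eq.~\meqref{eq:301} holds for every pair of elements of $L$, and $d(x), d(y)\in L$. The substitution yields
$$d(d(x)\wedge d(y))=(d^{2}(x)\wedge d(y))\vee (d(x)\wedge d^{2}(y))\vee (d^{2}(x)\wedge d^{2}(y)).$$
Invoking idempotency to replace $d^{2}(x)$ by $d(x)$ and $d^{2}(y)$ by $d(y)$, each of the three joinands collapses to $d(x)\wedge d(y)$, so the entire right-hand side equals $d(x)\wedge d(y)$. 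This gives exactly the claimed identity.

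For the converse, I would simply specialize the hypothesis $d(d(x)\wedge d(y))=d(x)\wedge d(y)$ to the case $y=x$. Since $d(x)\wedge d(x)=d(x)$, this reads $d(d(x))=d(x)$ for all $x\in L$, which is precisely the statement that $d$ is idempotent. No use of Eq.~\meqref{eq:301} is even needed here.

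I do not anticipate any genuine obstacle: the forward direction is a one-line substitution into Eq.~\meqref{eq:301} followed by an application of idempotency to the three terms, and the converse is an immediate specialization at $y=x$. The only minor point warranting a word of justification is that we are allowed to feed the elements $d(x)$ and $d(y)$ into Eq.~\meqref{eq:301}; but as that equation is quantified over all of $L$, there is nothing further to verify.
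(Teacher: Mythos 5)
Your proof is correct and is essentially identical to the paper's: the forward direction applies Eq.~\eqref{eq:301} to the pair $d(x), d(y)$ and then uses idempotency, and the converse specializes the hypothesis at $y=x$. Nothing further is needed.
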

\begin{proof}
If  $d\in \DW_{1}(L)$ and $d$ is idempotent, then 	for all $x, y\in L$, we have
by Eq.\meqref{eq:301} that	$d(d(x)\wedge d(y))= (d^{2}(x)\wedge d(y))\vee (d(x)\wedge d^{2}(y))\vee (d^{2}(x)\wedge d^{2}(y))=d(x)\wedge d(y)$.	
	
Conversely, if $d(d(x)\wedge d(y))=d(x)\wedge d(y)$ for all $x, y\in L$, then  $d^{2}(x)=d(d(x)\wedge d(x))=d(x)\wedge d(x)=d(x)$, and  so $d$ is idempotent.
\end{proof}

Let $L$ be a lattice and $a\in L$. Define $\psi_{(a)}: L\to L$ by
 $\psi_{(a)}(x)=x\vee a$ for all $x\in L$. Then $\psi_{(a)}$ is usually not in $\DW_{1}(L)$, as shown below. 
 
 \begin{example}
  Let $M_{3}=\{0, b_{1}, b_{2}, b_{3}, 1\}$ be the modular lattice with Hasse diagram as follows
 $$
 \begin{tikzpicture}
 \tikzstyle{every node}=[draw,circle,fill=black,node distance=1.0cm,
 minimum size=1.0pt, inner sep=1.0pt]
 \node[circle] (1)                        [label=above :   $1$]{};
 \node[circle] (2) [below   of=1]     [label=left : $b_{2}$]{};
 \node[circle] (3) [ left of=2]             [label=left  : $b_{1}$] {};
 \node[circle] (4) [ right   of=2]     [label=right : $b_{3}$]{};
 \node[circle] (5) [below   of=2]     [label=below: $0$] {};
 
 \draw[-] (1) --   (2); \draw[-] (1) --   (3); \draw[-] (1) --   (4);
 \draw[-] (2) --   (5); \draw[-] (3) --   (5);
 \draw[-] (4) --   (5);
\end{tikzpicture}
$$
We have $\psi_{(b_{1})}(b_{2}\wedge b_{3})=b_{1}\neq 1= (\psi_{(b_{1})}(b_{2})\wedge b_{3})\vee (b_{2}\wedge \psi_{(b_{1})}(b_{3}))\vee (\psi_{(b_{1})}(b_{2})\wedge \psi_{(b_{1})}(b_{3}))$, so
 $\psi_{(b_{1})}\not\in \DW_{1}(M_{3})$.
 \mlabel{exa:215}
  \end{example}
 
It is interesting to observe that the distributivity of a lattice can be characterized by $\psi_{(a)}$ being \diffcs. 

\begin{theorem}\mlabel{the:001}
Let $L$ be a lattice. Then $L$ is distributive if and only if  $\psi_{(a)}$ is in $\DW_{1}(L)$ for all $a\in L$. 
\end{theorem}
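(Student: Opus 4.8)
The plan is to reduce membership of $\psi_{(a)}$ in $\DW_{1}(L)$ to a $\wedge$-homomorphism condition, and then to recognize that condition as the distributive law. First I would observe that $\psi_{(a)}$ is increasing, since $x\leq x\vee a=\psi_{(a)}(x)$ for every $x\in L$. This lets me invoke Proposition \ref{pro:301} \ref{it:3012} (the criterion for increasing operators), which says that $\psi_{(a)}\in\DW_{1}(L)$ if and only if $\psi_{(a)}$ is a $\wedge$-homomorphism. Unwinding $\psi_{(a)}(x)=x\vee a$, the $\wedge$-homomorphism property reads
\[
(x\wedge y)\vee a=(x\vee a)\wedge(y\vee a) \tforall x, y\in L.
\]
Hence the hypothesis ``$\psi_{(a)}\in\DW_{1}(L)$ for all $a\in L$'' is exactly the assertion that this identity holds for all $a,x,y\in L$.

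Second, I would recognize the displayed identity, quantified over all $a,x,y$, as the join-over-meet form of the distributive law, namely $a\vee(x\wedge y)=(a\vee x)\wedge(a\vee y)$. By the standard characterization of distributive lattices (see \mcite{Davey}) this holds for all $a,x,y$ precisely when $L$ is distributive. This yields both implications simultaneously: if $L$ is distributive then the identity holds, so each $\psi_{(a)}$ is a $\wedge$-homomorphism and, being increasing, lies in $\DW_{1}(L)$; conversely, if every $\psi_{(a)}$ lies in $\DW_{1}(L)$ then the identity holds for all $a,x,y$, so $L$ is distributive.

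The argument is essentially a translation, so I do not expect a genuinely hard step. The one point needing a little care is the passage from ``$\psi_{(a)}$ is a $\wedge$-homomorphism for every $a$'' to full distributivity: one must use that the identity is quantified over all $a$ rather than a single fixed $a$, and that the join-over-meet and meet-over-join forms of the distributive law are equivalent, a standard lattice-theoretic fact. It is also worth recording explicitly that the inequality $(x\wedge y)\vee a\leq(x\vee a)\wedge(y\vee a)$ holds in \emph{every} lattice (both $x\wedge y$ and $a$ lie below each of $x\vee a$ and $y\vee a$), so that distributivity is only ever needed for the reverse inequality; this matches the usual formulation and confirms that no hypothesis beyond distributivity is being smuggled in.
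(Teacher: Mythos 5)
Your proof is correct and follows exactly the paper's route: both observe that $\psi_{(a)}$ is increasing, apply Proposition \mref{pro:301} \mref{it:3012} to reduce membership in $\DW_{1}(L)$ to the $\wedge$-homomorphism condition, and identify that condition (quantified over all $a$) with distributivity. You merely spell out the step the paper labels ``easy to check,'' which is a welcome addition but not a different argument.
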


\begin{proof}
Assume that $L$ is a lattice. It is easy to check that $L$ is distributive if and only if  $\psi_{(a)}$ is a $\wedge$-homomorphism
for any $a\in L$.
Also, since $\psi_{(a)}$ is increasing, it follows immediately from Proposition \ref{pro:301} \ref{it:3012}  that
$\psi_{(a)}\in \DW_{1}(L)$ if and only if $L$ is distributive. 
\end{proof}

\subsection{Derived \diffcs}
\mlabel{su:new}

We now derive new \diffcs from the given ones.

\begin{proposition}
	Let  $L$ be a lattice with a top element $1$ and $d$ be an operator on   $ L$. For a given $u\in L$,  define  an operator $d^{u}$ on $L$ by
\begin{equation} 
	d^{u}(x):=
	\begin{cases}
	u,  & \textrm{if}~ x=1; \\
	d(x),  & \text{otherwise}.
	\end{cases}
\mlabel{eq:du}
\end{equation}
\begin{enumerate} 
	\item If $d\in \DW_1(L)$ and $u\leq d(1)$, then  $d^{u}$ is in $\DW_{1}(L)$. 
	In particular, $\tau^{(a)}$ is in $\DW_{1}(L)$, where $\tau^{(a)}$ is defined in Example \ref{exa:301}. \mlabel{it:111}
\item 	If $d$ satisfies $d(1)\leq u$ and
 $x\wedge u\leq d(x)$ for all $x\in L\backslash \{1\}$, then $d\in \DW_{1}(L)$ if and only if $d^u\in \DW_{1}(L)$. \mlabel{it:112}
 
 \item 	If $d$ satisfies
 $x\leq d(x)$ for all $x\in L\backslash \{1\}$, then $d\in \DW_{1}(L)$ if and only if $d^1\in \DW_{1}(L)$. \mlabel{it:113}
\end{enumerate}
	\mlabel{por:111}
\end{proposition}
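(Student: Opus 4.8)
The plan is to verify the difference operator identity Eq.~\meqref{eq:301} for $d^{u}$ by a case analysis on the arguments $x,y$, exploiting that $d^{u}$ differs from $d$ only at the top element $1$, and that the value at $1$ enters Eq.~\meqref{eq:301} precisely when one of the arguments equals $1$. Since $x\wedge y=1$ forces $x=y=1$, there are exactly three cases: both arguments equal $1$; neither equals $1$; and exactly one equals $1$ (by the symmetry of Eq.~\meqref{eq:301} in $x$ and $y$, the subcase $x=1,\,y\neq 1$ covers $x\neq 1,\,y=1$ as well). In the first case both sides equal $d^{u}(1)=u$ by Lemma~\mref{lem:x=y}, so nothing is needed. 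In the second case $x\wedge y\neq 1$, so $d^{u}$ agrees with $d$ at all three relevant points $x$, $y$, $x\wedge y$, and Eq.~\meqref{eq:301} for $d^{u}$ is literally the same statement as for $d$.

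The only substantive case is $x=1,\,y\neq 1$, where $d^{u}(x\wedge y)=d(y)$ and, after absorbing $u\wedge d(y)\leq d(y)$, the right-hand side collapses to $(u\wedge y)\vee d(y)$; thus the identity for $d^{u}$ reduces to the single inequality $u\wedge y\leq d(y)$. For part~\mref{it:111} I would establish this as follows: applying Eq.~\meqref{eq:301} to $d$ with the pair $(1,y)$ and absorbing terms gives $d(1)\wedge y\leq d(y)$, whence $u\leq d(1)$ yields $u\wedge y\leq d(1)\wedge y\leq d(y)$. Together with the two trivial cases this proves $d^{u}\in\DW_{1}(L)$. The stated special case then follows by taking $d=\textbf{C}_{(1)}\in\DW_{1}(L)$ and $u=a\leq 1=d(1)$, since in that situation $d^{a}=\tau^{(a)}$.

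For part~\mref{it:112} the equivalence drops straight out of the same trichotomy. Under the hypotheses $d(1)\leq u$ and $x\wedge u\leq d(x)$ for all $x\in L\setminus\{1\}$, the reduced mixed-case inequality holds for $d^{u}$ by hypothesis ($u\wedge y\leq d(y)$), and it holds for $d$ because $d(1)\wedge y\leq u\wedge y\leq d(y)$; hence the mixed case and the case $x=y=1$ impose no condition on either operator, while the remaining case $x,y\neq 1$ is an identical statement for $d$ and for $d^{u}$. Therefore $d\in\DW_{1}(L)$ if and only if $d^{u}\in\DW_{1}(L)$. Finally, part~\mref{it:113} is the specialization $u=1$ of part~\mref{it:112}: the hypothesis $d(1)\leq 1$ is automatic, and $x\wedge 1\leq d(x)$ becomes exactly $x\leq d(x)$ for $x\in L\setminus\{1\}$.

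I do not anticipate a genuine obstacle; the only point requiring care is the bookkeeping that confirms the value at $1$ affects Eq.~\meqref{eq:301} solely in the mixed case, so that the entire verification collapses to the inequality $u\wedge y\leq d(y)$. Once that reduction is made explicit, parts~\mref{it:111}--\mref{it:113} are immediate consequences of the respective hypotheses.
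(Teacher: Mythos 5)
Your proposal is correct and follows essentially the same route as the paper: both arguments reduce Eq.~\meqref{eq:301} for $d^{u}$ to the single mixed-case inequality $u\wedge y\leq d(y)$ (via Lemma~\mref{lem:x=y}, the observation that $d^{u}$ agrees with $d$ away from $1$, and the absorption $e(1)\wedge e(y)\leq e(y)$), and both obtain that inequality in part~\mref{it:111} from $u\leq d(1)$ together with $d(1)\wedge y\leq d(y)$ extracted from Eq.~\meqref{eq:301} applied to the pair $(1,y)$. The only (harmless) divergence is in part~\mref{it:112}, where the paper handles the direction $d^{u}\in\DW_{1}(L)\Rightarrow d\in\DW_{1}(L)$ by writing $d=(d^{u})^{d(1)}$ and citing part~\mref{it:111}, while you observe directly that under the stated hypotheses both mixed-case inequalities hold unconditionally, so membership in $\DW_{1}(L)$ for $d$ and for $d^{u}$ each come down to the identical condition on pairs $x,y\neq 1$.
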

\begin{proof}
Let $L, d, u$ and $d^{u}$ be as given in the proposition. 

\noindent	
\mref{it:111} Assume that $d\in \DW_1(L)$ and $u\leq d(1)$.	
To prove  $d^u\in \DW_{1}(L)$,
let $x, y\in L$ which we can assume to be distinct because of Lemma~\mref{lem:x=y}.

If $x\neq 1$ and $y\neq 1$, then 
	$x\wedge y\neq 1$, and so $d^{u}(x)=d(x), d^{u}(y)=d(y)$ and
	$d^{u}(x\wedge y)=d(x\wedge y)$.
	Since $d\in \DW_{1}(L)$, we have	 
	$d^{u}(x\wedge y)=d(x\wedge y)=(d(x)\wedge y)\vee (x\wedge d(y))\vee (d(x)\wedge d(y))=(d^{u}(x)\wedge y)\vee (x\wedge d^{u}(y))\vee (d^{u}(x)\wedge d^{u}(y)).$ 
	
If $ x=1$ or $y=1$ (but not both), say $x=1$ and $ y\neq 1$, then $d^{u}(x)=u$ and $d^{u}(y)=d(y)$.
Since $d^{u}(1)=u\leq d(1)$, we have
	$y\wedge d^{u}(1)\leq y\wedge d(1)\leq d(y)$ by 
	Eq.\meqref{eq:301}, and so
	$d^{u}(x\wedge y)=d^{u}(y)=d(y)=d(y)\vee (y\wedge d^{u}(1))\vee (d^{u}(x)\wedge d(y))=(x\wedge d^{u}(y))\vee(y\wedge d^{u}(x) )\vee  (d^{u}(x)\wedge d^{u}(y)).$
	
Thus we conclude that  $d^{u}$ is in $\DW_{1}(L)$.

\smallskip 

\noindent 
\mref{it:112} Assume that  
$d$ satisfies $d(1)\leq u$ and $x\wedge u\leq d(x)$ for all $x\in L\backslash \{1\}$.	
If $d^u\in \DW_{1}(L)$, then 
 $d=(d^u)^{d(1)}\in  \DW_{1}(L)$ by Item~\mref{it:111}, 
since
$d(1)\leq u=d^u (1)$. 

Conversely, assume that $d\in \DW_{1}(L)$.
To prove  $d^u\in \DW_{1}(L)$, 
let $x, y\in L$ which we can assume to be distinct because of Lemma~\mref{lem:x=y}.
If $x\neq 1$ and $y\neq 1$, then 
similar to the proof of Item \mref{it:111} we have	 
$d^u(x\wedge y)=(d^u(x)\wedge y)\vee (x\wedge d^u(y))\vee (d^u(x)\wedge d^u(y)).$

If $x=1$ or $ y=1$ (but not both), say $x=1$ and $y\neq 1$, then
$d^u(x)= u$, $d^u(y)=d(y)$ and
$u\wedge y\leq d(y)$, which implies that
$d^u(x\wedge y)=d^u(y)=d(y)=(u\wedge y)\vee d(y)=(d^u(x)\wedge y)\vee (x\wedge d^u(y))\vee (d^u(x)\wedge d^u(y)).$
Thus we conclude that  $d^u$ is in $\DW_{1}(L)$.

\smallskip 

\noindent 
\mref{it:113} follows immediately from
Item \ref{it:112} when we take $u=1$.
\end{proof}

Note that, in Proposition \mref{por:111}, if $d\in \DW_1(L)$ but $u\not\leq d(1)$, then  $d^{u}$ needs not be in $\DW_{1}(L)$. For example, let
$L$ be a bounded lattice with $|L|\geq 3$, and  let $d=\textbf{C}_{(0)}$, $u=1$. Then  $d\in \DW_1(L)$ by Example \mref{exa:301} \mref{it:3011e}, but $d^u \not\in \DW_{1}(L)$, since
$d^{u}(1\wedge a)=d^{u}(a)=0\neq a=(d^u(1)\wedge a)\vee (1\wedge d^u(a))\vee (d^u(1)\wedge d^u(a))$ for all $a\in L\backslash \{0, 1\}$.

\begin{proposition}
	Let  $L$ be a chain with the top element $1$ and $d\in\DW_{1}(L)$. 
\begin{enumerate} 	
\item The operator $D$ on $L$ defined by
	$$
	D(x):=
	\begin{cases}
	d(x),  & \textrm{if}~ x\leq d(1); \\
	1,  & \textrm{otherwise}
	\end{cases}
	$$
 is in $\DW_{1}(L)$.
\mlabel{it:dchain1} 
\item 
	The operator $\mathfrak{D}$ on  $L$ defined by
$$
\mathfrak{D}(x):=
\begin{cases}
	d(1),  & \textrm{if}~ x\leq d(1); \\
	d(x),  & \textrm{otherwise}
\end{cases}
$$
is in $\DW_{1}(L)$.
\mlabel{it:dchain2}
\end{enumerate}
	\mlabel{por:113}		
\end{proposition}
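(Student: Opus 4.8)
The plan is to verify the defining identity \meqref{eq:301} for $D$ (resp.\ $\mathfrak{D}$) directly, exploiting that $L$ is a chain. Both sides of \meqref{eq:301} are symmetric in $x$ and $y$, and the case $x=y$ is automatic by Lemma \mref{lem:x=y}, so I may assume $x<y$; in a chain this gives $x\wedge y=x$, and the identity to check reduces to showing that $D(x)$ (resp.\ $\mathfrak{D}(x)$) equals the join $(D(x)\wedge y)\vee(x\wedge D(y))\vee(D(x)\wedge D(y))$. The only external inputs I will need are Proposition \mref{pro:3000}\mref{it:3001}, which says that $x\leq d(1)$ implies $x\leq d(x)$, together with the fact that $d$ itself satisfies \meqref{eq:301}. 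Throughout I split into three regions according to the position of $x$ and $y$ relative to the cut element $d(1)$: both $\leq d(1)$; straddling, i.e.\ $x\leq d(1)<y$; and both $>d(1)$.

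For part \mref{it:dchain1}, in the region $x\leq y\leq d(1)$ both $D$-values coincide with the corresponding $d$-values, so the identity for $D$ is literally \meqref{eq:301} for $d$ and holds because $d\in\DW_1(L)$. In the region $d(1)<x\leq y$ one has $D(x)=D(y)=1$, and the right-hand join collapses to $1=D(x)$. The straddling case $x\leq d(1)<y$ is the substantive one: here $D(x)=d(x)$ and $D(y)=1$, so the right-hand side is $(d(x)\wedge y)\vee x\vee d(x)$; since $x\leq d(1)$ forces $x\leq d(x)$ by Proposition \mref{pro:3000}\mref{it:3001}, this join simplifies to $d(x)=D(x)$, as required.

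Part \mref{it:dchain2} is handled by the same three-region scheme. When $x\leq y\leq d(1)$ we have $\mathfrak{D}(x)=\mathfrak{D}(y)=d(1)$, and because $x,y\leq d(1)$ the join on the right equals $d(1)=\mathfrak{D}(x)$. When $d(1)<x\leq y$ both $\mathfrak{D}$-values equal the $d$-values, so again the identity is exactly \meqref{eq:301} for $d$. In the straddling case $x\leq d(1)<y$ we have $\mathfrak{D}(x)=d(1)$ and $\mathfrak{D}(y)=d(y)$; using the total order, $\mathfrak{D}(x)\wedge y=d(1)\wedge y=d(1)$ while the remaining terms $x\wedge d(y)$ and $d(1)\wedge d(y)$ are both $\leq d(1)$, so the right-hand join is again $d(1)=\mathfrak{D}(x)$.

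The only place where more than bookkeeping is required is the straddling region, and in both parts it is resolved by the same two chain-specific observations: that $d(1)\wedge y=d(1)$ when $d(1)<y$, and that $x\leq d(1)$ implies $x\leq d(x)$ (Proposition \mref{pro:3000}\mref{it:3001}). I expect this straddling case to be the main obstacle, since it is the only one where the modified operator takes its new value at one argument and its old value at the other, so the verification cannot simply be inherited from $d\in\DW_1(L)$ and must instead be closed by hand using totality of the order.
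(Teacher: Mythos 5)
Your proof is correct. For part \mref{it:dchain2} it follows essentially the same route as the paper: the same split into the three regions determined by the cut element $d(1)$, with the straddling case $x\leq d(1)<y$ closed by hand; you in fact do this slightly more economically, observing that $\mathfrak{D}(x)\wedge y=d(1)\wedge y=d(1)$ already dominates the other two terms, whereas the paper additionally invokes Proposition \mref{pro:3000} \mref{it:3004} to get $d(1)\leq d(y)$ (which is not needed). For part \mref{it:dchain1}, however, your route is genuinely different: the paper does not verify Eq.~\meqref{eq:301} directly but instead notes that $D(1)=1$, that $D$ is increasing (via Proposition \mref{pro:3000} \mref{it:3001}) and isotone (via Proposition \mref{pro:3000} \mref{it:3002}), and then applies the characterization in Corollary \mref{cor:302} of elements of $\DW_1(L)$ fixing the top of a chain. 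Your direct three-region verification is more elementary and makes the two parts uniform, at the cost of being a bit longer; the paper's argument for $D$ is shorter and situates $D$ inside the already-understood family of increasing isotone maps with $D(1)=1$, which is the same family counted by the Catalan number in Lemma \mref{lem:400}. Both arguments are complete and correct.
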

\begin{proof}
	Let $L, d, D$ and $\mathfrak{D}$ be as given in the proposition. Let $x, y\in L$.
	
\noindent
\mref{it:dchain1}		
 By Proposition \mref{pro:3000} \mref{it:3001}, we have $x\leq d(x)$ for all $x\leq d(1)$, and so $x\leq D(x)$
	for all $x\in L$. Also, $D$ is isotone
by Proposition \mref{pro:3000} \mref{it:3002}.	
Thus    $D\in \DW_{1}(L)$ by Corollary \mref{cor:302}.

\smallskip

\noindent 
\mref{it:dchain2} 
Since $L$ is a chain,
to prove $\mathfrak{D}\in \DW_{1}(L)$, we assume
$x<y$ (i.e, $x\leq y$ and $x\neq y$)  by Lemma \ref{lem:x=y}. 
If $x<y\leq d(1)$, then $\mathfrak{D}(x)=\mathfrak{D}(y)=d(1)$, and so
$\mathfrak{D}(x\wedge y)=\mathfrak{D}(x)=(\mathfrak{D}(x)\wedge y)\vee (x\wedge \mathfrak{D}(y))\vee (\mathfrak{D}(x)\wedge \mathfrak{D}(y)).$ 

If $x\leq d(1)<y$, then
$\mathfrak{D}(x)=d(1)$, $ \mathfrak{D}(y)=d(y)$ and
$d(1)\leq d(y)$ by Proposition \mref{pro:3000} \mref{it:3004},
which implies that
$\mathfrak{D}(x\wedge y)=\mathfrak{D}(x)=d(1)=(\mathfrak{D}(x)\wedge y)\vee (x\wedge \mathfrak{D}(y))\vee (\mathfrak{D}(x)\wedge \mathfrak{D}(y))$.

If  $d(1)<x< y$, then
$\mathfrak{D}(x)=d(x), \mathfrak{D}(y)=d(y)$ and
$ d(1) < x\wedge y$. Thus
$\mathfrak{D}(x\wedge y)=d(x\wedge y)=(d(x)\wedge y)\vee (x\wedge d(y))\vee (d(x)\wedge d(y))
=(\mathfrak{D}(x)\wedge y)\vee (x\wedge \mathfrak{D}(y))\vee (\mathfrak{D}(x)\wedge \mathfrak{D}(y))$.

Summarizing the above arguments, we conclude that  $\mathfrak{D}$ is in $\DW_{1}(L)$.
\end{proof}

In Proposition \mref{por:113}, if $L$ is not a chain,
 then  $D$ or $\mathfrak{D}$ may be not in $\DW_{1}(L)$. For example, let
$M_{3}=\{0, b_{1}, b_{2}, b_{3}, 1\}$ be the modular lattice in Example \mref{exa:215}
 and  define $d: M_{3} \to M_{3}$ by
 $d(x):=x \wedge b_{1}$ for all $x\in M_{3}$. Then $d$ is a derivation \mcite{GG}, and so
  $d\in \DW_1(M_3)$ by Proposition \mref{pro:301} \mref{it:3011}, but $D $ and $\mathfrak{D}$ are not in $ \DW_{1}(M_{3})$, since
$D(b_{2}\wedge b_{3})=D(0)=d(0)=0\neq 1=(D(b_{2})\wedge b_{3})\vee (b_{2}\wedge D(b_{3}))\vee (D(b_{2})\wedge D(b_{3}))$ and
$\mathfrak{D}(b_2\wedge b_3)=\mathfrak{D}(0)=d(1)=b_1\neq 0 =(\mathfrak{D}(b_{2})\wedge b_3)\vee (b_2 \wedge \mathfrak{D}(b_3))\vee (\mathfrak{D}(b_2)\wedge \mathfrak{D}(b_{3})).$ 

\section{Difference operators  on finite chains}
\mlabel{sec:chain}

For a positive integer $n$, let
 $L_{n}=\{a_{0}, a_{1}, a_{2}, \cdots,  a_{n-1}\}$ be a $n$-element chain with $$0=a_{0}< a_{1}< a_{2}< \cdots< a_{n-2}< a_{n-1}=1.$$
For  $i,j\in \{0,1,2,\cdots,n-1\}$, denote 
$\Omega_i^j(L_n):=\{d\in \DW_{1}(L_{n}) ~|~d(a_{j})=a_{i}\}$ and $ \omega_{i}^{j}(L_{n}):=|\Omega_{i}^{j}(L_{n})|$, the cardinality of $\Omega_i^j(L_n)$.

Our goal in this section is to determine the cardinality $|\DW_{1}(L_{n})|$.
In \mcite{Hi}, Higgins investigates combinatorial properties of the semigroup $\mathcal{O}_{n}$ of all isotone mappings on
$L_{n}$ and of its subsemigroup $\mathcal{P}_{n}$
 consisting of all maps that are increasing and isotone. 
 He proves that $|\mathcal{P}_{n}|=C(n)$ for the $n$-th Catalan number
$C(n)=\frac{\tbinom{2n}{n}}{n+1}$~\cite[Theorem 3.1]{Hi}. Then together with Corollary~\mref{cor:302}, we immediately have 
 
 \begin{lemma}
 $\omega_{n-1}^{n-1}(L_{n})=C(n)$.
 \mlabel{lem:400}
 \end{lemma}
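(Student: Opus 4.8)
The plan is to identify $\Omega_{n-1}^{n-1}(L_n)$ with Higgins' semigroup $\mathcal{P}_n$ of increasing isotone self-maps of $L_n$ and then invoke his count $|\mathcal{P}_n| = C(n)$. First I would unwind the definition: since $a_{n-1} = 1$, the set $\Omega_{n-1}^{n-1}(L_n)$ consists precisely of those $d \in \DW_{1}(L_n)$ with $d(1) = 1$. Corollary \mref{cor:302} applies to exactly this situation---$L_n$ is a chain with top element $1$ and $d(1)=1$---and tells us that membership in $\DW_{1}(L_n)$ is equivalent to $d$ being increasing and isotone. Hence $\Omega_{n-1}^{n-1}(L_n)$ equals the set of operators on $L_n$ that are increasing, isotone, and satisfy $d(1)=1$.

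The one point requiring a moment's care is that this description coincides exactly with $\mathcal{P}_n$, and not merely with a subset of it. The observation is that the constraint $d(1)=1$ is redundant once $d$ is known to be increasing: increasing means $x \leq d(x)$ for all $x$, so in particular $1 \leq d(1)$, and since $1$ is the top element this forces $d(1)=1$. Therefore every increasing isotone self-map of $L_n$ automatically lies in $\Omega_{n-1}^{n-1}(L_n)$, which together with the inclusion from the previous paragraph yields the set equality $\Omega_{n-1}^{n-1}(L_n) = \mathcal{P}_n$.

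Finally I would conclude by applying Higgins' enumeration $|\mathcal{P}_n| = C(n)$ from \cite[Theorem 3.1]{Hi}, giving $\omega_{n-1}^{n-1}(L_n) = |\mathcal{P}_n| = C(n)$. I do not expect any substantial obstacle here: the entire analytic content has been pre-packaged into Corollary \mref{cor:302} and the cited combinatorial result, so the only thing to verify by hand is the redundancy of the top-value condition described above, which is immediate from the definition of an increasing operator.
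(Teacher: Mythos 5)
Your proof is correct and follows exactly the paper's route: the paper also derives this by combining Corollary \mref{cor:302} with Higgins' count $|\mathcal{P}_n|=C(n)$. Your explicit remark that the condition $d(1)=1$ is redundant for increasing maps is a small detail the paper leaves implicit, but it is the same argument.
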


We also have 
\begin{lemma}
Let $n\geq 3$ and $d$ an operator on $L_{n}$ such that $a_{n-2}\leq d(a_{n-2})$. If $d\in \DW_{1}(L_{n})$,  then $x\leq d(x)$ for all $x\in L_{n}\backslash \{1\}$.
Furthermore, $d\in \DW_{1}(L_{n})$ if and only if $d^1\in \DW_{1}(L_{n})$ for the operator $d^1$ defined in Eq.~\meqref{eq:du}.
\mlabel{lem:401}
\end{lemma}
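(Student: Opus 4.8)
The plan is to extract the increasing property directly from the coatom hypothesis $a_{n-2}\leq d(a_{n-2})$ by a single well-chosen instance of the difference-operator identity Eq.~\eqref{eq:301}, and then to read off the ``furthermore'' clause as an application of Proposition~\ref{por:111}\,\ref{it:113}, whose standing hypothesis is precisely the increasing condition we will have established.

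For the first statement I would fix $j\in\{0,1,\dots,n-2\}$ and substitute $x=a_{n-2}$ and $y=a_j$ into Eq.~\eqref{eq:301}. Since $a_j\leq a_{n-2}$ in the chain $L_n$, the meet is $x\wedge y=a_j$, so
\[
d(a_j)=\big(d(a_{n-2})\wedge a_j\big)\vee\big(a_{n-2}\wedge d(a_j)\big)\vee\big(d(a_{n-2})\wedge d(a_j)\big).
\]
The point is that $a_j\leq a_{n-2}\leq d(a_{n-2})$ forces $d(a_{n-2})\wedge a_j=a_j$, so $a_j$ appears as a joinand on the right-hand side, whence $a_j\leq d(a_j)$. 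As $j$ is arbitrary, this gives $x\leq d(x)$ for every $x\in L_n\backslash\{1\}$ (the case $j=n-2$ being trivial by Lemma~\ref{lem:x=y}, where the identity merely returns the hypothesis).

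For the equivalence, the key observation I would use is that $d$ and $d^1$ agree on $L_n\backslash\{1\}$, so the condition ``$x\leq d(x)$ for all $x\neq 1$'' holds for $d$ if and only if it holds for $d^1$; moreover $d^1(a_{n-2})=d(a_{n-2})\geq a_{n-2}$, so $d^1$ inherits the same coatom hypothesis as $d$. For the forward direction, assuming $d\in\DW_{1}(L_{n})$, the first statement supplies the increasing condition, and Proposition~\ref{por:111}\,\ref{it:113} then yields $d^1\in\DW_{1}(L_{n})$. For the converse, assuming $d^1\in\DW_{1}(L_{n})$, I would apply the first statement to $d^1$ itself (legitimate since $a_{n-2}\leq d^1(a_{n-2})$) to get $x\leq d^1(x)=d(x)$ for all $x\neq 1$; Proposition~\ref{por:111}\,\ref{it:113} then returns $d\in\DW_{1}(L_{n})$.

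The only genuine obstacle I anticipate is spotting the correct substitution: one must take the coatom $a_{n-2}$ as the first argument so that its value $d(a_{n-2})$, known to dominate $a_{n-2}$, absorbs each $a_j$ below it and exposes $a_j$ as a joinand. After the increasing property is secured, the rest is bookkeeping, the one subtlety being that, because Proposition~\ref{por:111}\,\ref{it:113} takes that property as a hypothesis, the converse direction requires re-establishing it for $d^1$ rather than $d$; this is exactly why the first statement is invoked a second time, now applied to $d^1$.
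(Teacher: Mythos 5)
Your proposal is correct and follows essentially the same route as the paper: the increasing property is extracted from a single instance of Eq.~\eqref{eq:301} with one argument equal to $a_{n-2}$ (the paper isolates the joinand $x\wedge d(a_{n-2})$, you isolate the symmetric joinand $d(a_{n-2})\wedge a_j$ --- the same observation), and the ``furthermore'' clause is then read off from Proposition~\ref{por:111}\,\ref{it:113}. Your explicit handling of the converse direction, re-deriving the increasing condition from $d^1$ before invoking that proposition, is a point the paper leaves as ``immediate,'' and it is carried out correctly.
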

\begin{proof}
Let $d\in \DW_{1}(L_{n})$  such that $a_{n-2}\leq d(a_{n-2})$.
For each $x\in L_{n}\backslash \{1\}$, we have $x\leq a_{n-2}$, and so
$x=x\wedge d(a_{n-2})\leq d(x\wedge a_{n-2})=d(x)$ by Eq.\meqref{eq:301}.
Then the last statement follows immediately from Proposition \mref{por:111} \mref{it:113}.
\end{proof}

\begin{lemma}
For each $0\leq j\leq n-1$, denote $$A_{j}:=\{d\in\Omega_{j}^{n-1}(L_{n})~|~a_{n-2}\leq d(a_{n-2})\}.$$ 
Then
$|A_{j}|=C(n)$. In particular,
	$\omega_{n-2}^{n-1}(L_{n})=C(n)$.
	\mlabel{lem:403}
\end{lemma}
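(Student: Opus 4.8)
The plan is to count $A_j$ by matching it bijectively with the already-understood set $\Omega_{n-1}^{n-1}(L_n)$ of difference operators that fix the top, whose cardinality is $C(n)$ by Lemma~\mref{lem:400}. First I would dispose of the ``in particular'' clause, which is just the case $j=n-2$. Indeed, if $d\in\Omega_{n-2}^{n-1}(L_n)$ then $a_{n-2}=d(1)$, so $a_{n-2}\le d(1)$ and Proposition~\mref{pro:3000}~\mref{it:3001} forces $a_{n-2}\le d(a_{n-2})$; hence the extra condition defining $A_{n-2}$ is automatic and $\Omega_{n-2}^{n-1}(L_n)=A_{n-2}$, giving $\omega_{n-2}^{n-1}(L_n)=|A_{n-2}|=C(n)$ once the main claim holds.

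Next I would fix the anchor of the bijection at $j=n-1$. For $d\in\Omega_{n-1}^{n-1}(L_n)$ we have $d(1)=1$, so by Corollary~\mref{cor:302} $d$ is increasing, whence $a_{n-2}\le d(a_{n-2})$ holds automatically; therefore $A_{n-1}=\Omega_{n-1}^{n-1}(L_n)$ and $|A_{n-1}|=C(n)$ by Lemma~\mref{lem:400}. It then suffices to construct, for each $0\le j\le n-1$, a bijection between $A_j$ and $A_{n-1}$.

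The bijection should simply alter an operator's value at the top element $1$, via the construction $d\mapsto d^{u}$ of Eq.~\meqref{eq:du}. I would set $\Phi(d)=d^{1}$ with candidate inverse $\Psi(e)=e^{a_j}$. For well-definedness of $\Phi$: if $d\in A_j$ then $a_{n-2}\le d(a_{n-2})$, so Lemma~\mref{lem:401} gives $d^{1}\in\DW_{1}(L_n)$, and since $d^{1}(1)=1$ we get $d^{1}\in\Omega_{n-1}^{n-1}(L_n)=A_{n-1}$. For well-definedness of $\Psi$: given $e\in A_{n-1}$, the operator $e^{a_j}$ still satisfies $e^{a_j}(a_{n-2})=e(a_{n-2})\ge a_{n-2}$ since $e$ is increasing, so Lemma~\mref{lem:401} applies to $e^{a_j}$ and yields $e^{a_j}\in\DW_{1}(L_n)$ because $(e^{a_j})^{1}=e\in\DW_{1}(L_n)$; together with $e^{a_j}(1)=a_j$ this shows $e^{a_j}\in A_j$. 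Evaluating the composites at $1$ and off $1$ gives $(d^{1})^{a_j}=d$ (using $d(1)=a_j$) and $(e^{a_j})^{1}=e$ (using $e(1)=1$), so $\Phi$ and $\Psi$ are mutually inverse and $|A_j|=|A_{n-1}|=C(n)$.

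The one genuine subtlety — and the step I would be most careful about — is the transfer of membership in $\DW_{1}(L_n)$ across the modification at $1$, which rests entirely on the biconditional of Lemma~\mref{lem:401}; its hypothesis $a_{n-2}\le(\cdot)(a_{n-2})$ must be verified for both $d$ and $e^{a_j}$, the latter using that operators in $\Omega_{n-1}^{n-1}(L_n)$ are increasing. Since Lemma~\mref{lem:401} requires $n\ge 3$, the boundary case $n=2$ should be handled by direct inspection: there $a_{n-2}=0$, so $A_j=\Omega_j^{1}(L_2)$, and every operator on $\{0,1\}$ lies in $\DW_{1}(L_2)$, giving $|A_j|=2=C(2)$.
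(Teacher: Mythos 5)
Your proof is correct and follows essentially the same route as the paper: the bijection $d\mapsto d^{1}$ with inverse $e\mapsto e^{a_j}$ onto $\Omega_{n-1}^{n-1}(L_n)$, whose cardinality is $C(n)$ by Lemma~\mref{lem:400}, and the same observation via Proposition~\mref{pro:3000}~\mref{it:3001} for the ``in particular'' clause. The only (harmless) differences are that you invoke the biconditional of Lemma~\mref{lem:401} where the paper uses Proposition~\mref{por:111}~\mref{it:111} to justify $e^{a_j}\in\DW_1(L_n)$, and that you explicitly check the small case $n=2$, which the paper glosses over.
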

\begin{proof} 
Recall that $\Omega_{n-1}^{n-1}(L_{n})=\{d\in \DW_{1}(L_{n}) ~|~d(1)=1\}$.
For $d\in A_j$, define $f(d)=d^1$ where $d^1$ is defined in
 Eq.\meqref{eq:du}.
We show this defines a map  $f: A_{j}\to \Omega_{n-1}^{n-1}(L_{n})$. In fact, for
	each  $d\in A_{j}$, i.e, $d\in \DW_{1}(L_{n})$ such that $d(1)=a_{j}$ and $a_{n-2}\leq d(a_{n-2})$, we have
	$d^1\in \Omega_{n-1}^{n-1}(L_{n})$ by Lemma \mref{lem:401}.
	
Define  $g: \Omega_{n-1}^{n-1}(L_{n})\to A_{j}$ by
	$g(d)=d^{a_{j}}$ for all $d\in \Omega_{n-1}^{n-1}(L_{n})$, where $d^{a_{j}}$ is defined in 
	Eq.\meqref{eq:du}. Then $g$ is a map. In fact, let $d\in \Omega_{n-1}^{n-1}(L_{n})$. Then  $d(1)=1$, and so 	$g(d)=d^{a_{j}}\in \DW_{1}(L_n)$ by Proposition \mref{por:111} \mref{it:111}; and we get
	$x\leq d(x)$ for all $x\in L_n 
	 $ by Proposition \mref{pro:3000}
	 \mref{it:3003}. Thus $g(d)=d^{a_{j}}\in A_{j}$, and hence $g$ is a map.
	
Also,	it is easy to check that $fg=\mrep_{\Omega_{n-1}^{n-1}(L_{n})}$ and $gf=\mrep_{A_{j}}$.
	Thus $f$ is a bijection, and consequently
	$|A_{j}|=|\Omega_{n-1}^{n-1}(L_{n})|=\omega_{n-1}^{n-1}(L_{n})	=C(n)$ 	
	by Lemma \mref{lem:400}.
	
	In particular, 
	$A_{n-2}=\Omega_{n-2}^{n-1}(L_{n})$
	by Proposition \mref{pro:3000} \mref{it:3001}, so 	$\omega_{n-2}^{n-1}(L_{n})=C(n)$.
\end{proof}

\begin{lemma}
Let $n\geq 3$ and $d$ an operator on $L_{n}$ such that  $d(1)\neq 1$ and
$d(1)\leq d(a_{n-2})\leq a_{n-3}$. Then $d\in \DW_{1}(L_{n})$ if and only if $d|_{ L_{n-1}}\in \DW_{1}(L_{n-1})$, where
$L_{n-1}=L_{n}\backslash \{1\}=\{0, a_{1}, a_{2}, \cdots, a_{n-2}\}$ and $d|_{ L_{n-1}}$ is the restriction of $d$ to the chain $L_{n-1}$. 
	\mlabel{lem:404}
\end{lemma}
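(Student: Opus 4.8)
The plan is to use that $L_{n-1}=\{a_0,\dots,a_{n-2}\}$ is a subchain, hence a sublattice, of $L_n$, obtained by deleting only the top element $1$. By Lemma~\mref{lem:x=y} the identity~\eqref{eq:301} is automatic when $x=y$, and it is symmetric in $x,y$; so for either lattice, membership in $\DW_1$ amounts to the validity of~\eqref{eq:301} at all pairs $x<y$. Once one knows $d(L_{n-1})\subseteq L_{n-1}$, every term occurring in~\eqref{eq:301} for a pair $x,y\in L_{n-1}$ lies in $L_{n-1}$ and is computed identically there and in $L_n$. Thus the pairs $x<y$ with $y\in L_{n-1}$ contribute the same requirement to $\DW_1(L_n)$ as to $\DW_1(L_{n-1})$, and the entire lemma reduces to controlling the extra pairs $(x,1)$ with $x\in L_{n-1}$.

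For the forward implication, suppose $d\in\DW_1(L_n)$; I first check $d(L_{n-1})\subseteq L_{n-1}$. For $x<a_{n-2}$, evaluating~\eqref{eq:301} at $(x,a_{n-2})$ gives
\[
d(x)=(d(x)\wedge a_{n-2})\vee(x\wedge d(a_{n-2}))\vee(d(x)\wedge d(a_{n-2})).
\]
Each joinand is $\leq a_{n-2}$ or $\leq d(a_{n-2})\leq a_{n-3}$, hence $<1$; as a join in a chain equals its largest entry, $d(x)<1$. Combined with $d(a_{n-2})\leq a_{n-3}<1$, this gives $d(x)\neq1$ for all $x\in L_{n-1}$. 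Now $d|_{L_{n-1}}$ is a well-defined operator on the sublattice $L_{n-1}$, and restricting~\eqref{eq:301} to pairs in $L_{n-1}$ shows $d|_{L_{n-1}}\in\DW_1(L_{n-1})$.

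For the converse, assume $d|_{L_{n-1}}\in\DW_1(L_{n-1})$, which already entails $d(L_{n-1})\subseteq L_{n-1}$. At a pair $x<y$ with $y\neq1$ the identity holds by hypothesis, so only the pairs $(x,1)$ remain. There $x\wedge1=x$, and by absorption the right side of~\eqref{eq:301} simplifies to $d(x)\vee(x\wedge d(1))$; hence the identity at $(x,1)$ is equivalent to the inequality $x\wedge d(1)\leq d(x)$. To prove it, apply~\eqref{eq:301} in $L_{n-1}$ at $(x,a_{n-2})$ to obtain $x\wedge d(a_{n-2})\leq d(x)$, and then use the hypothesis $d(1)\leq d(a_{n-2})$ to get $x\wedge d(1)\leq x\wedge d(a_{n-2})\leq d(x)$. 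Thus~\eqref{eq:301} holds at every pair of $L_n$, so $d\in\DW_1(L_n)$.

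I expect the crux to be the forward-direction claim that $d$ sends no non-top element to $1$: this is where the bound $d(a_{n-2})\leq a_{n-3}$ combines with the chain-theoretic fact that a join of elements all lying below the top again lies below the top. The hypothesis $d(1)\leq d(a_{n-2})$ then plays the dual role in the converse, being exactly what renders the extra pairs $(x,1)$ harmless. The remaining steps---the sublattice bookkeeping and the one-line reduction of the $(x,1)$ identity to $x\wedge d(1)\leq d(x)$---are routine.
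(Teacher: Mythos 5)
Your proof is correct and follows essentially the same route as the paper's: the forward direction amounts to showing $d(L_{n-1})\subseteq L_{n-1}$ so that the restriction makes sense, and the converse reduces the remaining pairs $(x,1)$ to the inequality $x\wedge d(1)\leq d(x)$. Your derivations are slightly more self-contained---you read $d(x)\leq a_{n-2}$ and $x\wedge d(a_{n-2})\leq d(x)$ directly off the defining identity evaluated at $(x,a_{n-2})$, whereas the paper instead invokes Proposition~\ref{pro:3000} and handles the converse by splitting into the cases $x\leq d(a_{n-2})$ and $x>d(a_{n-2})$---but the underlying argument is the same.
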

\begin{proof}
	Let $d$ be as given in the lemma. 
	Assume that $d\in \DW_{1}(L_{n})$.
	If there exists $b\in L_{n-1}$ such that $d(b)=1$, then 
	$a_{n-2}\vee d(a_{n-2})=1$ by Proposition \mref{pro:3000} \mref{it:3006}, and so
	$d(a_{n-2})=1$, a contradiction.
	Thus $d|_{ L_{n-1}}$ is an operator on $L_{n-1}$, and hence
	 $d|_{ L_{n-1}}\in \DW_{1}(L_{n-1})$.
	
Conversely, if  $d|_{ L_{n-1}}\in \DW_{1}(L_{n-1})$, then $d(x)\leq a_{n-2}$ for all  $x\in L_{n-1}$, since $d(x)\in L_{n-1}$.
 To prove that $d\in \DW_{1}(L_{n})$, it suffices to show that $x\wedge d(1)\leq d(x)$ for all $x\in L_{n-1}$.
 
In fact, let $x\in L_{n-1}$. 
If
$x\leq d(a_{n-2})$, then, since $a_{n-2}$ is the top element of $L_{n-1}$, we have
$x\leq d(x)$  by Proposition \mref{pro:3000} \mref{it:3001}, and so $x\wedge d(1)\leq x\leq d(x)$.
If $x>d(a_{n-2})$, then	$d(x)\geq d(a_{n-2})\geq d(1)$ by Proposition \mref{pro:3000} \mref{it:3004}, and so $x\wedge d(1)\leq d(1)\leq d(x)$.	Thus  $x\wedge d(1)\leq d(x)$ for all $x\in L_{n-1}$, and hence  $d\in \DW_{1}(L_{n})$.
\end{proof}

\begin{lemma}
Let $n\geq 3$ and  $0\leq j\leq n-3$. Then $\omega_{j}^{n-1}(L_{n})=C(n)+\sum_{k=j}^{n-3}\omega_{k}^{n-2}(L_{n-1})$. In particular, 	$\omega_{n-3}^{n-1}(L_{n})=C(n)+C(n-1)$.
	\mlabel{lem:405}
\end{lemma}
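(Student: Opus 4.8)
The plan is to decompose the count $\omega_j^{n-1}(L_n)$ according to the value of $d(a_{n-2})$, using the combinatorial lemmas already established. First I would fix $d \in \DW_1(L_n)$ with $d(1) = a_j$, where $0 \le j \le n-3$, and split into two cases depending on whether $d$ moves $a_{n-2}$ up or down. The decreasing/increasing dichotomy at $a_{n-2}$ is governed by Proposition~\mref{pro:3000}~\mref{it:3001}: since $a_j = d(1)$ and $a_j \le a_{n-3} < a_{n-2}$, whenever $a_{n-2} \le d(a_{n-2})$ the operator falls into the set $A_j$ of Lemma~\mref{lem:403}, and there are exactly $C(n)$ such operators. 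This already accounts for the $C(n)$ summand.

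The remaining operators are those with $d(a_{n-2}) < a_{n-2}$, i.e.\ $d(a_{n-2}) \le a_{n-3}$. Here I would invoke Proposition~\mref{pro:3000}~\mref{it:3004}, which forces $d(1) \le d(a_{n-2})$ whenever $d(1) \le a_{n-2}$; combined with $d(a_{n-2}) \le a_{n-3}$ this puts us exactly in the hypothesis of Lemma~\mref{lem:404}. That lemma gives a bijection between such $d \in \DW_1(L_n)$ and their restrictions $d|_{L_{n-1}} \in \DW_1(L_{n-1})$. The key point is that the value $d(a_{n-2}) = a_k$ on the chain $L_{n-1}$ (whose top element is $a_{n-2}$) ranges over $a_j \le a_k \le a_{n-3}$: the lower bound is $d(1) = a_j \le d(a_{n-2})$ from the isotonicity just noted, and the upper bound is the standing assumption $d(a_{n-2}) \le a_{n-3}$. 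Summing over the allowed values of $k$ and recognizing that $d(a_{n-2})$ is the image of the top element $a_{n-2}$ of $L_{n-1}$, the number of such restrictions is precisely $\sum_{k=j}^{n-3} \omega_k^{n-2}(L_{n-1})$.

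Adding the two contributions yields $\omega_j^{n-1}(L_n) = C(n) + \sum_{k=j}^{n-3} \omega_k^{n-2}(L_{n-1})$. For the special case $j = n-3$, the sum collapses to the single term $\omega_{n-3}^{n-2}(L_{n-1})$, which by Lemma~\mref{lem:403} applied to the chain $L_{n-1}$ (where $a_{n-3}$ plays the role of the second-from-top element) equals $C(n-1)$, giving $\omega_{n-3}^{n-1}(L_n) = C(n) + C(n-1)$.

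The main obstacle I anticipate is verifying cleanly that the two cases are genuinely disjoint and exhaustive, and in particular confirming the exact range of summation. One must check carefully that in the decreasing case the hypotheses of Lemma~\mref{lem:404} are met for every admissible value of $d(a_{n-2})$ and that no operator with $d(a_{n-2}) < a_{n-2}$ can have $d(a_{n-2}) < a_j = d(1)$, which is where Proposition~\mref{pro:3000}~\mref{it:3004} is essential. The bijection of Lemma~\mref{lem:404} then translates the count on $L_n$ into a count on $L_{n-1}$ indexed by the value at the new top element, and it is this reindexing that produces the telescoping sum.
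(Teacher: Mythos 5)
Your proposal is correct and follows essentially the same route as the paper: the same case split on whether $d(a_{n-2})\geq a_{n-2}$ or $d(a_{n-2})\leq a_{n-3}$, with Lemma~\mref{lem:403} supplying the $C(n)$ term, Proposition~\mref{pro:3000}~\mref{it:3004} pinning the range $a_j\leq d(a_{n-2})\leq a_{n-3}$, and the restriction bijection of Lemma~\mref{lem:404} producing the sum $\sum_{k=j}^{n-3}\omega_k^{n-2}(L_{n-1})$. The special case $j=n-3$ is handled identically via $\omega_{n-3}^{n-2}(L_{n-1})=C(n-1)$.
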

\begin{proof}
	Let $0\leq j\leq n-3$ and	$d\in\Omega_{j}^{n-1}(L_{n})$, i.e,
	$d\in \DW_{1}(L_{n})$ with $d(1)=a_{j}$. Then $d(1)\leq a_{n-2}$, and so
	$d(1)\leq d(a_{n-2})$ by Proposition \mref{pro:3000} \mref{it:3004}.
	
If $d(a_{n-2})\geq a_{n-2}$, then $d$ has $C(n)$ choices by Lemma \mref{lem:403}.
If $d(a_{n-2})\leq a_{n-3}$, then $d$ has $\omega_{k}^{n-2}(L_{n-1})$ choices by Lemma \mref{lem:404}, where $d(1)=a_{j}\leq d(a_{n-2})=a_{k}\leq a_{n-3}$.
Thus we conclude that $\omega_{j}^{n-1}(L_{n})=C(n)+\sum_{k=j}^{n-3}\omega_{k}^{n-2}(L_{n-1})$.

In particular, we have
$\omega_{n-3}^{n-1}(L_{n})=C(n)+\omega_{n-3}^{n-2}(L_{n-1})=C(n)+C(n-1)$ by Lemma \mref{lem:403}.
\end{proof}

 \begin{proposition}
	\mlabel{pro:402}
Let $n\geq 3$ and  $0\leq j\leq n-3$. Then
	\begin{equation}
	\mlabel{eq:400}	
\omega_{j}^{n-1}(L_{n})=C(n)+\sum_{k=1}^{n-2-j}\tbinom{n-2-j}{k}C(n-k). 
	\end{equation}	
\end{proposition}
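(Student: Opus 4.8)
The plan is to prove Eq.~\meqref{eq:400} by induction on $n$, using the recursion of Lemma~\mref{lem:405} as the engine and collapsing everything to a binomial identity. The base case is $n=3$, where the constraint $0\leq j\leq n-3$ forces $j=0$; there the claimed value $C(3)+\tbinom{1}{1}C(2)=C(3)+C(2)$ is exactly the ``in particular'' statement recorded at the end of Lemma~\mref{lem:405}. For the inductive step (with $n\geq 4$, so that $L_{n-1}$ has at least three elements), I assume Eq.~\meqref{eq:400} for the shorter chain $L_{n-1}$, i.e.\ for all $0\leq k\leq n-4$,
\[
\omega_{k}^{n-2}(L_{n-1})=C(n-1)+\sum_{l=1}^{n-3-k}\tbinom{n-3-k}{l}C(n-1-l),
\]
and I note separately that $\omega_{n-3}^{n-2}(L_{n-1})=C(n-1)$ by Lemma~\mref{lem:403} applied to $L_{n-1}$ (this is also the empty-sum value of the displayed formula at the boundary index $k=n-3$). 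The starting point is then Lemma~\mref{lem:405}, namely $\omega_{j}^{n-1}(L_{n})=C(n)+\sum_{k=j}^{n-3}\omega_{k}^{n-2}(L_{n-1})$.

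Next I substitute the inductive values into the sum and simplify. Each of the $n-2-j$ summands contributes a constant $C(n-1)$, giving a total of $(n-2-j)C(n-1)$, while the remaining double sum is $\sum_{k=j}^{n-3}\sum_{l=1}^{n-3-k}\tbinom{n-3-k}{l}C(n-1-l)$. Reindexing the outer sum by $m=n-3-k$ turns this into
\[
\sum_{m=0}^{n-3-j}\sum_{l=1}^{m}\tbinom{m}{l}C(n-1-l),
\]
and swapping the order of summation so that $1\leq l\leq n-3-j$ and $l\leq m\leq n-3-j$ isolates the inner sum $\sum_{m=l}^{n-3-j}\tbinom{m}{l}$, which collapses by the hockey-stick identity $\sum_{m=l}^{M}\tbinom{m}{l}=\tbinom{M+1}{l+1}$ to $\tbinom{n-2-j}{l+1}$.

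Finally I recombine. After re-indexing $k=l+1$, the double sum becomes $\sum_{k=2}^{n-2-j}\tbinom{n-2-j}{k}C(n-k)$; since $(n-2-j)C(n-1)=\tbinom{n-2-j}{1}C(n-1)$ is precisely the missing $k=1$ term, adding the two pieces yields $\sum_{k=1}^{n-2-j}\tbinom{n-2-j}{k}C(n-k)$, and prepending $C(n)$ delivers Eq.~\meqref{eq:400}.

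I expect no genuinely hard step: the entire content is the combinatorial simplification of an already-established recursion, so the main obstacle is purely bookkeeping. The delicate points are getting the index ranges correct after the substitution $m=n-3-k$, justifying the interchange of summation, and invoking the hockey-stick identity in the right form. A minor subtlety worth flagging explicitly is the boundary index $k=n-3$, where the interior sum in the inductive formula is empty; here Lemma~\mref{lem:403} is exactly what guarantees $\omega_{n-3}^{n-2}(L_{n-1})=C(n-1)$, so that the closed form and the recursion remain consistent at the edge of the summation range.
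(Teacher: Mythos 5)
Your proof is correct, and it reaches Eq.~\meqref{eq:400} by a genuinely different (though closely related) route from the paper's. The paper inducts on $t=n-2-j$, simultaneously decreasing $j$ and $n$: it uses Lemma~\mref{lem:405} to extract the two-term recurrence $\omega_{n-3-q}^{n-1}(L_{n})=\omega_{n-2-q}^{n-1}(L_{n})+\omega_{n-3-q}^{n-2}(L_{n-1})$ and then merges the two inductive closed forms with Pascal's rule $\tbinom{q}{k}+\tbinom{q}{k-1}=\tbinom{q+1}{k}$, one step at a time. You instead induct on $n$ alone, substitute the full closed form for $L_{n-1}$ into the one-shot recursion $\omega_{j}^{n-1}(L_{n})=C(n)+\sum_{k=j}^{n-3}\omega_{k}^{n-2}(L_{n-1})$, and collapse the resulting double sum with an interchange of summation and the hockey-stick identity $\sum_{m=l}^{M}\tbinom{m}{l}=\tbinom{M+1}{l+1}$ (which is, of course, iterated Pascal). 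Your bookkeeping checks out: the base case $n=3$ is exactly the ``in particular'' clause of Lemma~\mref{lem:405}, the boundary term $\omega_{n-3}^{n-2}(L_{n-1})=C(n-1)$ is correctly supplied by Lemma~\mref{lem:403} rather than the inductive hypothesis, the reindexing $m=n-3-k$ and the swap of finite sums are valid, and the leftover $(n-2-j)C(n-1)$ is precisely the missing $k=1$ term. What your version buys is a cleaner logical structure (a single induction variable and a single application of the recursion per step, with all the combinatorics isolated in one identity); what the paper's version buys is that each inductive step is a two-line computation requiring only Pascal's rule, at the cost of a slightly more intricate induction scheme.
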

\begin{proof}
	Let  $0\leq j\leq n-3$ and $t=n-2-j$. We prove it on $t$ by induction. 
	
When $t=1$, i.e, $j=n-3$, we know from Lemma \mref{lem:405} that $\omega_{n-3}^{n-1}(L_{n})=C(n)+C(n-1)$. Thus Eq.\meqref{eq:400} holds for $t=1$.		
 Suppose that Eq.\meqref{eq:400} holds for $t=q$, i.e,
 	\begin{equation}
\omega_{n-2-q}^{n-1}(L_{n})=C(n)+\sum_{k=1}^{q}\tbinom{q}{k}C(n-k).
 \mlabel{eq:4000}	
 \end{equation}
 Then we also have
	\begin{equation}
\omega_{n-3-q}^{n-2}(L_{n-1})=C(n-1)+\sum_{k=1}^{q}\tbinom{q}{k}C(n-1-k).
\mlabel{eq:4001}	
\end{equation} 

We will show that  Eq.\meqref{eq:400} holds for $t=q+1$, i.e, 
  for $j=n-3-q$.
 In fact, we have  by Lemma \mref{lem:405} that
 $\omega_{n-2-q}^{n-1}(L_{n})=C(n)+\sum_{k=n-2-q}^{n-3}\omega_{k}^{n-2}(L_{n-1})$,
 which, by using Lemma \mref{lem:405} again and the inductive assumption, implies that
 \begin{eqnarray*}
 \omega_{n-3-q}^{n-1}(L_{n})
 &=&C(n)+\sum_{k=n-3-q}^{n-3}\omega_{k}^{n-2}(L_{n-1})\\
 &=& (C(n)+\sum_{k=n-2-q}^{n-3}\omega_{k}^{n-2}(L_{n-1}))+\omega_{n-3-q}^{n-2}(L_{n-1})\\
 &=&\omega_{n-2-q}^{n-1}(L_{n})+\omega_{n-3-q}^{n-2}(L_{n-1})\\
 &=& (C(n)+\sum_{k=1}^{q}\tbinom{q}{k}C(n-k))+(C(n-1)+\sum_{k=1}^{q}\tbinom{q}{k}C(n-1-k)) \quad (\textrm{by Eqs. \meqref{eq:4000}  and \meqref{eq:4001}})\\
 &=&C(n)+\sum_{k=1}^{q}(\tbinom{q}{k}+\tbinom{q}{k-1})C(n-k))+C(n-1-q)\\
 &=&C(n)+\sum_{k=1}^{q+1}\tbinom{q+1}{k}C(n-k)\quad\quad
 (\textrm{by }\tbinom{q}{k}+\tbinom{q}{k-1})
=\tbinom{q+1}{k})\\
&=&C(n)+\sum_{k=1}^{n-2-(n-3-q)}\tbinom{n-2-(n-3-q)}{k}C(n-k).
 \end{eqnarray*}	
Thus Eq.\meqref{eq:400} holds for $t=q+1$, and so  Eq.\meqref{eq:400} holds for all $0\leq j\leq n-3$ by induction. 
\end{proof}

\begin{theorem}
Let $n$ be a positive integer. Then
$$
|\DW_{1}(L_{n})|=
\begin{cases}
nC(n),  & \textrm{if}~ n\in \{1, 2\}; \\
nC(n)+\sum_{k=1}^{n-2}
\sum_{p=k}^{n-2}\tbinom{p}{k}C(n-k),  & \textrm{if}~ n \geq 3.
\end{cases}
$$		
	\mlabel{the:400}
\end{theorem}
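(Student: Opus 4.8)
The plan is to stratify $\DW_{1}(L_{n})$ according to the value $d(1)=a_{j}$. Since every $d\in \DW_{1}(L_{n})$ determines a unique index $j\in\{0,1,\ldots,n-1\}$ with $d(1)=a_{j}$, the sets $\Omega_{j}^{n-1}(L_{n})$ partition $\DW_{1}(L_{n})$, so that
$$|\DW_{1}(L_{n})|=\sum_{j=0}^{n-1}\omega_{j}^{n-1}(L_{n}).$$
All the genuine counting has already been carried out in the preceding lemmas; what remains is to substitute the known values of the summands and perform a combinatorial reindexing.

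For $n\geq 3$ I would split this sum into three pieces: the two top terms $\omega_{n-1}^{n-1}(L_{n})$ and $\omega_{n-2}^{n-1}(L_{n})$, each equal to $C(n)$ by Lemmas \mref{lem:400} and \mref{lem:403}, and the remaining block $\sum_{j=0}^{n-3}\omega_{j}^{n-1}(L_{n})$ governed by Proposition \mref{pro:402}. Expanding Eq.\meqref{eq:400} and noting that there are $n-2$ indices $j$ in the range gives
$$\sum_{j=0}^{n-3}\omega_{j}^{n-1}(L_{n})=(n-2)C(n)+\sum_{j=0}^{n-3}\sum_{k=1}^{n-2-j}\tbinom{n-2-j}{k}C(n-k).$$
Adding the two top terms contributes a further $2C(n)$, so the coefficient of $C(n)$ becomes $2+(n-2)=n$, which produces the leading term $nC(n)$.

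The reindexing is the only step requiring attention. In the remaining double sum I would substitute $p=n-2-j$, so that as $j$ runs over $0,1,\ldots,n-3$ the new index $p$ runs over $1,2,\ldots,n-2$, turning the sum into $\sum_{p=1}^{n-2}\sum_{k=1}^{p}\tbinom{p}{k}C(n-k)$. Swapping the order of summation — for each fixed $k\in\{1,\ldots,n-2\}$ the index $p$ ranges over $k,k+1,\ldots,n-2$ — yields exactly $\sum_{k=1}^{n-2}\sum_{p=k}^{n-2}\tbinom{p}{k}C(n-k)$, which is the claimed expression.

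Finally, the cases $n\in\{1,2\}$ I would dispatch directly: $L_{1}$ has a single element whose unique self-map trivially satisfies Eq.\meqref{eq:301}, so $|\DW_{1}(L_{1})|=1=1\cdot C(1)$; and for $L_{2}=\{0<1\}$ one checks via Lemma \mref{lem:x=y} that the only nontrivial instance of Eq.\meqref{eq:301}, namely $x=0$ and $y=1$, holds automatically (both sides reduce to $d(0)$), so all four self-maps of $L_{2}$ lie in $\DW_{1}(L_{2})$ and $|\DW_{1}(L_{2})|=4=2\cdot C(2)$. The main obstacle is purely bookkeeping: ensuring the index substitution and the summation swap are executed consistently so that the triangular sum matches the stated double sum.
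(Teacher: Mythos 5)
Your proposal is correct and follows essentially the same route as the paper: decompose $|\DW_{1}(L_{n})|$ as $\sum_{j=0}^{n-1}\omega_{j}^{n-1}(L_{n})$ and substitute the values from Lemmas \mref{lem:400}, \mref{lem:403} and Proposition \mref{pro:402}; you merely spell out the reindexing $p=n-2-j$ and the summation swap (both executed correctly) and the direct check for $n\in\{1,2\}$, which the paper leaves implicit.
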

\begin{proof}
Since $|\DW_{1}(L_{n})|=\sum_{j=0}^{n-1}
\omega_{j}^{n-1}(L_{n})$,
	it follows immediately from Lemma \mref{lem:400}, Lemma \mref{lem:403} and Proposition \mref{pro:402}.
\end{proof}

We list the first few terms of $|\DW_{1}(L_{n})|$ for $1\leq n\leq 10$.
Recall that the corresponding Catalan numbers $C(n)$ are $1, 2, 5, 14, 42, 132, 429$, $1430$, $4862, 16796$.
Then Theorem~\mref{the:400} gives 
\smallskip 

\begin{center}
\begin{tabular}{|c|c|c|c|c|c|c|c|c|c|c|}
	\hline $n$ & 1 &2 & 3 & 4 &5 & 6 & 7&8 &9 &10\\
	\hline $|\DW_{1}(L_{n})|$ & 1 &4 & 17 & 73 &316 & 1379 & 6065&26870&119848&537877\\
	\hline
\end{tabular}
\end{center}
\smallskip 
This integer sequence is not in the database OEIS~\mcite{OEIS}. 

From the above table, we know that
 $\DW_{1}(L)=\{d ~|~d$ is an operator on $L\}$ if $L$ is a lattice with $|L|\leq 2$.
As shown below, these are the only cases when this equality hold. 
\begin{proposition}
	Let $L$ be a lattice. Then $\DW_{1}(L)=\{d ~|~d$ is an operator on $L\}$ if and only if $|L|\leq 2$.
	\mlabel{pro:410}
\end{proposition}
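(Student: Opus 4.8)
The statement is an equivalence, and the plan is to dispatch the two directions separately, with the essential content residing in the contrapositive of the forward direction. The implication $|L|\le 2\Rightarrow \DW_{1}(L)=\{d\mid d\text{ is an operator on }L\}$ is a short direct verification. When $|L|=1$ every pair $(x,y)$ satisfies $x=y$, so Eq.~\meqref{eq:301} holds by Lemma~\mref{lem:x=y}. When $|L|=2$, writing $L=\{0,1\}$ with $0<1$, the right-hand side of Eq.~\meqref{eq:301} is symmetric in $x$ and $y$, so by Lemma~\mref{lem:x=y} it remains only to test the pair $(0,1)$: there the left-hand side is $d(0)$, while the right-hand side collapses to $(d(0)\wedge 1)\vee(0\wedge d(1))\vee(d(0)\wedge d(1))=d(0)$ since $d(0)\wedge d(1)\le d(0)$. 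Thus every operator on the two-element lattice lies in $\DW_{1}(L)$, consistent with the count $|\DW_{1}(L_{2})|=2C(2)=4$ from Theorem~\mref{the:400}.

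For the converse I would prove the contrapositive: if $|L|\ge 3$, then some operator violates Eq.~\meqref{eq:301}. The guiding observation is that a \emph{single} bad pair $(x,y)$ already disqualifies $d$, so I am free to take $d$ equal to the identity everywhere except at one or two chosen points and engineer one instance of the defect. I split according to whether $L$ is totally ordered. First suppose $L$ has two incomparable elements $x,y$ and set $m:=x\wedge y$; incomparability forces $m<x$ and $m<y$, hence $m\notin\{x,y\}$. Define $d$ to be the identity on $L$ except $d(m):=x$. Then for the pair $(x,y)$ the left-hand side of Eq.~\meqref{eq:301} equals $d(m)=x$, whereas the right-hand side is $(x\wedge y)\vee(x\wedge y)\vee(x\wedge y)=m\ne x$, so $d\notin\DW_{1}(L)$.

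It remains to treat a chain $L$ with $|L|\ge 3$; here the previous construction breaks down, since every meet $x\wedge y$ coincides with one of $x,y$ and the middle element can never be separated from the arguments in that fashion. Instead I would choose three elements $x<y<z$ and define $d$ to be the identity except $d(x):=z$ and $d(y):=x$. Testing the pair $(x,y)$, whose meet is $x$, the left-hand side is $d(x)=z$, while the right-hand side is $(z\wedge y)\vee(x\wedge x)\vee(z\wedge x)=y\vee x\vee x=y\ne z$, again contradicting Eq.~\meqref{eq:301}. The main obstacle is exactly this bifurcation: the clean ``collapse the meet'' counterexample is available only when incomparable elements exist, and the chain case requires a genuinely different gadget exploiting an upward jump. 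Checking that these two cases exhaust all lattices with at least three elements, and that each construction reliably produces a violating pair, is the crux of the argument; once established, it yields $|L|\ge 3\Rightarrow \DW_{1}(L)\ne\{d\mid d\text{ is an operator on }L\}$, completing the proof.
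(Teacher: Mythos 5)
Your proof is correct and follows essentially the same strategy as the paper: verify the $|L|\le 2$ direction directly, then for $|L|\ge 3$ split into the chain and non-chain cases and exhibit an explicit operator violating Eq.~\meqref{eq:301} in each. The particular counterexample operators differ slightly from the paper's (the paper swaps $a$ and $c$ on a chain and uses $d(x)=a$ for $x\le a$, $d(x)=b$ otherwise in the incomparable case), but both constructions are valid and the computations check out.
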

\begin{proof}
It suffices to show that $\DW_{1}(L)\neq\{d ~|~d$ is an operator on $L\}$ when
$|L|\geq 3$. In fact, let $|L|\geq 3$.
If $L$ is a chain, then there 
exist $a, b, c\in L$ such that $a<b<c$. Define an operator $\mathbb{D}: L\to L$ by
$$
\mathbb{D}(x):=
\begin{cases}
c,  & \textrm{if}~ x= a; \\
a,  & \textrm{if}~ x= c; \\
x,  & \textrm{otherwise}.
\end{cases}
$$	
Then $\mathbb{D}\not\in \DW_{1}(L) $, since $\mathbb{D}(a\wedge b)=\mathbb{D}(a)=c\neq b=(\mathbb{D}(a)\wedge b)\vee (a\wedge \mathbb{D}(b))\vee (\mathbb{D}(a)\wedge \mathbb{D}(b))$.

If $L$ is not a chain, then there exist $a, b\in L$ such that $a$ and $b$ are incomparable. Define an operator $d: L\to L$ by
$$
d(x):=
\begin{cases}
a,  & \textrm{if}~ x\leq a; \\
b,  & \textrm{otherwise}.
\end{cases}
$$	
Then $d\not\in \DW_{1}(L) $ since $d(a\wedge b)=a\neq a\wedge b=(d(a)\wedge b)\vee (a\wedge d(b))\vee (d(a)\wedge d(b))$.
\end{proof}

\section{Difference operators on quasi-antichains}
\mlabel{sec:qua}

This section classifies and enumerates \diffcs on quasi-antichains $M_{n-2}, n\geq 4$, with $n-2$ atoms. The classification for $M_2$ will be given with the enumerations in Theorem~\mref{the:414}. 
For $M_{n-2}, n\geq 4$, we will distinguish the three cases when the \diffc $d$ satisfies $d(0)=0$, $d(0)=1$ or $d(0)\neq 0,1$. 

The classification of the \diffcs in the case when $d(0)=0$ is obtained in Proposition~\mref{pro:500} in Section~\mref{sss:d=0}. The classifications of the other two cases are more involved. So we first present several families of \diffcs in these two cases in Sections~\mref{sss:d=1} and \mref{sss:dother} respectively. We then achieve the complete classifications in these two cases in Sections~\mref{sss:cd=1} and \mref{sss:cdother} respectively. The enumeration of the \diffcs on $M_{n-2}$ is given in Section~\mref{sss:enum}. 

\subsection{Families of \diffcs on a  quasi-antichain}
\mlabel{ss:special}
We will carry out the study in three parts. 
\subsubsection{Notations and the classification of \diffcs when $d(0)=0$}
\mlabel{sss:d=0}
A {\bf quasi-antichain}~\cite{Br} is a lattice consisting of a top element $1$, a bottom element $0$, and the atoms of the lattice. A $n$-element quasi-antichain is a chain if $n$ is $2$ or $3$. 
When $n\geq 4$, the
$n$-element quasi-antichain  $M_{n-2}=\{0, b_{1}, b_{2}, \cdots, b_{n-2}, 1\}$ with $n-2$ atoms  has Hasse diagram

\begin{center}
	\setlength{\unitlength}{0.7cm}
	\begin{picture}(4,5)
	\thicklines
	\put(2.0,1.0){\line(-2,1){2.5}}
	\put(2.0,1.0){\line(-1,1){1.2}}
	\put(2.0,1.0){\line(0,1){1.2}}
	\put(2.0,1.0){\line(1,1){1.2}}
	\put(2.0,1.0){\line(2,1){2.5}}
	\put(2.0,1.0){\line(4,1){4.6}}
	\put(2.0,3.5){\line(-2,-1){2.5}}
	\put(2.0,3.5){\line(-1,-1){1.2}}
	\put(2.0,3.5){\line(0,-1){1.2}}
	\put(2.0,3.5){\line(1,-1){1.2}}
	\put(2.0,3.5){\line(2,-1){2.5}}
	\put(2.0,3.5){\line(4,-1){4.6}}
	\put(2.0,0.5){$0$}
	\put(-1.2,2.1){$b_{1}$}
	\put(0.1,2.1){$b_{2}$}
	\put(1.3,2.1){$b_{3}$}
	\put(2.5,2.1){$b_{4}$}
	\put(3.4,2.1){$\cdots$}
	\put(5.0,2.1){$\cdots$}
	\put(6.9,2.1){$b_{n-2}$}
	\put(2.0,3.7){$1$}
	\put(1.9,0.9){$\bullet$}
	\put(-0.6,2.1){$\bullet$}
	\put(0.6,2.1){$\bullet$}
	\put(1.9,2.1){$\bullet$}
	\put(3.1,2.1){$\bullet$}
	\put(4.3,2.1){$\bullet$}
	\put(6.5,2.1){$\bullet$}
	\put(0.5,0.0){Quasi-antichain $M_{n-2}$}
	\end{picture}
\end{center}
It is clear that $M_{2}$ is the $4$-element Boolean algebra.
In this section, unless otherwise statement,  $M$ will denote a quasi-antichain with $|M|\geq 4$. 

Let us first consider \diffcs $d$ on $M$ such that $d(0)=0$.
\begin{proposition}
The set $\DO(M)$ coincides with the subset of $\DW_1(M)$ with $d(0)=0$, that is, $\DO(M)=\{d\in \DW_{1}(M)~|~d(0)=0\}$.	
\mlabel{pro:500}
\end{proposition}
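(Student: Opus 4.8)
The plan is to show set equality $\DO(M)=\{d\in \DW_{1}(M)~|~d(0)=0\}$ by proving two inclusions, and the key leverage is that on a quasi-antichain the difference operator identity in Eq.~\meqref{eq:301} becomes very rigid because almost all pairs of distinct elements meet at $0$. First I would establish the easy inclusion $\subseteq$: if $d\in\DO(M)$, then by Proposition~\mref{pro:301}~\mref{it:3011} we already know $d\in\DW_1(M)$, and by Lemma~\mref{pro:201}~\mref{it:2011} $d$ is decreasing so $d(0)=0$ since $0$ is the bottom element. Thus $\DO(M)\subseteq\{d\in\DW_1(M)~|~d(0)=0\}$ with essentially no work.

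For the reverse inclusion, suppose $d\in\DW_1(M)$ with $d(0)=0$. By Proposition~\mref{pro:301}~\mref{it:3011} it suffices to prove that $d$ is decreasing, i.e.\ $d(x)\leq x$ for all $x\in M$; once decreasing, membership in $\DO(M)$ follows immediately. The plan is to check $d(x)\leq x$ case by case on the structure of $M=\{0,b_1,\dots,b_{n-2},1\}$. The value $d(0)=0\leq 0$ is given. For $x=1$ the bound $d(1)\leq 1$ is automatic. The real content is the atoms: I would show $d(b_i)\leq b_i$, i.e.\ $d(b_i)\in\{0,b_i\}$, for each atom $b_i$.

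The main obstacle, and the crux of the argument, is ruling out the possibilities $d(b_i)=1$ and $d(b_i)=b_j$ with $j\neq i$. Here I would feed a well-chosen pair into Eq.~\meqref{eq:301}. For two distinct atoms $b_i\neq b_j$ we have $b_i\wedge b_j=0$, so
$$0=d(0)=d(b_i\wedge b_j)=(d(b_i)\wedge b_j)\vee(b_i\wedge d(b_j))\vee(d(b_i)\wedge d(b_j)).$$
Since a join of elements of $M$ equals $0$ only when every joinand is $0$, this forces $d(b_i)\wedge b_j=0$, $b_i\wedge d(b_j)=0$, and $d(b_i)\wedge d(b_j)=0$ for all $j\neq i$. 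From $d(b_i)\wedge b_j=0$ holding for every atom $b_j$ with $j\neq i$, and noting that any element above some atom $b_j$ ($j\neq i$) would violate this, I can conclude $d(b_i)$ lies below at most the atom $b_i$ itself; in particular $d(b_i)\neq 1$ (as $1\wedge b_j=b_j\neq 0$) and $d(b_i)\neq b_j$ for $j\neq i$. Hence $d(b_i)\in\{0,b_i\}$, giving $d(b_i)\leq b_i$. The only delicate point is handling $M_2$ or very small quasi-antichains where there may be too few atoms to run the pairing argument, but the hypothesis $|M|\geq 4$ guarantees at least two atoms, and when there is exactly one atom the relevant chain case can be dispatched directly or excluded by the standing convention. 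Having verified $d$ is decreasing on all of $M$, Proposition~\mref{pro:301}~\mref{it:3011} yields $d\in\DO(M)$, completing the reverse inclusion and the proof.
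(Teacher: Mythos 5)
Your proposal is correct and follows essentially the same route as the paper: the forward inclusion via Proposition~\mref{pro:301}~\mref{it:3011} and Lemma~\mref{pro:201}~\mref{it:2011}, and the reverse inclusion by feeding a pair of distinct atoms into Eq.~\meqref{eq:301} to force $d(b_i)\wedge b_j=0$ (the paper phrases this as $b=d(a)\wedge b\leq d(a\wedge b)=d(0)=0$ for a suitable atom $b$), concluding $d$ is decreasing and invoking Proposition~\mref{pro:301}~\mref{it:3011} again. Your concern about too few atoms is moot since the section's standing convention is $|M|\geq 4$, guaranteeing at least two atoms.
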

\begin{proof}
If  $d\in \DO(M)$, then  $d\in \DW_{1}(M)$ by Proposition \mref{pro:301} \mref{it:3011}, and
$d(0)=0$ by Lemma \mref{pro:201} \mref{it:2011}.

Conversely, if $d\in \DW_{1}(M)$ such that $d(0)=0$, then $d(x)\leq x$ for all $x\in M$. In fact, we have $d(0)\leq 0$ and $d(1)\leq 1$.
If there exsits $a\in  M\backslash \{0, 1\}$ 
such that $d(a)\not\leq a$, then
$d(a)=1$ or
$d(a)=b$ for some $b\in  M\backslash \{0, a, 1\}$, which implies  by Eq.\meqref{eq:301} that
$b=d(a)\wedge b\leq d(a\wedge b)=d(0)=0$, a contradiction. Thus
$d(x)\leq x$ for all $x\in M$, and hence $d\in \DO(M)$ by Proposition \mref{pro:301} \mref{it:3011}.	
\end{proof}

\subsubsection{Families of \diffcs when $d(0)=1$}
\mlabel{sss:d=1}
We next consider the case of $d\in \DW_1(M)$ when $d(0)=1$. 
If $d(1)=1$ also, then $d$ is the constant operator $\textbf{C}_{(1)}$ by Proposition \mref{pro:001}.
The case when $d(1)\neq 1$ are more complicated. We will give several classes of \diffcs in
Lemmas \mref{lem:501} and \mref{lem:506} before arriving at a classification in Theorem~\mref{pro:502}.

\begin{lemma}\mlabel{lem:501}
\begin{enumerate}
	\item 
	For $b\in M\backslash\{0, 1\}$,
	$u\in M\backslash\{0, b, 1\}$ and $v\in M\backslash\{0, u, 1\}$,
	define an operator $\eta_{u\to v}$  on $M$ by
	$$
	\eta_{u\to v}(x):=
	\begin{cases}
	b,  & \textrm{if}~ x= 1; \\
	v,  & \textrm{if}~ x=u; \\
	1,  & \textrm{otherwise}.
	\end{cases}
	$$			
	Then
	$\eta_{u \to v}$ is in $\DW_{1}(M)$. In particular, if $v=b$, then $\beta_{b}$ is in $\DW_{1}(M)$, where $\beta_{b}$ is defined on $M$ by
	$$
	\beta_b(x):=
	\begin{cases}
	b,  & \textrm{if}~ x\in \{u, 1\};\\ 
	1,  & \textrm{otherwise}.
	\end{cases}
	$$ \mlabel{it:5011}
	
\item 		Let $M$ be a quasi-antichain with
$|M|\geq 5$, and let
$b, u$ and $v$ be mutually distinct elements in $ M\backslash\{0, 1\}$.
Define an operator $\gamma^{u \leftrightarrow v}$  on $M$ by
$$
\gamma^{u \leftrightarrow v}(x):=
\begin{cases}
b,  & \textrm{if}~ x= 1; \\
v,  & \textrm{if}~ x=u; \\
u,  & \textrm{if}~ x=v; \\
1,  & \textrm{otherwise}.
\end{cases}
$$			
Then
$\gamma^{u \leftrightarrow v}$ is in $\DW_{1}(M)$. \mlabel{it:5012}
\end{enumerate}	
			
\end{lemma}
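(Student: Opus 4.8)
The plan is to verify the difference operator identity in Eq.~\meqref{eq:301} directly by a case analysis on the pair $(x,y)$, exploiting the very rigid structure of a quasi-antichain: any two distinct atoms meet at $0$, every atom meets $1$ at itself, and $0$ meets everything at $0$. By Lemma~\mref{lem:x=y} we may assume $x\neq y$, and since $\wedge$ is symmetric we may assume a convenient ordering of the pair. The key observation driving all the bookkeeping is that for these operators the ``generic'' output is $1$: both $\eta_{u\to v}$ and $\gamma^{u\leftrightarrow v}$ send $1\mapsto b$ and send every element outside a small exceptional set to $1$, so in most cases the right-hand side of Eq.~\meqref{eq:301} collapses to a join involving $1$.

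For part~\mref{it:5011}, I would organize the cases according to where $x,y$ fall relative to the exceptional points $1$ and $u$. The decisive computation is $x\wedge y$: if neither $x$ nor $y$ equals $1$, then $x\wedge y$ is either $0$ (two distinct atoms, or an atom and $0$) so $\eta_{u\to v}(x\wedge y)=\eta_{u\to v}(0)=1$; one then checks the right-hand side also yields $1$, which is automatic whenever at least one of $\eta_{u\to v}(x),\eta_{u\to v}(y)$ equals $1$. The only genuinely constrained subcase is when $x=1$ and $y=u$ (so $x\wedge y=u$, with $\eta_{u\to v}(u)=v$): here the left side is $v$ and I must confirm $(b\wedge u)\vee(1\wedge v)\vee(b\wedge v)=v$, which holds because $b\wedge u=0$ (distinct atoms), $1\wedge v=v$, and $b\wedge v\leq v$. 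The remaining subcase $x=1$, $y\notin\{u,1\}$ gives left side $\eta_{u\to v}(y)=1$ and right side $(b\wedge y)\vee(1\wedge 1)\vee(b\wedge 1)=1$. The specialization to $\beta_b$ is immediate by setting $v=b$.

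For part~\mref{it:5012}, the structure is the same but with two swapped points $u\leftrightarrow v$ rather than one, so I would add the symmetric subcases involving $v$. The only new constrained subcases are $x=1,y=u$ (left side $v$, right side $(b\wedge u)\vee(1\wedge v)\vee(b\wedge v)=v$ as before) and $x=1,y=v$ (left side $u$, right side $(b\wedge v)\vee(1\wedge u)\vee(b\wedge u)=u$ by the same meet computations), together with $x=u,y=v$, where $x\wedge y=0$ forces the left side to be $\gamma^{u\leftrightarrow v}(0)=1$ and the right side $(v\wedge v)\vee(u\wedge u)\vee(v\wedge u)=u\vee v\vee 0=1$ since any two distinct atoms join to $1$. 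Note the hypothesis $|M|\geq 5$ is exactly what guarantees that $b,u,v$ can be chosen mutually distinct so that all these meets of distinct atoms are genuinely $0$.

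The main obstacle is not conceptual but combinatorial: ensuring the case split is exhaustive and that I have correctly identified which element $x\wedge y$ is in each regime, since a single misclassified meet (e.g.\ overlooking that $b$, $u$, $v$ are pairwise incomparable atoms whose meets vanish and whose joins are $1$) would break the identity. I would therefore fix once and for all the facts $b\wedge u=b\wedge v=u\wedge v=0$ and $u\vee v=1$ for distinct atoms, and then tabulate the finitely many pairs $(x,y)$ with $x\in\{1,u,v\}$ or $y\in\{1,u,v\}$, treating all other pairs uniformly via the ``output is $1$'' collapse.
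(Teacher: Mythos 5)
Your proposal is correct and follows essentially the same route as the paper: reduce to $x\neq y$ via Lemma~\mref{lem:x=y}, then verify Eq.~\meqref{eq:301} by a case split on whether $x\wedge y=0$ or one of $x,y$ equals $1$, using the quasi-antichain facts that distinct atoms meet at $0$ and join to $1$ (equivalently, $x\vee\eta_{u\to v}(x)=1$ for all $x$, which is the form the paper uses to collapse the right-hand side to $1$). The only difference is presentational: the paper treats $\eta_{u\to v}$ and $\gamma^{u\leftrightarrow v}$ uniformly in the $x\wedge y\neq 0$ case, while you enumerate the exceptional points explicitly; both are sound.
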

\begin{proof}
	Let $b, u, v$ and $\xi$ in $\{\eta_{u \to v}, \gamma^{u \leftrightarrow v}\}$ be as given in the lemma.
	To prove $\xi\in \DW_{1}(M)$, let $x, y\in M$.  
Then $x\vee \xi(x)=y\vee \xi(y)=1$ by the definition of $\xi$. 
By Lemma~\mref{lem:x=y}, we can assume that $x\neq y$.

If $x\wedge y\neq 0$, then $x=1$ or $y=1$. Without loss of generality, let $x=1$ and $y\in  M\backslash \{0, 1\}$. Then $\xi(x)=b$ and $\xi(b)=1$
by the definition of $\xi$. Since $\xi(b)=1=(b\wedge b)\vee \xi(b)$	and $b\wedge c=0$ when $c\in M\backslash \{1, b\}$, we get
$\xi(y)=(b\wedge y)\vee \xi(y)$, and so
$$\xi(x\wedge y)=\xi(y)=(b\wedge y)\vee \xi(y)=(\xi(x)\wedge y)\vee (x\wedge \xi(y))\vee (\xi(x)\wedge \xi(y)).$$

If $x\wedge y=0$, we give separate arguments for the two parts of the lemma. 

For \mref{it:5011}, since $x\wedge y=0$, by the definition of $\eta_{u\to v}$, we have  $\eta_{u\to v}(x)=1 $ or $\eta_{u\to v} (y)=1$. It follows that
	$$\eta_{u\to v}(x\wedge y)=\eta_{u\to v}(0)=1=(\eta_{u\to v}(x)\wedge y)\vee (x\wedge \eta_{u\to v}(y))\vee (\eta_{u\to v}(x)\wedge \eta_{u\to v}(y)),$$
	since $x\vee \eta_{u\to v}(x)=y\vee \eta_{u\to v}(y)=1$.
	
This completes the proof that $\eta_{u\to v}$ is in $ \DW_{1}(M)$.

For \mref{it:5012}, we distinguish two cases. If $x\wedge y=0$, and $\gamma^{u \leftrightarrow v}(x)=1 $ or $\gamma^{u \leftrightarrow v} (y)=1$, then
$$\gamma^{u \leftrightarrow v}(x\wedge y)=\gamma^{u \leftrightarrow v}(0)=1=(\gamma^{u \leftrightarrow v}(x)\wedge y)\vee (x\wedge \gamma^{u \leftrightarrow v}(y))\vee (\gamma^{u \leftrightarrow v}(x)\wedge \gamma^{u \leftrightarrow v}(y)),$$
since $x\vee \gamma^{u \leftrightarrow v}(x)=y\vee \gamma^{u \leftrightarrow v}(y)=1$.

If $x\wedge y=0$,  $\gamma^{u \leftrightarrow v}(x)\neq 1 $ and $\gamma^{u \leftrightarrow v} (y)\neq 1$, then $\{x, y\}=\{u, v\}$, and so
$\gamma^{u \leftrightarrow v}(x\wedge y)=\gamma^{u \leftrightarrow v}(0)=1=u\vee v=(\gamma^{u \leftrightarrow v}(u)\wedge v)\vee (u\wedge \gamma^{u \leftrightarrow v}(v))\vee (\gamma^{u \leftrightarrow v}(u)\wedge \gamma^{u \leftrightarrow v}(v))=(\gamma^{u \leftrightarrow v}(x)\wedge y)\vee (x\wedge \gamma^{u \leftrightarrow v}(y))\vee (\gamma^{u \leftrightarrow v}(x)\wedge \gamma^{u \leftrightarrow v}(y))$.

In summary, we conclude that $\gamma^{u \leftrightarrow v}$ is in $ \DW_{1}(M)$.
\end{proof}

\delete{
\begin{lemma}
	Let $M$ be a quasi-antichain with
	$|M|\geq 5$, and let
	$b, u$ and $v$ be mutually distinct elements in $ M\backslash\{0, 1\}$.
	Define an operator $\gamma^{u \leftrightarrow v}$  on $M$ by
	$$
	\gamma^{u \leftrightarrow v}(x):=
	\begin{cases}
	b,  & \textrm{if}~ x= 1; \\
	v,  & \textrm{if}~ x=u; \\
	u,  & \textrm{if}~ x=v; \\
	1,  & \textrm{otherwise}.
	\end{cases}
	$$			
	Then
	$\gamma^{u \leftrightarrow v}\in \DW_{1}(M)$.
	\mlabel{lem:502}
\end{lemma}
\begin{proof}
	Let $b, u, v$ and $\gamma^{u \leftrightarrow v}$ be as given in the lemma.
	To prove that $\gamma^{u \leftrightarrow v}\in \DW_{1}(M)$, let $x, y\in M$.  Then $x\vee \gamma^{u \leftrightarrow v}(x)=y\vee \gamma^{u \leftrightarrow v}(y)=1$ by the definition of $\gamma^{u \leftrightarrow v}$. 
By Lemma~\mref{lem:x=y}, we assume $x\neq y$.
	
	If $x\wedge y=0$, and $\gamma^{u \leftrightarrow v}(x)=1 $ or $\gamma^{u \leftrightarrow v} (y)=1$, then
	$$\gamma^{u \leftrightarrow v}(x\wedge y)=\gamma^{u \leftrightarrow v}(0)=1=(\gamma^{u \leftrightarrow v}(x)\wedge y)\vee (x\wedge \gamma^{u \leftrightarrow v}(y))\vee (\gamma^{u \leftrightarrow v}(x)\wedge \gamma^{u \leftrightarrow v}(y)),$$
since $x\vee \gamma^{u \leftrightarrow v}(x)=y\vee \gamma^{u \leftrightarrow v}(y)=1$.
	
	If $x\wedge y=0$,  $\gamma^{u \leftrightarrow v}(x)\neq 1 $ and $\gamma^{u \leftrightarrow v} (y)\neq 1$, then $\{x, y\}=\{u, v\}$, and so
	$\gamma^{u \leftrightarrow v}(x\wedge y)=\gamma^{u \leftrightarrow v}(0)=1=u\vee v=(\gamma^{u \leftrightarrow v}(u)\wedge v)\vee (u\wedge \gamma^{u \leftrightarrow v}(v))\vee (\gamma^{u \leftrightarrow v}(u)\wedge \gamma^{u \leftrightarrow v}(v))=(\gamma^{u \leftrightarrow v}(x)\wedge y)\vee (x\wedge \gamma^{u \leftrightarrow v}(y))\vee (\gamma^{u \leftrightarrow v}(x)\wedge \gamma^{u \leftrightarrow v}(y))$.	
	
If $x\wedge y\neq 0$, then $x=1$ or $y=1$. Without loss of generality, let $x=1$ and $y\in  M\backslash \{0, 1\}$. Then $\gamma^{u \leftrightarrow v}(x)=b$ and $\gamma^{u \leftrightarrow v}(b)=1$
	by the definition of $\gamma^{u \leftrightarrow v}$. Since $\gamma^{u \leftrightarrow v}(b)=1=(b\wedge b)\vee \gamma^{u \leftrightarrow v}(b)$	and $b\wedge c=0$ when $c\in M\backslash \{1, b\}$, we get $\gamma^{u \leftrightarrow v}(y)=(b\wedge y)\vee \gamma^{u \leftrightarrow v}(y)$, and so
	$$\gamma^{u \leftrightarrow v}(x\wedge y)=\gamma^{u \leftrightarrow v}(y)=(b\wedge y)\vee \gamma^{u \leftrightarrow v}(y)=(\gamma^{u \leftrightarrow v}(x)\wedge y)\vee (x\wedge \gamma^{u \leftrightarrow v}(y))\vee (\gamma^{u \leftrightarrow v}(x)\wedge \gamma^{u \leftrightarrow v}(y)).$$
	
	Therefore we conclude that $\gamma^{u \leftrightarrow v}$ is in $ \DW_{1}(M)$.
\end{proof}}

\begin{lemma}
	Let $u, v\in M\backslash\{0, 1\}$ with $u\neq v$.
	Define  operators $\theta_{u\to v}$ and $\alpha^{u \leftrightarrow v}$  on $M$, respectively, by
	$$
	\theta_{u\to v}(x):=
	\begin{cases}
	0,  & \textrm{if}~ x= 1; \\
	v,  & \textrm{if}~ x=u; \\
	1,  & \textrm{otherwise}.
	\end{cases}	\quad\quad and \quad\quad
	\alpha^{u \leftrightarrow v}(x):=
	\begin{cases}
	0,  & \textrm{if}~ x= 1; \\
	v,  & \textrm{if}~ x=u; \\
	u,  & \textrm{if}~ x=v; \\
	1,  & \textrm{otherwise}.
	\end{cases}
	$$			
	Then 
	$\theta_{u\to v}$ and $\gamma^{u \leftrightarrow v}$ are in $ \DW_{1}(M)$.
	\mlabel{lem:506}
\end{lemma}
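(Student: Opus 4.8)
The plan is to verify the difference-operator identity Eq.~\meqref{eq:301} directly for each of the two operators, exploiting two structural features of the quasi-antichain $M$. First, both $\theta_{u\to v}$ and $\alpha^{u\leftrightarrow v}$ satisfy $x\vee d(x)=1$ for every $x\in M$: this is immediate from the definitions together with the relations $u\vee v=1$ for distinct atoms and $0\vee 1=1$. Second, the meet of two distinct elements of $M$ is $0$ unless one of them is the top $1$ and the other is an atom, in which case the meet is that atom. So after invoking Lemma~\mref{lem:x=y} to reduce to the case $x\neq y$, I would split the verification into the case $x\wedge y\neq 0$ and the case $x\wedge y=0$, mirroring the structure of the proof of Lemma~\mref{lem:501}.

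For the case $x\wedge y\neq 0$, I would take (without loss of generality) $x=1$ and $y\in M\setminus\{0,1\}$, so that $x\wedge y=y$. Here the decisive feature is that both operators send $1$ to $0$; hence $d(x)=0$ annihilates the first and third join terms on the right-hand side of Eq.~\meqref{eq:301}, leaving exactly $d(y)=d(x\wedge y)$, which is the left-hand side. This case is therefore routine and identical for $\theta_{u\to v}$ and $\alpha^{u\leftrightarrow v}$.

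The case $x\wedge y=0$ is where the two operators diverge and where the only real work lies. Here the left-hand side is $d(0)=1$, so I must show the right-hand side equals $1$. The clean subcase is when at least one of $d(x),d(y)$ equals $1$, say $d(x)=1$: then the right-hand side contains $(d(x)\wedge y)\vee(d(x)\wedge d(y))=y\vee d(y)=1$ by the first structural observation, and since $1$ is the top element the right-hand side equals $1$. It remains to rule out or settle the subcase where both $d(x)$ and $d(y)$ differ from $1$. For $\theta_{u\to v}$ this subcase is vacuous, since $\theta_{u\to v}$ takes a value other than $1$ only at $1$ and $u$, and $\{x,y\}=\{1,u\}$ would force $x\wedge y=u\neq 0$. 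For $\alpha^{u\leftrightarrow v}$, by contrast, the genuinely new subcase $\{x,y\}=\{u,v\}$ does occur (since $u\wedge v=0$), and this is the main obstacle; I would dispatch it by the direct computation $(\alpha^{u\leftrightarrow v}(u)\wedge v)\vee(u\wedge\alpha^{u\leftrightarrow v}(v))\vee(\alpha^{u\leftrightarrow v}(u)\wedge\alpha^{u\leftrightarrow v}(v))=(v\wedge v)\vee(u\wedge u)\vee(v\wedge u)=u\vee v=1$, again equal to the left-hand side.

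Two remarks would accompany the proof. The displayed statement names $\gamma^{u\leftrightarrow v}$ in its conclusion, but the operator actually defined and intended there is $\alpha^{u\leftrightarrow v}$; I would state and prove the claim for $\alpha^{u\leftrightarrow v}$. Moreover, because $\alpha^{u\leftrightarrow v}$ achieves the required value $1$ by sending $1\mapsto 0$ rather than through an auxiliary atom, only $|M|\ge 4$ is needed here, in contrast with $\gamma^{u\leftrightarrow v}$ in Lemma~\mref{lem:501}, which required a third distinct atom $b$ and hence $|M|\ge 5$.
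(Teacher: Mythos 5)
Your proof is correct, but it takes a different route from the paper's. The paper does not verify Eq.~\meqref{eq:301} directly: it observes that $\theta_{u\to v}=(\eta_{u\to v})^{0}$ and (for $|M|\geq 5$) $\alpha^{u\leftrightarrow v}=(\gamma^{u\leftrightarrow v})^{0}$, where $d^{0}$ is the modification-at-the-top construction of Eq.~\meqref{eq:du}, and then invokes Proposition~\mref{por:111}~\mref{it:111} (applicable since $0\leq b=\eta_{u\to v}(1)=\gamma^{u\leftrightarrow v}(1)$) together with Lemma~\mref{lem:501}. This is shorter, but because $\gamma^{u\leftrightarrow v}$ needs a third atom $b$ distinct from $u$ and $v$, the paper must treat $|M|=4$ separately for $\alpha^{u\leftrightarrow v}$ and leaves that case as ``routine to check.'' Your direct case analysis --- reducing to $x\neq y$ via Lemma~\mref{lem:x=y}, splitting on $x\wedge y\neq 0$ versus $x\wedge y=0$, using $d(1)=0$ to kill two join terms in the first case and $y\vee d(y)=1$ in the second, and handling the one genuinely new subcase $\{x,y\}=\{u,v\}$ by explicit computation --- is self-contained, works uniformly for all $|M|\geq 4$, and so actually fills in the gap the paper waves at. Your two remarks are also well taken: the conclusion of the lemma as printed says $\gamma^{u\leftrightarrow v}$ where it plainly means $\alpha^{u\leftrightarrow v}$, and no auxiliary atom (hence no hypothesis $|M|\geq 5$) is needed here precisely because $1$ is sent to $0$ rather than to a third atom.
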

\begin{proof}
	Let $u, v, \theta_{u\to v}$ and $\alpha^{u \leftrightarrow v}$ be as given in the lemma.
	Fix	$b\in M\backslash \{0, u, 1\}$.
	Then $\theta_{u\to v}=(\eta_{u\to v})^{0}$, and so 	$\theta_{u\to v}\in \DW_{1}(M)$ by Proposition \mref{por:111} and Lemma \mref{lem:501} \ref{it:5011}.
	
	It is routine to check that $\alpha^{u \leftrightarrow v}\in \DW_{1}(M)$ if $|M|=4$.	
	When $|M|\geq 5$, fix $b\in M\backslash \{0, 1, u, v\}$. Then
	$\alpha^{u \leftrightarrow v}=(\gamma^{u \leftrightarrow v})^{0}$, and so 
	$\alpha^{u \leftrightarrow v}\in \DW_{1}(M)$ by Proposition \mref{por:111} and Lemma \mref{lem:501} \ref{it:5012}.
\end{proof}

\subsubsection{Families of \diffcs when $d(0)\neq 0,1$}
\mlabel{sss:dother}
Finally, we present a class of \diffcs $d$ on $M$ such that $d(0)\in M\backslash\{0, 1\}$.
\begin{lemma}
	Let $a\in M\backslash\{0, 1\}$.
	Define an operator $\lambda_{a}$  on $M$ by
	$$
	\lambda_{a}(x):=
	\begin{cases}
	0,  & \textrm{if}~ x\in \{a, 1\}; \\
	a,  & \textrm{otherwise}.
	\end{cases}
	$$			
	Then
	$\lambda_{a}$ is in $ \DW_{1}(M)$. 			
	\mlabel{lem:511}
\end{lemma}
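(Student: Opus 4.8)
The plan is to verify the difference operator identity~\eqref{eq:301} for $d=\lambda_a$ directly, by a short case analysis that exploits the rigid arithmetic of a quasi-antichain. Recall that in $M$ every element other than $0,1$ is an atom, so for two distinct atoms $b,b'$ one has $b\wedge b'=0$ and $b\vee b'=1$, while $a\wedge b=0$ whenever $b$ is an atom different from $a$. I would also record the values $\lambda_a(1)=\lambda_a(a)=0$ and $\lambda_a(x)=a$ for every $x\in M\backslash\{a,1\}$. By Lemma~\ref{lem:x=y} it suffices to treat pairs $x\neq y$, and since the right-hand side of~\eqref{eq:301} is symmetric in $x$ and $y$, I may fix any convenient labelling.

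The first case is when $1\in\{x,y\}$, say $x=1$. Then $x\wedge y=y$, and because $\lambda_a(1)=0$ the three terms on the right of~\eqref{eq:301} collapse: $(0\wedge y)\vee(1\wedge\lambda_a(y))\vee(0\wedge\lambda_a(y))=\lambda_a(y)$, which matches $\lambda_a(x\wedge y)=\lambda_a(y)$. The remaining case is $1\notin\{x,y\}$ with $x\neq y$; here both elements lie in $\{0\}$ together with the atoms, so $x\wedge y=0$ and the left-hand side is $\lambda_a(0)=a$. It then remains only to show that the right-hand side also equals $a$.

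For this I would argue that the right-hand side is always $\leq a$ and contains at least one summand equal to $a$. Each summand is a meet having a factor in $\{\lambda_a(x),\lambda_a(y)\}\subseteq\{0,a\}$, hence is $\leq a$, so the join is $\leq a$. To locate a summand equal to $a$, note that since $x\neq y$ at most one of them equals $a$. If neither equals $a$, then $\lambda_a(x)=\lambda_a(y)=a$ and the third summand is $a\wedge a=a$; if exactly one equals $a$, after relabelling say $x=a$ and $y\neq a$, then $\lambda_a(y)=a$ and the middle summand is $x\wedge\lambda_a(y)=a\wedge a=a$. Either way the right-hand side equals $a=\lambda_a(x\wedge y)$, completing the verification that $\lambda_a\in\DW_1(M)$.

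The computation is entirely routine; the only point requiring a little care is the observation that $x\wedge y=0$ for all distinct $x,y$ once the top element is excluded, which is exactly what forces the single nonzero summand to be $a$ rather than some other atom. I would not expect to need any of the derived-operator constructions of Section~\ref{su:new}; a bare-hands check is cleanest here.
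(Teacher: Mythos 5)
Your proof is correct and follows essentially the same route as the paper's: a direct case-by-case verification of Eq.~\eqref{eq:301} after reducing to distinct $x,y$ via Lemma~\ref{lem:x=y}, using the fact that distinct elements below the top of a quasi-antichain meet to $0$. The only difference is cosmetic: you merge the paper's two sub-cases with $1\notin\{x,y\}$ into a single ``bounded above by $a$ plus a witness summand equal to $a$'' argument, which is a slightly tidier packaging of the same computation.
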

\begin{proof}
	Let $a$ and $\lambda_{a}$ be as given in the lemma.
	To prove that $\lambda_{a}\in \DW_{1}(M)$, let $x, y\in M$ which we can assume to be distinct because of Lemma~\mref{lem:x=y}.  
	
	If $x, y\in M\backslash \{a, 1\}$, then $x\wedge y=0$, and $\lambda_{a}(x)=\lambda_{a}(y)=a $
	by the definition of $\lambda_{a}$. So
$$\lambda_{a}(x\wedge y)=\lambda_{a}(0)=a=(\lambda_{a}(x)\wedge y)\vee (x\wedge \lambda_{a}(y))\vee (\lambda_{a}(x)\wedge \lambda_{a}(y)).$$
	
If $x=1$ or $y=1$ (but not both), say $x=1$, then $x\wedge y=y$, and $\lambda_{a}(x)=0$
	by the definition of $\lambda_{a}$. It follows that
	$\lambda_{a}(x\wedge y)=\lambda_{a}(y)=(\lambda_{a}(x)\wedge y)\vee (x\wedge \lambda_{a}(y))\vee (\lambda_{a}(x)\wedge \lambda_{a}(y)).$
	
	If $x=a$ or $ y=a$, say $x=a$ and $ y\in M\backslash \{a, 1\}$, then $x\wedge y=0$, and $\lambda_{a}(x)=0$, $\lambda_{a}(y)=a $
	by the definition of $\lambda_{a}$. Thus
	$\lambda_{a}(x\wedge y)=\lambda_{a}(0)=a=(\lambda_{a}(x)\wedge y)\vee (x\wedge \lambda_{a}(y))\vee (\lambda_{a}(x)\wedge \lambda_{a}(y)).$
	
	Therefore we conclude that $\lambda_{a}$ is in $ \DW_{1}(M)$.
\end{proof}

 For  a finite set $X=\{x_{1}, x_{2}, \cdots, x_{n}\}$, and a map $\varphi: X\rightarrow X$. We write
$$ \left( \begin{matrix}
x_{1}          & x_{2}            & \cdots & x_{n} \\
\varphi(x_{1}) &   \varphi(x_{2}) & \cdots &   \varphi(x_{n})
\end{matrix}
\right)$$
for $\varphi$. For convenience, let
$M_{2}=\{0, a, b, 1\}$ denote the $4$-element quasi-antichain.

\begin{lemma}
	Define operators  $\Phi_{a}$, $\Phi_{b}$ and $\Theta$ respectively  on $M_{2}$ by 
	$$ \Phi_{a}=\left( \begin{matrix}
	0   & a    & b & 1 \\
	a   &   0 & 1 &  b
	\end{matrix}
	\right), \quad
\Phi_{b}=\left( \begin{matrix}
0   & a    & b & 1 \\
b   &   1 & 0 &  a
\end{matrix}
\right), \quad	
	\Theta=\left( \begin{matrix}
	0   & a    & b & 1 \\
	a    &   0 & a &  b
	\end{matrix}
	\right). $$
	Then
	$\Phi_{a}$ and $\Phi_{b}$ are in $  \DW_{1}(M_{2})$, but $\Theta\not\in \DW_{1}(M_{2})$. 			
	\mlabel{lem:513}
\end{lemma}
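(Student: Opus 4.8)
The plan is to reduce the entire statement to a finite, case-by-case verification of the difference operator identity Eq.~\meqref{eq:301}, exploiting the fact that $M_{2}=\{0,a,b,1\}$ has only four elements. The only structural data I need are the lattice relations in $M_{2}$: the two atoms satisfy $a\wedge b=0$ and $a\vee b=1$, while $0$ and $1$ are the bottom and top. By Lemma~\mref{lem:x=y} the identity Eq.~\meqref{eq:301} holds automatically whenever $x=y$, so for each of the three operators it suffices to test the six unordered pairs of distinct elements, namely $\{0,a\},\{0,b\},\{0,1\},\{a,b\},\{a,1\},\{b,1\}$.

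For $\Phi_{a}$ I would evaluate both sides of Eq.~\meqref{eq:301} on these six pairs using the table $\Phi_{a}(0)=a$, $\Phi_{a}(a)=0$, $\Phi_{a}(b)=1$, $\Phi_{a}(1)=b$; each pair yields agreement. For instance, the pair $\{b,1\}$ gives $\Phi_{a}(b\wedge 1)=\Phi_{a}(b)=1$ on the left and $(\Phi_{a}(b)\wedge 1)\vee(b\wedge\Phi_{a}(1))\vee(\Phi_{a}(b)\wedge\Phi_{a}(1))=(1\wedge1)\vee(b\wedge b)\vee(1\wedge b)=1$ on the right. Running through the remaining five pairs the same way confirms $\Phi_{a}\in\DW_{1}(M_{2})$.

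For $\Phi_{b}$ I would avoid repeating the computation by invoking symmetry. The transposition $\sigma$ of $M_{2}$ that swaps the two atoms $a$ and $b$ while fixing $0$ and $1$ is a lattice automorphism, and one checks directly that $\Phi_{b}=\sigma\,\Phi_{a}\,\sigma^{-1}$. Conjugation by a lattice automorphism preserves Eq.~\meqref{eq:301}: since $\sigma$ commutes with $\wedge$ and $\vee$, applying $\sigma$ to the $\DW_{1}$ identity for $\Phi_{a}$ at the pair $(\sigma^{-1}x,\sigma^{-1}y)$ returns exactly the identity for $\Phi_{b}$ at $(x,y)$. Hence $\Phi_{b}\in\DW_{1}(M_{2})$ follows immediately from the previous step. (Alternatively, the same six-pair check works verbatim for $\Phi_{b}$.)

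Finally, to show $\Theta\notin\DW_{1}(M_{2})$ it suffices to produce a single violating pair. Using $\Theta(b)=a$ and $\Theta(1)=b$, take $x=b$ and $y=1$: the left side of Eq.~\meqref{eq:301} is $\Theta(b\wedge 1)=\Theta(b)=a$, whereas the right side is $(\Theta(b)\wedge 1)\vee(b\wedge\Theta(1))\vee(\Theta(b)\wedge\Theta(1))=(a\wedge1)\vee(b\wedge b)\vee(a\wedge b)=a\vee b\vee 0=1$. Since $a\ne 1$, the identity fails and $\Theta\notin\DW_{1}(M_{2})$. There is no genuine obstacle here beyond finite bookkeeping; the only steps requiring a moment's thought are recognizing the $a\leftrightarrow b$ symmetry that halves the work for $\Phi_{b}$, and selecting the pair $(b,1)$ that exposes the failure of $\Theta$.
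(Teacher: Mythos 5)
Your proposal is correct and follows essentially the same route as the paper: a direct finite check of Eq.~\meqref{eq:301} on the distinct pairs for $\Phi_{a}$, the $a\leftrightarrow b$ symmetry (which the paper phrases as ``interchanging $a$ with $b$'') for $\Phi_{b}$, and the same violating pair $\{b,1\}$ to rule out $\Theta$. The computations you exhibit are accurate, so nothing further is needed.
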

\begin{proof}
To prove that $\Phi=\Phi_{a}$ is in $\DW_{1}(M_{2})$, we let $x, y\in M_{2}$ and assume that they are distinct thanks to Lemma~\mref{lem:x=y}. 

If $x=1$ or $y=1$ (but not both), say $x=1$ and $y\neq 1$, then  $\Phi(x)\wedge y\leq \Phi(y)$ by the definition of $\Phi$, and so 	$\Phi(x\wedge y)=\Phi(y)=(\Phi(x)\wedge y)\vee \Phi(y)=(\Phi(x)\wedge y)\vee (x\wedge \Phi(y))\vee (\Phi(x)\wedge \Phi(y))$.
	
If $x=0$ or $y=0$, say $x=0$ and $y\in \{a, b\}$, then it is easy to check that
	$(a\wedge y)\vee(a\wedge \Phi(y))=a$, and so
	$\Phi(x\wedge y)=\Phi(0)=a=(a\wedge y)\vee(a\wedge \Phi(y))=(\Phi(x)\wedge y)\vee (x\wedge \Phi(y))\vee (\Phi(x)\wedge \Phi(y))$.
	
	If  $x, y\in \{a, b\}$, say $x=a$ and $y=b$, then $\Phi(x)=0$ and $\Phi(y)=1$, which implies that
	$\Phi(x\wedge y)=\Phi(0)=a=a\wedge \Phi(b)=(\Phi(x)\wedge y)\vee (x\wedge \Phi(y))\vee (\Phi(x)\wedge \Phi(y))$.	
	
Therefore we conclude that $\Phi=\Phi_{a}$ is in $ \DW_{1}(M_{2})$. Interchanging $a$ with $b$, we can prove that $\Phi_{b}$ is in $ \DW_{1}(M_{2})$.
	
Finally, suppose on the contrary that	 $\Theta\in \DW_{1}(M_{2})$. Then we have
	by Eq.\meqref{eq:301} that
	$b=\Theta(1)\wedge b\leq \Theta(1\wedge b)=\Theta(b)=a,$
	a contradiction.	Thus $\Theta\not\in \DW_{1}(M_{2})$. 
\end{proof}

\subsection{The classification and enumeration of \diffcs}
\mlabel{ss:main}
The classification of \diffcs when $d(0)=0$ has been achieved in Proposition~\mref{pro:500}. So we only need to consider the cases when $d(0)=1$ and $d(0)\neq 0,1$ which we will treat in Sections~\mref{sss:cd=1} and \mref{sss:cdother}. The enumeration of the \diffcs will be given in Section~\mref{sss:enum}. 

\subsubsection{Classification of \diffcs when $d(0)=1$}
\mlabel{sss:cd=1}
To classify and enumerate \diffcs on quasi-antichains, the following lemmas are needed.

\begin{lemma}
	Let $u, v\in M\backslash\{0, 1\}$ with $u\neq v$.
	If $d$ is an operator on $M$ such that
	$d(0)=1$ and $d(u)=d(v)\neq 1$, then $d\not\in \DW_{1}(M)$.
	\mlabel{lem:504}
\end{lemma}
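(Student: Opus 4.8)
The plan is to derive a contradiction directly from the difference-operator identity in Eq.~\meqref{eq:301} by evaluating it at $x=u$, $y=v$. The decisive structural fact about a quasi-antichain is that any two distinct elements of $M\backslash\{0,1\}$ are distinct atoms, so that $u\wedge v=0$. This collapses the left-hand side of Eq.~\meqref{eq:301} to $d(0)=1$, while the hypothesis $d(u)=d(v)$ forces the right-hand side to collapse as well.

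Concretely, suppose for contradiction that $d\in\DW_{1}(M)$ under the stated hypotheses, and write $w:=d(u)=d(v)$, so that $w\neq 1$. First I would record that $u\wedge v=0$ since $u$ and $v$ are distinct atoms. Then applying Eq.~\meqref{eq:301} with $x=u$ and $y=v$ gives
\[
d(0)=d(u\wedge v)=(d(u)\wedge v)\vee (u\wedge d(v))\vee (d(u)\wedge d(v))=(w\wedge v)\vee (u\wedge w)\vee (w\wedge w).
\]
The point is that $w\wedge w=w$, and both $w\wedge v\leq w$ and $u\wedge w\leq w$, so the join of the three terms is simply $w$. Hence the computation yields $1=d(0)=w=d(u)$, contradicting the assumption $d(u)\neq 1$. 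This contradiction shows $d\notin\DW_{1}(M)$, as claimed.

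I expect no substantive obstacle here: the entire argument rests on the single substitution and the two observations that $u\wedge v=0$ in a quasi-antichain and that $w=w\wedge w$ absorbs the other two join terms. The only care needed is to confirm that $u,v\in M\backslash\{0,1\}$ genuinely forces $u\wedge v=0$ (which is exactly the defining feature of the quasi-antichain Hasse diagram, where all of $b_1,\dots,b_{n-2}$ are atoms with pairwise meet $0$), and that no degenerate case intervenes—but since $u\neq v$ are both proper atoms this is immediate.
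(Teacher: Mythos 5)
Your proof is correct and is essentially identical to the paper's: both evaluate Eq.~\meqref{eq:301} at $x=u$, $y=v$, use that distinct atoms satisfy $u\wedge v=0$ so the left side is $d(0)=1$, and observe that the term $d(u)\wedge d(v)=w$ absorbs the other two joinands so the right side equals $w\neq 1$. No gaps.
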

\begin{proof}
	Let $ u, v$ and $d$ be as given in the lemma.
	Then 
	$d(u\wedge v)=d(0)=1\neq d(u)= (d(u)\wedge v)\vee (u\wedge d(v))\vee (d(u)\wedge d(v)),$ 
	and so 	$d\not\in \DW_{1}(M)$.
\end{proof}

\begin{lemma}
	Let $|M|\geq 5$, and
	$u, v$, $w$ be mutually distinct elements in $ M\backslash\{0, 1\}$.
	If $d$ is an operator on $M$ such that
	$d(0)=1$ and $d(u), d(v), d(w)\in M\backslash \{0, 1\}$, then $d\not\in \DW_{1}(M)$.
	\mlabel{lem:505}
\end{lemma}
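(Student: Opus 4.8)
The plan is to argue by contradiction, exploiting the rigid arithmetic of a quasi-antichain together with the constraint $d(0)=1$. Recall that in $M$ the elements other than $0$ and $1$ are pairwise incomparable atoms, so for distinct atoms $x,y$ one has $x\wedge y=0$ and $x\vee y=1$, while the meet of two atoms is either $0$ (if they differ) or the common atom (if they agree); in particular a meet of two atoms is never $1$. Consequently, when \meqref{eq:301} is applied to two atoms, each of its three summands is itself an atom or $0$, and such a join can equal $1$ only if it contains two \emph{distinct} atoms.

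First I would use \mref{lem:504} to reduce to the generic configuration. Suppose $d\in\DW_{1}(M)$. Since $d(0)=1$ and $d(u),d(v),d(w)\in M\backslash\{0,1\}$, none of these three values equals $1$; hence if any two of them coincided, \mref{lem:504} would already yield $d\notin\DW_{1}(M)$. So I may assume $d(u),d(v),d(w)$ are pairwise distinct atoms.

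Next I would extract the forced structure from each pair. Applying \meqref{eq:301} to $x=u,\,y=v$ gives $1=d(0)=d(u\wedge v)=(d(u)\wedge v)\vee(u\wedge d(v))\vee(d(u)\wedge d(v))$. Because $d(u)\neq d(v)$, the term $d(u)\wedge d(v)$ vanishes, leaving the join of the two atoms-or-zeros $d(u)\wedge v$ and $u\wedge d(v)$. By the atom arithmetic above, $d(u)\wedge v\neq 0$ forces $d(u)=v$ (with value $v$) and $u\wedge d(v)\neq 0$ forces $d(v)=u$ (with value $u$); whenever both are nonzero their values $v$ and $u$ are automatically distinct, so the join reaches $1$ precisely in the ``transposition'' case $d(u)=v,\ d(v)=u$. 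The identical computation applied to the pair $u,w$ forces $d(u)=w$.

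Finally, the conclusions $d(u)=v$ and $d(u)=w$ contradict $v\neq w$, so the assumption $d\in\DW_{1}(M)$ is untenable, which is exactly the assertion of the lemma. The only delicate point is the verification that the surviving two-term join can equal $1$ solely via the transposition $d(u)=v,\ d(v)=u$; this is a short finite check in the atom arithmetic of $M$, and the substantive content of the lemma is then just the impossibility of $d(u)$ being simultaneously the two distinct atoms $v$ and $w$.
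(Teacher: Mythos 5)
Your proposal is correct and follows essentially the same route as the paper: reduce via Lemma \mref{lem:504} to the case of pairwise distinct atom values, apply Eq.~\meqref{eq:301} to the pairs $(u,v)$ and $(u,w)$ to force $d(u)=v$ and $d(u)=w$, and conclude by contradicting $v\neq w$. Your extra paragraph spelling out why the two-term join of atoms-or-zeros equals $1$ only in the transposition case is just a more explicit rendering of the paper's ``It follows that'' step.
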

\begin{proof}
	Let $ u, v, w$ and $d$ be as given in the lemma.
	
	If $d(u)=d(v)$ or $d(v)=d(w)$ or
	$d(u)=d(w)$, then $d\not\in \DW_{1}(M)$ by Lemma \mref{lem:504}.
	
	If  $d\in \DW_{1}(M)$ and $d(u), d(v), d(w)$ are mutually distinct elements in $M\backslash \{0, 1\}$,	
	then  $d(u)\wedge d(v)=0$, and so
	$1=d(0)=d(u\wedge v)=(d(u)\wedge v)\vee (u\wedge d(v))\vee (d(u)\wedge d(v))=(d(u)\wedge v)\vee (u\wedge d(v)).$
	It follows  that $d(u)=v$	and $d(v)=u$. 
	Similarly, we can obtain that $d(u)=w$ and $d(w)=u$. Thus $d(u)=v=w$, a contradiction. Consequently, $d$ is not in $\DW_{1}(M)$.
\end{proof}	
	
	\begin{lemma}
Let $d\in \DW_{1}(M)$ such that
$d(0)=1$. Then there exist at most two elements $u, v\in M\backslash \{0, 1\}$
such that $d(u), d(v)\in M\backslash \{0,  1\}$, and $d(y)=1$ for all $y\in M\backslash \{u, v, 1\}$.
		\mlabel{lem:50}
\end{lemma}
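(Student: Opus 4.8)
The plan is to first extract the single structural consequence of the hypothesis $d(0)=1$ and then feed it into the counting lemma already proved. Recall that since $M$ is a quasi-antichain, $M=\{0,1\}\cup A$ where $A$ is its set of atoms, and any two distinct atoms $p,q$ satisfy $p\wedge q=0$ and $p\vee q=1$. The first step is to apply Proposition \mref{pro:3000} \mref{it:3005}: because $x\vee d(x)\leq 1=d(0)$ holds automatically for every $x\in M$, that proposition forces $x\vee d(x)=1$ for all $x\in M$. This identity is the engine of the whole argument.

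Next I would read off the admissible values of $d$ on an atom. Fix an atom $x\in M\backslash\{0,1\}$; since every element of $M$ is $0$, $1$, or an atom, we have $d(x)\in\{0,1\}\cup A$. The identity $x\vee d(x)=1$ rules out $d(x)=0$ (as $x\vee 0=x\neq 1$) and $d(x)=x$ (as $x\vee x=x\neq 1$). Hence each atom $x$ satisfies either $d(x)=1$ or $d(x)$ is an atom distinct from $x$; equivalently, $d(x)\in M\backslash\{0,1\}$ holds exactly when $d(x)$ is an atom.

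Finally I would bound the exceptional set $S:=\{x\in M\backslash\{0,1\}\mid d(x)\in M\backslash\{0,1\}\}$. If $|S|\geq 3$, choose three distinct atoms $u,v,w\in S$; then $|M|\geq 5$, and Lemma \mref{lem:505} yields $d\notin\DW_{1}(M)$, contradicting the hypothesis, so $|S|\leq 2$. (When $|M|=4$ there are only two atoms, so $|S|\leq 2$ is automatic.) Taking $u,v$ to exhaust $S$, any $y\in M\backslash\{u,v,1\}$ is either $0$, for which $d(0)=1$ by hypothesis, or an atom outside $S$, for which $d(y)$ is not an atom and hence $d(y)=1$ by the previous paragraph. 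This establishes both assertions. I do not anticipate a genuine obstacle: the only point requiring care is excluding $d(x)=0$ on atoms, which the identity $x\vee d(x)=1$ settles at once, while the cardinality bound is essentially a repackaging of Lemma \mref{lem:505}.
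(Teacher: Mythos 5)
Your proof is correct and follows essentially the same route as the paper's: the cardinality bound comes from Lemma \ref{lem:505} and the identity $x\vee d(x)=1$ from Proposition \ref{pro:3000}~(v) handles the remaining atoms. Your explicit treatment of the $|M|=4$ case (where Lemma \ref{lem:505} does not apply but the bound is automatic) is a small point of care that the paper leaves implicit.
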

\begin{proof}
Let $d\in \DW_{1}(M)$ such that
$d(0)=1$. 
By Lemma \mref{lem:505},
there exist at most two elements $u, v\in M\backslash \{0, 1\}$
such that $d(u), d(v)\in M\backslash \{0, 1\}$.

If  $ M\backslash \{0, u, v, 1\}=\emptyset$, then we have done, since $d(0)=1$.

If  $ M\backslash \{0, u, v, 1\}\neq\emptyset$, then
for each $y\in M\backslash \{0, u, v, 1\}$,  we have $y\vee d(y)=1$ by Proposition 
\mref{pro:3000} \mref{it:3005}, so 
$d(y)\in M\backslash \{0, y\}$. It follows that $d(y)=1$, since $d(y)\not\in M\backslash \{0, 1\}$.
\end{proof}

Now we classify all  \diffcs  $d$ on  $M$ such that $d(0)=1$.
\begin{theorem}
All \diffcs $d$  on $M$ such that $d(0)=1$ are given as follows.
\begin{enumerate}
	\item When $d(1)=1$, then $d=\textbf{C}_{(1)}$. \mlabel{it:8111}
	\item When $d(1)=0$, then $d$ is  of the following forms:
	\begin{enumerate}
	\item  $(\textbf{C}_{(1)})^{0}$ (as defined in Eq.\meqref{eq:du});
	\item $\theta_{u \to v}$ for some $u, v\in M\backslash \{0, 1\}$ with $u\neq v$  (as defined in Lemma \mref{lem:506});
	\item $\alpha^{u \leftrightarrow v}$ for some $u, v\in M\backslash \{0, 1\}$ with $u\neq v$ (as defined in Lemma \mref{lem:506}).
\end{enumerate}	
\mlabel{it:8112}	
	\item When $d(1)=b\in M\backslash \{0,1\}$, then $d$ is of the following forms:
	\begin{enumerate}
	\item  $(\textbf{C}_{(1)})^{b}$ (as defined in Eq. \meqref{eq:du});
	\item $\eta_{u \to v}$ for some $u, v\in M\backslash \{0, 1\}$ with $u\neq v$ and $u\neq b$ (as defined in Lemma \mref{lem:501} \ref{it:5011});
	\item $\gamma^{u \leftrightarrow v}$ for some $u, v\in M\backslash \{0, b, 1\}$ with $u\neq v$ (as defined in Lemma \mref{lem:501} \ref{it:5012}).
\end{enumerate}	
\mlabel{it:8113}
\end{enumerate}
\mlabel{pro:502}
\end{theorem}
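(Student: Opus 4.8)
The plan is to fix $d\in\DW_1(M)$ with $d(0)=1$ and read off its complete behaviour from the structural constraints already assembled, then split according to the value of $d(1)$. The starting point is that $d(0)=1$ forces $x\vee d(x)=1$ for every $x\in M$ by Proposition~\mref{pro:3000}~\mref{it:3005}; since in a quasi-antichain the only elements joining with an atom $c$ to give $1$ are $1$ and the atoms different from $c$, this already shows $d(c)\in\{1\}\cup(\text{atoms}\setminus\{c\})$ for each atom $c$. Next, Lemma~\mref{lem:50} tells us that at most two atoms $u,v$ satisfy $d(u),d(v)\in M\setminus\{0,1\}$, while $d(y)=1$ for every other $y\in M\setminus\{u,v,1\}$ (and $d(0)=1$). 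Thus the entire map is determined once we know $d(1)$ together with the at most two ``nontrivial'' atoms and their images, and the proof reduces to pinning these down in each case.

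First, if $d(1)=1$ then $d(0)=d(1)=1$ and Proposition~\mref{pro:001} gives $d=\textbf{C}_{(1)}$, which is case~\mref{it:8111}. The heart of the argument is the two-atom situation, and this single computation is the step I expect to be the main obstacle; everything else is bookkeeping. Suppose two distinct atoms $u,v$ both map into $M\setminus\{0,1\}$; by Lemma~\mref{lem:504} we have $d(u)\neq d(v)$, and as observed above $d(u)\neq u$, $d(v)\neq v$. Applying Eq.~\meqref{eq:301} to the pair $(u,v)$ gives $1=d(0)=(d(u)\wedge v)\vee(u\wedge d(v))\vee(d(u)\wedge d(v))$. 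Since $d(u),d(v)$ are distinct atoms we get $d(u)\wedge d(v)=0$, and since distinct atoms meet in $0$, each of $d(u)\wedge v$ and $u\wedge d(v)$ is either an atom or $0$; the only way their join can reach $1$ is $d(u)=v$ and $d(v)=u$. Hence on the two-atom locus $d$ must be the transposition $u\leftrightarrow v$.

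With this in hand I would finish by cases on $d(1)$. If $d(1)=0$: according as zero, one, or two atoms are nontrivial, the map is $(\textbf{C}_{(1)})^{0}$, $\theta_{u\to v}$ (with $d(u)=v$ and the rest sent to $1$), or $\alpha^{u\leftrightarrow v}$ (the transposition just found), giving the three forms of case~\mref{it:8112}. If $d(1)=b\in M\setminus\{0,1\}$: I first note $d(b)=1$, because Proposition~\mref{pro:3000}~\mref{it:3004} applied to $d(1)=b\leq b$ yields $b\leq d(b)$, so $d(b)\in\{b,1\}$, and $b\vee d(b)=1$ rules out $d(b)=b$; hence $b$ is not one of the nontrivial atoms, which is exactly the constraint $u\neq b$ (and $v\neq b$ in the two-atom case). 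Splitting again by the number of nontrivial atoms $0,1,2$ produces $(\textbf{C}_{(1)})^{b}$, $\eta_{u\to v}$ (with $u\neq b$), and $\gamma^{u\leftrightarrow v}$ (with $u,v\neq b$, which requires a third atom, i.e.\ $|M|\geq 5$), the three forms of case~\mref{it:8113}.

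Finally, for the reverse inclusion each listed operator lies in $\DW_1(M)$: the operators $\theta_{u\to v},\alpha^{u\leftrightarrow v}$ and $\eta_{u\to v},\gamma^{u\leftrightarrow v}$ are handled by Lemmas~\mref{lem:506} and \mref{lem:501}, while the two constant-type cases $(\textbf{C}_{(1)})^{0}$ and $(\textbf{C}_{(1)})^{b}$ follow from Proposition~\mref{por:111}~\mref{it:111}, using $\textbf{C}_{(1)}\in\DW_1(M)$ together with $0\leq\textbf{C}_{(1)}(1)=1$ and $b\leq\textbf{C}_{(1)}(1)=1$. This completes the classification.
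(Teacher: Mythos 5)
Your proposal is correct and follows essentially the same route as the paper: reduce via Proposition~\mref{pro:3000}~\mref{it:3005} and Lemma~\mref{lem:50} to at most two nontrivial atoms, force the transposition $d(u)=v$, $d(v)=u$ in the two-atom case via Eq.~\meqref{eq:301} applied to $u\wedge v=0$, and split on $d(1)$. The only (harmless) deviation is that you derive $d(b)=1$ in the case $d(1)=b$ from Proposition~\mref{pro:3000}~\mref{it:3004} plus $b\vee d(b)=1$, whereas the paper gets it directly from Proposition~\mref{pro:3000}~\mref{it:3002} with $0\leq b\leq d(1)$.
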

\begin{proof} 
Let $d\in \DW_{1}(M)$ such that  $d(0)=1$. Then for each $x\in M\backslash \{ 1\}$,  we have $x\vee d(x)=1$ by Proposition 
\mref{pro:3000} \mref{it:3005}, so 
$d(x)\in M\backslash \{0, x\}$.	

	\mnoindent
\mref{it:8111} If $d(1)=1$, then 
$d=\textbf{C}_{(1)}$ by Proposition \mref{pro:001}.
	
	\mnoindent
\mref{it:8112}	When $d(1)=0$, we know by Proposition \mref{por:111} and Lemma \mref{lem:506} that $(\textbf{C}_{(1)})^{0}$, $\theta_{u \to v}$ and $\alpha^{u \leftrightarrow v}$ are in $\DW_{1}(M)$.
Now we  claim that 	 $d$ is  of the three forms. 
In fact,
 by Lemma \mref{lem:50} we only need to consider the following cases.

\smallskip
\noindent
 \textit{(1) $d(x)=1$ for all $x\in M\backslash \{ 1\}$.}	
In this case, it is clear that	  
$d= (\textbf{C}_{(1)})^{0}$, since $d(1)=0$.	
\smallskip

\noindent
\textit{(2) There exists $u\in M\backslash \{0,  1\}$ such that
	$d(u)\neq 1$ and 
	$d(y)=1$ for all $y\in M\backslash \{u, 1\}$. }
In this case, since $d(u)\in M\backslash \{0, u\}$, we have $d(u)\in M\backslash \{0, u, 1\}$. Let $d(u)=v$. We have $v\neq u$ and $d=\theta_{u \to v}$.
\smallskip
	
\noindent
 \textit{(3) There exist $u, v\in M\backslash \{0,  1\}$ with $u\neq v$ such that
	$d(u)\neq 1$, $d(v)\neq 1$ and 
	$d(y)=1$ for all $y\in M\backslash \{u, v, 1\}$.}
In this case, since $d(u)\in M\backslash \{0, u\}$ and $d(v)\in M\backslash \{0, v\}$, we have $d(u)\in M\backslash \{0, u, 1\}$ and  $d(v)\in M\backslash \{0, v, 1\}$.
	We claim that $d(u)=v$ and $d(v)=u$. In fact, we have $d(u)\neq d(v)$ by Lemma \mref{lem:504}, and so $d(u)\wedge d(v)=0$.
	If  $d(u)\neq v$, then $d(u)\wedge v=0$, and so
	 $$d(u\wedge v)=d(0)=1 \neq u \wedge d(v)= (d(u)\wedge v)\vee (u\wedge d(v))\vee (d(u)\wedge d(v)),$$ a contradiction.
	Thus $d(u)=v$. Similarly, we can get that $d(v)=u$. Therefore $d=\alpha^{u \leftrightarrow v}$.

	\mnoindent
\mref{it:8113} When $d(1)=b\in M\backslash \{0,1\}$,
we know by Proposition \mref{por:111} and Lemma \mref{lem:501}  that $(\textbf{C}_{(1)})^{b}$, $\eta_{u \to v}$ and $\gamma^{u \leftrightarrow v}$ are in $\DW_{1}(M)$.

Now we  claim that 	 $d$ is of one of the three forms. 
In fact, since 
$0\leq b= d(1)$, we have  
$1=d(0)\leq d(b)$ by Proposition \mref{pro:3000} \mref{it:3002}, and
so $d(b)=1$.
By Lemma \mref{lem:50} we only need to consider the following cases.
	
\mnoindent
\textit{(1) $d(x)=1$ for all $x\in M\backslash \{1\}$.}
In this case, it is clear that $d= (\textbf{C}_{(1)})^{b}$, since $d(1)=b$.	

\mnoindent
\textit{(2) There exists $u\in M\backslash \{0, b, 1\}$ such that
	$d(u)\neq 1$, and 
	$d(y)=1$ for all $y\in M\backslash \{u, 1\}$.}
In this case, since $d(u)\in M\backslash \{0, 1\}$, we have  $d(u)\in M\backslash \{0, u, 1\}$.
	Let $d(u)=v$. Then $d=\eta_{u \to v}$.

\mnoindent
\textit{(3) There exist $u, v\in M\backslash \{0, b, 1\}$ with $u\neq v$ such that
	$d(u)\neq 1$, $d(v)\neq 1$ and 
	$d(y)=1$ for all $y\in M\backslash \{u, v, 1\}$.}
In this case, since $d(u)\in M\backslash \{0, u\}$ and $d(v)\in M\backslash \{0, v\}$, we have $d(u)\in M\backslash \{0, u, 1\}$ and  $d(v)\in M\backslash \{0, v, 1\}$.	
We claim that $d(u)=v$ and $d(v)=u$. In fact, we have $d(u)\neq d(v)$ by Lemma \mref{lem:504}, and so $d(u)\wedge d(v)=0$.
	If  $d(u)\neq v$, then  $d(u)\wedge v=0$, and so
	$$ (d(u)\wedge v)\vee (u\wedge d(v))\vee (d(u)\wedge d(v))=u \wedge d(v)\neq 1=d(0)=d(u\wedge v),$$ 
	a contradiction.
	Thus $d(u)=v$. Similarly, we can get that $d(v)=u$. Therefore $d=\gamma^{u \leftrightarrow v}$.
\end{proof}

\subsubsection{Classification of \diffcs when $d(0)\neq 0,1$}
\mlabel{sss:cdother}

Next we classify \diffcs $d$ on $M$ such that $d(0)\in M\backslash \{0, 1\}$.
\begin{lemma}
	Let  $d\in \DW_{1}(M)$ such that $d(0)\in M\backslash \{0, 1\}$. Then the following statements hold.
	\begin{enumerate}
		\item $d(x)\in \{d(0), 1\}$	
		for all $x\in M\backslash \{0, d(0), 1\}$.\mlabel{it:5101}
		
		\item  There exists at most
		one element $u$ in $ M\backslash \{0,  1\}$ such that $d(u)=1$.
		\mlabel{it:5102}
		
		\item $d^2(0)\in \{0, d(0)\}$.	\mlabel{it:5103}
		
		\item If $d^2(0)=0$, then $d(1)\not\in \{d(0), 1\}$.	\mlabel{it:5104}
		
		\item If $d(x)=d(0)$ for all $x\in M\backslash \{1\}$, then $d(1)\in \{0, d(0)\}$.	\mlabel{it:5105}
		
		\item  If $|M|\geq 5$, then
		$d(x)=d(0)$ 	for all $x\in M\backslash \{d(0), 1\}$.
		\mlabel{it:5106}
		
		\item  If $|M|\geq 5$, then
		$d(1)\in \{0, d(0)\}$.
		\mlabel{it:5107}
	\end{enumerate}
	\mlabel{lem:510}
\end{lemma}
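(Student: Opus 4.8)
The plan is to fix the notation $a:=d(0)$, which by hypothesis is an atom of $M$, and to apply the difference identity Eq.~\meqref{eq:301} repeatedly to carefully chosen pairs $(x,y)$. Throughout I will use the three defining features of a quasi-antichain: any two distinct atoms $x,y$ satisfy $x\wedge y=0$ and $x\vee y=1$; for any atom $x$ one has $x\wedge 0=0$ and $x\wedge 1=x$; and the only elements lying above a fixed atom $a$ are $a$ and $1$. A recurring tool is that each of the three joinands on the right of Eq.~\meqref{eq:301} is bounded above by the left-hand side $d(x\wedge y)$.

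For \mref{it:5101}, I apply Eq.~\meqref{eq:301} with $y=0$ to an atom $x\neq a$: since $x\wedge a=0$, the right-hand side collapses and forces $a=d(x)\wedge a$, i.e. $a\leq d(x)$, so $d(x)\in\{a,1\}$. For \mref{it:5102}, if two distinct atoms $u,v$ both had $d$-value $1$, then evaluating at $(u,v)$ with $u\wedge v=0$ would give $d(0)=u\vee v\vee 1=1$, contradicting $d(0)=a\neq 1$. For \mref{it:5103} I must exclude $d(a)\notin\{0,a\}$: if $d(a)=w$ for an atom $w\neq a$, evaluating at $(a,w)$ forces $a\geq w$, impossible; and if $d(a)=1$, then picking any other atom $v\neq a$ (available since $|M|\geq 4$) and evaluating at $(a,v)$ forces $a\geq v$, again impossible.

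Parts \mref{it:5104} and \mref{it:5105} are single applications of the subterm bound. In \mref{it:5104}, $a\wedge d(1)\leq d(a\wedge 1)=d(a)=0$, which rules out both $d(1)=a$ and $d(1)=1$. In \mref{it:5105}, for each atom $x\neq a$ the bound $x\wedge d(1)\leq d(x)=a$ forces $x\wedge d(1)=0$; thus $d(1)$ lies above no atom other than possibly $a$, so $d(1)\in\{0,a\}$.

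Parts \mref{it:5106} and \mref{it:5107} are where the hypothesis $|M|\geq 5$ (at least three atoms) enters. For \mref{it:5106}, combining \mref{it:5101} and \mref{it:5102} leaves only the possibility that a single atom $x_0\neq a$ has $d(x_0)=1$; choosing a third atom $v\neq a,x_0$ and evaluating at $(x_0,v)$ then gives $a=v\vee a=1$, a contradiction, so every atom $\neq a$ maps to $a$. Finally \mref{it:5107} is assembled by splitting on the value of $d(a)$ supplied by \mref{it:5103}: if $d(a)=a$, then by \mref{it:5106} the map $d$ is constantly $a$ on $M\backslash\{1\}$ and \mref{it:5105} applies; if $d(a)=0$, then \mref{it:5104} already excludes $d(1)\in\{a,1\}$, and the only remaining danger $d(1)=w$ for an atom $w\neq a$ is eliminated by evaluating at $(w,1)$ using $d(w)=a$ from \mref{it:5106}, which again yields $1=a$. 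I expect the only real care needed is the bookkeeping in these last two parts, namely ensuring enough distinct atoms exist to produce the contradictions and tracking which earlier item supplies each value of $d$, rather than any single difficult computation.
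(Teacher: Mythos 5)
Your proposal is correct and follows essentially the same method as the paper's proof: fixing $a=d(0)$, exploiting the quasi-antichain structure, and repeatedly applying Eq.~\meqref{eq:301} together with the observation that each joinand on its right-hand side is bounded by $d(x\wedge y)$. The only deviations are cosmetic (e.g.\ your direct exclusion of $d(a)=1$ in \mref{it:5103} avoids the paper's detour through \mref{it:5101} and \mref{it:5102}, and your case split on $d(a)$ in \mref{it:5107} replaces the paper's one-line use of \mref{it:5106}), and all steps check out.
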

\begin{proof}
	Let  $d\in \DW_{1}(M)$ such that $d(0)\in M\backslash \{0, 1\}$.	
	
	\mnoindent
	\mref{it:5101}		
	For each $x\in M\backslash \{0, d(0), 1\}$, we have $d(0)\wedge x=0$, and so
	$d(0)=d(0\wedge x)= (d(0)\wedge x)\vee (0\wedge d(x))\vee (d(0)\wedge d(x))=d(0)\wedge d(x)$
	by Eq.\meqref{eq:301}. 
	Thus $d(0)\leq d(x)$, i.e,
	$d(x)=d(0)$ or $d(x)=1$, and hence \mref{it:5101} holds.	
	
	\mnoindent
	\mref{it:5102}
	If there exist $u, v\in M\backslash \{0,  1\}$ such  that $u\neq v$ and $d(u)=d(v)=1$, then $d(0)=d(u\wedge v)= (d(u)\wedge v)\vee (u\wedge d(v))\vee (d(u)\wedge d(v))=1, $
	a contradiction. Thus \mref{it:5102} holds.	
	
	\mnoindent
	\mref{it:5103} If $d(d(0))=v\in M\backslash \{0, d(0), 1\}$, then
	we have  by Eq.\meqref{eq:301} that
	$v=d(d(0))\wedge v\leq d(d(0)\wedge v)=d(0),$
	a contradiction. Thus $d(d(0))\in \{0, d(0), 1\}$.
	
	If $d(d(0))=1$, then we have by Items \mref{it:5101} and \mref{it:5102} that $d(x)=d(0)$ for all  $x\in M\backslash \{0, d(0), 1\}$. Fix $c \in M\backslash \{0, d(0), 1\}$. Then $d(c)=d(0)$, which implies by Eq.\meqref{eq:301} that
	$$d(0)=d(d(0)\wedge c)= (d(d(0))\wedge c)\vee (d(0)\wedge d(c))\vee (d(d(0))\wedge d(c))=c\vee d(c)=1,$$
	a contradiction. Thus $d(d(0))\neq 1$, and hence \mref{it:5103} holds.
	
	\mnoindent
	\mref{it:5104} If $d(d(0))=0$, then we have  by Eq.\meqref{eq:301} that
	$d(1)\wedge d(0)\leq d(1\wedge d(0))=d(d(0))=0,$
	and so $d(1)\wedge d(0)=0$. Thus $d(1)\not\in \{ d(0), 1\}$,  yielding \mref{it:5104}.
	
	\mnoindent
	\mref{it:5105} Assume that  $d(x)=d(0)$ for all $x\in M\backslash \{1\}$. If $d(1)=1$ or $d(1)=v\in M\backslash \{0, d(0), 1\}$.
	Then we have
	by Eq.\meqref{eq:301} that
	$v=d(1)\wedge v\leq d(1\wedge v)=d(v)=d(0),$
	a contradiction. Thus $d(1)\in \{0, d(0)\}$ which implies \mref{it:5105}.
	
	\mnoindent
	\mref{it:5106} Assume that $|M|\geq 5$, and
	there exists $u\in M\backslash \{0, d(0),  1\}$ such  that  $d(u)=1$. Fix $v\in  M\backslash \{0, d(0), u, 1\}$. Then  $u\wedge v=0$, and so 
	$d(0)=d(u\wedge v)= (d(u)\wedge v)\vee (u\wedge d(v))\vee (d(u)\wedge d(v))=v\vee d(v), $
	which implies that $v\leq d(0)$, a contradiction. Thus
	we have by Item \mref{it:5101} that $d(x)=d(0)$ 	for all $x\in M\backslash \{ d(0), 1\}$, that is, \mref{it:5106}  holds.	
	
	\mnoindent
	\mref{it:5107} Assume that $|M|\geq 5$.  If $d(1)=1$ or $d(1)=v\in M\backslash \{0, d(0), 1\}$, then 
	$d(v)=d(0)$ by Item \mref{it:5106}, which implies
	by Eq.\meqref{eq:301} that
	$v=d(1)\wedge v\leq d(1\wedge v)=d(v)=d(0),$
	a contradiction. Thus $d(1)\in \{0, d(0)\}$, giving \mref{it:5107}.	
\end{proof}

\begin{theorem}
	Let $M$ be a quasi-chain with $|M|\geq 5$. Then all \diffc $d$  on $M$ such that $d(0)=a\in M\backslash \{0, 1\}$ are given as follows.
	\begin{enumerate}
		\item $\textbf{C}_{(a)}$, defined in Example \mref{exa:301} \mref{it:3011e};
		
		\item  $(\textbf{C}_{(a)})^{0}$, defined in Eq.\meqref{eq:du};
		
		\item  $\lambda_{a}$, defined in Lemma \mref{lem:511}.
	\end{enumerate}	
	
	\mlabel{pro:503}
\end{theorem}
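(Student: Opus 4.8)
The plan is to use the structural constraints gathered in Lemma~\mref{lem:510} to reduce the entire classification to the two unknown values $d(a)$ and $d(1)$, and then to match each surviving possibility against one of the three named families. The hypothesis $|M|\geq 5$ is essential, since it is exactly what unlocks parts~\mref{it:5106} and~\mref{it:5107} of that lemma.

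First I would dispatch the easy direction, that each of the three listed operators is indeed a \diffc on $M$ with $d(0)=a$. The operator $\textbf{C}_{(a)}$ lies in $\DW_1(M)$ by Example~\mref{exa:301}~\mref{it:3011e}, and $\textbf{C}_{(a)}(0)=a$. Since $\textbf{C}_{(a)}(1)=a$ and $0\leq a$, Proposition~\mref{por:111}~\mref{it:111} shows $(\textbf{C}_{(a)})^{0}\in\DW_1(M)$, and $(\textbf{C}_{(a)})^{0}(0)=\textbf{C}_{(a)}(0)=a$ because $0\neq 1$. Finally $\lambda_a\in\DW_1(M)$ by Lemma~\mref{lem:511}, and $\lambda_a(0)=a$ since $0\notin\{a,1\}$. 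For the converse, let $d\in\DW_1(M)$ with $d(0)=a\in M\setminus\{0,1\}$. Because $|M|\geq 5$, Lemma~\mref{lem:510}~\mref{it:5106} gives $d(x)=a$ for every $x\in M\setminus\{a,1\}$, so $d$ is completely determined by the pair $(d(a),d(1))$. Moreover Lemma~\mref{lem:510}~\mref{it:5103} yields $d(a)=d^2(0)\in\{0,a\}$, and Lemma~\mref{lem:510}~\mref{it:5107} yields $d(1)\in\{0,a\}$. Thus $d$ is one of at most four operators, indexed by the choices of $(d(a),d(1))$ in $\{0,a\}\times\{0,a\}$.

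The decisive step is to eliminate the pair $(d(a),d(1))=(0,a)$. If $d(a)=0$ then $d^2(0)=d(a)=0$, so Lemma~\mref{lem:510}~\mref{it:5104} forces $d(1)\notin\{a,1\}$, which contradicts $d(1)=a$. The three remaining pairs correspond precisely to the listed families: the pair $(a,a)$ gives $d(x)=a$ for all $x\in M$, that is $d=\textbf{C}_{(a)}$; the pair $(a,0)$ gives $d(x)=a$ for all $x\neq 1$ together with $d(1)=0$, that is $d=(\textbf{C}_{(a)})^{0}$; and the pair $(0,0)$ gives $d(a)=d(1)=0$ with $d(x)=a$ elsewhere, that is $d=\lambda_a$. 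This exhausts all cases. The only genuine subtlety is the exclusion of the fourth pair, and this is exactly where the weight-one identity is used in an essential way, through part~\mref{it:5104} of Lemma~\mref{lem:510}; the remaining work is routine bookkeeping against the defining formulas of $\textbf{C}_{(a)}$, $(\textbf{C}_{(a)})^{0}$, and $\lambda_a$.
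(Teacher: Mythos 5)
Your proposal is correct and follows essentially the same route as the paper: both directions rest on the same ingredients (Example \mref{exa:301}, Proposition \mref{por:111}, Lemma \mref{lem:511} for membership, and parts \mref{it:5103}, \mref{it:5104}, \mref{it:5106}, \mref{it:5107} of Lemma \mref{lem:510} for the converse), with the only cosmetic difference being that you enumerate all four pairs $(d(a),d(1))\in\{0,a\}^2$ and strike out $(0,a)$ via \mref{it:5104}, where the paper first splits on $d(a)$ and invokes \mref{it:5105} in the $d(a)=a$ branch. No gaps.
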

\begin{proof}
	Assume that $M$ is a quasi-chain with $|M|\geq 5$. 
	 We know  by Example \mref{exa:301} \mref{it:3011e}, Proposition \mref{por:111} and Lemma \mref{lem:511} that $\textbf{C}_{(a)}$, $(\textbf{C}_{(a)})^{0}$ and $\lambda_{a}$ are all in $\DW_{1}(M)$.
	
	Conversely,	let $d\in \DW_{1}(M)$ such that  $d(0)=a\in M\backslash \{0, 1\}$. Then we have by Lemma
	\mref{lem:510} \mref{it:5103}, \mref{it:5106} that
	$d(a)\in \{0, a\}$ and $  d(x)=a$ for all $x\in M\backslash \{a, 1\}$.
	
	If $d(a)=a$, then $d(y)=a$ for all $y\in M\backslash \{ 1\}$,  so $d$  equals 
	$ \textbf{C}_{(a)}$ or $ (\textbf{C}_{(a)})^{0}$ by Lemma
	\mref{lem:510} \mref{it:5105}.
	
	If $d(a)=0$, then $d(1)=0$ by Lemma
	\mref{lem:510} \mref{it:5104} and  \mref{it:5107}. Thus
	$d=\lambda_{a}$.
\end{proof}

\subsubsection{Enumeration of the \diffcs}
\mlabel{sss:enum}

At the end of the paper, we enumerate \diffcs on finite quasi-antichain in Theorem \mref{the:414}, together with the classification for $M_2$. 

First recall that in \cite{GG}, we prove that
\begin{lemma}\cite[Theorem 3.21]{GG}
	Let $M$ be a $n$-element quasi-chain with $n\geq 4$. Then
	$|\DO(M)|=2+\sum_{k=1}^{n-2}(k+1)\tbinom{n-2}{k}$. 	
	\mlabel{lem:512}
\end{lemma}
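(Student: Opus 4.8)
The plan is to classify the derivations on $M$ completely and count them directly, exploiting that every element of $\DO(M)$ is forced to be decreasing. Write $M=\{0,b_1,\dots,b_{n-2},1\}$ with atoms $b_i$. By Lemma~\ref{pro:201}\,\ref{it:2011} any $d\in\DO(M)$ satisfies $d(0)=0$ and $d(x)\leq x$, so $d(b_i)\in\{0,b_i\}$ for each $i$, while $d(1)$ may in principle be any element of $M$. Thus a candidate derivation is encoded by the set $S=\{i : d(b_i)=b_i\}$ of fixed atoms together with the value $d(1)$, and the problem reduces to deciding which pairs $(S,d(1))$ actually satisfy Eq.~\eqref{eq:der}.

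First I would test the identity \eqref{eq:der} on every pair $x,y\in M$. The diagonal pairs hold automatically because $d$ is decreasing, and any pair involving $0$ is trivial. For two distinct atoms $b_i,b_j$ we have $b_i\wedge b_j=0$ and $d(b_i)\wedge b_j=b_i\wedge d(b_j)=0$, so both sides of \eqref{eq:der} vanish. Hence the only genuine constraint comes from the pairs $(b_i,1)$, where \eqref{eq:der} reads $d(b_i)=(d(b_i)\wedge 1)\vee(b_i\wedge d(1))$, i.e. $b_i\wedge d(1)\leq d(b_i)$. Translating this: if $i\notin S$ then $d(b_i)=0$ forces $b_i\wedge d(1)=0$, which excludes $d(1)=b_i$ and $d(1)=1$; if $i\in S$ no condition arises. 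So $(S,d(1))$ yields a derivation precisely when $d(1)$ avoids $1$ and every $b_i$ with $i\notin S$.

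It then remains to count. Fixing $|S|=k$, the admissible values of $d(1)$ are $0$ together with the $b_j$ for $j\in S$, giving $k+1$ choices --- except that $d(1)=1$ becomes admissible as well exactly when $S$ contains all atoms, i.e. $k=n-2$ (the single resulting map being the identity). Summing over the $\binom{n-2}{k}$ subsets of size $k$ gives
\[
|\DO(M)|=\sum_{k=0}^{n-2}\binom{n-2}{k}(k+1)+1,
\]
where the trailing $+1$ records the extra value $d(1)=1$ available at $k=n-2$. Since the $k=0$ summand equals $1$, this rearranges into $2+\sum_{k=1}^{n-2}(k+1)\binom{n-2}{k}$, as claimed.

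The delicate points are twofold: verifying that $b_i\wedge d(1)\leq d(b_i)$ is the \emph{only} surviving constraint, which relies on the collapse $b_i\wedge b_j=0$ characteristic of a quasi-antichain; and correctly handling the exceptional admissibility of $d(1)=1$ when every atom is fixed. This exception is exactly what promotes the otherwise expected leading constant $1$ into the $2$ appearing in the stated formula.
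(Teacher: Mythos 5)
Your argument is correct. Note first that the paper itself supplies no proof of this lemma: it is imported verbatim from \cite[Theorem 3.21]{GG}, so there is no internal argument to compare against. Your proposal therefore serves as a self-contained verification, and it holds up: by Lemma~\mref{pro:201}~\mref{it:2011} every $d\in\DO(M)$ is decreasing with $d(0)=0$, forcing $d(b_i)\in\{0,b_i\}$; the pairs $(0,y)$ and $(b_i,b_j)$ with $i\neq j$ impose nothing because all the relevant meets collapse to $0$; and the pair $(b_i,1)$ reduces Eq.~\meqref{eq:der} to $b_i\wedge d(1)\leq d(b_i)$, which is exactly the condition that $d(1)$ lie in $\{0\}\cup\{b_j : j\in S\}$, augmented by $d(1)=1$ precisely when $S$ is the full set of atoms. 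The resulting count $1+\sum_{k=0}^{n-2}(k+1)\tbinom{n-2}{k}=2+\sum_{k=1}^{n-2}(k+1)\tbinom{n-2}{k}$ is the stated formula, and it agrees with the value $9$ at $n=4$ recorded in Theorem~\mref{the:414}. The two points you flag as delicate --- that $(b_i,1)$ is the only surviving constraint, and the exceptional admissibility of $d(1)=1$ --- are indeed where the whole content lies, and you handle both correctly.
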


\begin{theorem}
	\begin{enumerate}
		\item 
Let $M_{2}=\{0, a, b, 1\}$ be the $4$-element quasi-antichain. Then
all $36$ \diffcs on $M_{2}$ are given
as follows.

\begin{enumerate}
	\item $9$ derivations;	
		
	\item $9$ \diffcs on $M_{2}$
	 such that
	$d(0)=a$, which include
	 $\textbf{C}_{(a)} ~($ in Example \mref{exa:301} \mref{it:3011e}$)$,
	 $(\textbf{C}_{(a)})^{0}$,
	 $\lambda_{a}$ (in Lemma \mref{lem:511}),
	 $\psi_{(a)}$ (in Theorem \mref{the:001}),
	 	$(\psi_{(a)})^{u}$ for all $u\in M_{4}\backslash \{1\}$,
	 $\Phi_{a}$  (in Lemma \mref{lem:513}),
	 $(\Phi_{a})^{0}$;	
	\item $9$ \diffcs on $M_{2}$
	such that
	$d(0)=b$ in the same way as $($b$)$;
	\item  $9$ \diffcs on $M_{2}$
	such that
	$d(0)=1$, which include
	$\textbf{C}_{(1)}~ ($ in Example \mref{exa:301} \mref{it:3011e}$)$,
$(\textbf{C}_{(1)})^{u}$ for all $u\in M_{4}\backslash \{1\}$, 		
$ \phi_{1}=\left( \begin{matrix}
0   & a    & b & 1 \\
1   &  b   & 1 &  b
\end{matrix}
\right)$, $ \phi_{2}=\left( \begin{matrix}
0   & a    & b & 1 \\
1   &  1   & a &  a
\end{matrix}
\right)$, $(\phi_{1})^{0}, (\phi_{2})^{0}$ and $ \phi_{3}=\left( \begin{matrix}
0   & a    & b & 1 \\
1   &  b   & a &  0
\end{matrix}
\right)$.
\end{enumerate}
\mlabel{it:51-1}
\item  For $n\geq 5$,  $|\DW_{1}(M_{n-2})|=\dfrac{3n^{3}-22n^{2}+61n-50}{2}+\sum_{k=1}^{n-2}(k+1)\tbinom{n-2}{k}.$
\mlabel{it:51-2}
	\end{enumerate}
\mlabel{the:414}
\end{theorem}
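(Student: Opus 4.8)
The plan is to treat the two parts of Theorem~\mref{the:414} separately, since part~\mref{it:51-1} is a finite verification for the Boolean algebra $M_2$ and part~\mref{it:51-2} is a counting argument that assembles the classification results from Sections~\mref{sss:cd=1} and \mref{sss:cdother}. For part~\mref{it:51-1}, I would organize the enumeration of $\DW_1(M_2)$ according to the value of $d(0)$, exactly as the statement is phrased: the four cases $d(0)=a$, $d(0)=b$, $d(0)=1$, and the derivations (where $d(0)=0$). The derivation count is $9$ by Lemma~\mref{lem:512} with $n=4$, giving $2+\sum_{k=1}^{2}(k+1)\binom{2}{k}=2+2\cdot 2+3\cdot 1=9$. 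For the $d(0)=a$ case, since $|M_2|=4<5$, the general classification of Theorem~\mref{pro:503} does not apply, so I would instead argue directly: using Lemma~\mref{lem:510}\mref{it:5101}, \mref{it:5102}, \mref{it:5103} one narrows the possible values of $d$ on $\{a,b,1\}$, and then exhibit the listed nine operators and check no others survive. By the symmetry interchanging $a$ and $b$ (an automorphism of $M_2$), the $d(0)=b$ case contributes another $9$ by the same analysis.

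For the $d(0)=1$ case of part~\mref{it:51-1}, I would invoke Theorem~\mref{pro:502}, which classifies all \diffcs with $d(0)=1$ on any $M$ with $|M|\geq 4$. Specializing to $M_2=\{0,a,b,1\}$: when $d(1)=1$ we get only $\textbf{C}_{(1)}$; when $d(1)\in\{a,b\}$ or $d(1)=0$ we read off the operators $(\textbf{C}_{(1)})^u$ and the $\phi_i$. The subtle point is that the forms $\theta_{u\to v}$, $\alpha^{u\leftrightarrow v}$, $\eta_{u\to v}$, $\gamma^{u\leftrightarrow v}$ from Theorem~\mref{pro:502} collapse when $|M\setminus\{0,1\}|=2$: for instance $\gamma^{u\leftrightarrow v}$ requires $|M|\geq 5$, so in $M_2$ only the degenerate instances remain, which are exactly the explicitly tabulated $\phi_1,\phi_2,\phi_3$ and their modifications. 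I would verify that these nine exhaust the list and are distinct, giving the total $9+9+9+9=36$.

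For part~\mref{it:51-2}, the strategy is to partition $\DW_1(M_{n-2})$ by the value of $d(0)\in\{0\}\cup(M\setminus\{0,1\})\cup\{1\}$ and count each block. The block $d(0)=0$ is $\DO(M)$ by Proposition~\mref{pro:500}, contributing $2+\sum_{k=1}^{n-2}(k+1)\binom{n-2}{k}$ by Lemma~\mref{lem:512}. For each of the $n-2$ atoms $a$ with $d(0)=a$, Theorem~\mref{pro:503} gives exactly three operators $\textbf{C}_{(a)}, (\textbf{C}_{(a)})^0, \lambda_a$, contributing $3(n-2)$ in total. The block $d(0)=1$ is counted from Theorem~\mref{pro:502}: the case $d(1)=1$ gives $1$; the case $d(1)=0$ gives $1+\binom{n-2}{1}+\binom{n-2}{2}$ from $(\textbf{C}_{(1)})^0$, the $\theta_{u\to v}$ (choosing the ordered pair with $d(u)=v$, but here it is $u$ and its image $v$, so I must count ordered pairs for $\theta$ and unordered for $\alpha$), and the $\alpha^{u\leftrightarrow v}$; and similarly the case $d(1)=b\in M\setminus\{0,1\}$ must be summed over the $n-2$ choices of $b$, with the appropriate constraints $u\neq b$ for $\eta$ and $u,v\in M\setminus\{0,b,1\}$ for $\gamma$. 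Summing these contributions and simplifying should yield the closed form $\tfrac{3n^3-22n^2+61n-50}{2}+\sum_{k=1}^{n-2}(k+1)\binom{n-2}{k}$.

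\textbf{The main obstacle} I anticipate is the combinatorial bookkeeping in the $d(0)=1$ block: one must count the parametrized families $\theta_{u\to v}, \alpha^{u\leftrightarrow v}, \eta_{u\to v}, \gamma^{u\leftrightarrow v}$ without double-counting and with the exact index constraints from Theorem~\mref{pro:502}, correctly distinguishing ordered choices (for $\theta,\eta$, where $u$ and its image $v$ play asymmetric roles) from unordered choices (for $\alpha,\gamma$, where $u\leftrightarrow v$ is a transposition), and then summing over $b$. Getting these polynomial-in-$n$ counts right and collapsing them to the stated cubic is where the arithmetic is delicate; I would verify the final formula against the $n=4$ value $36$ from part~\mref{it:51-1} and, if possible, one further small case as a consistency check, noting that for $n=4$ the term $\tfrac{3n^3-22n^2+61n-50}{2}$ would need separate treatment since part~\mref{it:51-2} is only asserted for $n\geq 5$.
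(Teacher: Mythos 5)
Your proposal matches the paper's proof essentially step for step: both parts partition $\DW_1$ by the value of $d(0)$, use Proposition~\mref{pro:500} with Lemma~\mref{lem:512} for the $d(0)=0$ block, handle $d(0)=a$ on $M_2$ by direct case analysis via Lemma~\mref{lem:510} (since Theorem~\mref{pro:503} needs $|M|\geq 5$), invoke Theorem~\mref{pro:502} for $d(0)=1$, and for $n\geq 5$ sum the counts from Theorems~\mref{pro:502} and \mref{pro:503}. The only blemish is the transient ``$1+\tbinom{n-2}{1}+\tbinom{n-2}{2}$'' for the $d(1)=0$ subcase, where the $\theta_{u\to v}$ count should be $(n-2)(n-3)$ ordered pairs as in the paper --- but you flag this yourself in the same sentence, so the plan is sound.
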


\begin{proof} 
\mnoindent
\mref{it:51-1}	Let $d\in \DW_{1}(M_{2})$. 
If $d(0)=0$, then $d$ is a derivation by Proposition \mref{pro:500}, and so 
$d$ has $9$ choices by Lemma \mref{lem:512}.

If $d(0)=a$, then	since $M_{2}$ is a distributive lattice, we know by Example \mref{exa:301} \mref{it:3011e}, Proposition \mref{por:111}, Lemma \mref{lem:511}, Theorem \mref{the:001} and Lemma \mref{lem:513} that $\textbf{C}_{(a)}$, $\textbf{C}_{(a)}^{0}$,  $\lambda_{a}$,  $\psi_{(a)}$, $(\psi_{(a)})^{u}$, $\Phi_{a}$ and $(\Phi_{a})^{0}$	 are all in $\DW_{1}(M_{2})$.
Now we  claim that 	 $d$ is  of the nine forms. 
In fact, we have 	 $d(b)\in \{a, 1\}$ and
$d(a)\in \{0, a\}$ by Lemma~\mref{lem:510} \mref{it:5101} and \mref{it:5103}. Consider the following cases.

\begin{enumerate}
	\item[$\textcircled{1}$.]  
	$d(a)=d(b)=a$. Then $d(y)=a$ for all $y\in M\backslash \{ 1\}$, and so $d$ is equal to
$ \textbf{C}_{(a)}$ or $ (\textbf{C}_{(a)})^{0}$ by Lemma
\mref{lem:510} \mref{it:5105}.

	\item[$\textcircled{2}$.] 
	$d(a)=a$ and $d(b)=1$.  Since $M_{2}$ is distributive,
$d$
is equal to  $\psi_{(a)}$
or $ (\psi_{(a)})^{u}$ for some $u\in M_{2}\backslash \{1\}$ by
Theorem \mref{the:001} and Proposition \mref{por:111}.

\item[$\textcircled{3}$.]  $d(a)=0$ and $d(b)=1$. Since $d(1)\in \{0, b\}$ by Lemma
	\mref{lem:510} \mref{it:5104}, we get	$d$ is equal to $\Phi_{a}$ or $(\Phi_{a})^{0}$
	by Lemma \mref{lem:513} and Proposition \mref{por:111}.
	
		\item[$\textcircled{4}$.]  $d(a)=0$ and $d(b)=a$.  Since $d(1)\in \{0, b\}$ by Lemma
		\mref{lem:510} \mref{it:5104},
		we get $d$ is equal to $\lambda_{a}$ or
		$\Theta$.
		But $\Theta\not\in \DW_{1}(M_{2})$ by Lemma \mref{lem:513}, hence $d=\lambda_{a}$
		by Lemma \mref{lem:511}.	
\end{enumerate}

If $d(0)=b$, then similarly $d$ has $9$ choices.
	
If $d(0)=1$, then, by Theorem \mref{pro:502}, Proposition \mref{por:111} and Lemmas \mref{lem:501}, \mref{lem:506}, $d$ is of the nine forms:
 $\textbf{C}_{(1)}$,  
 $(\textbf{C}_{(1)})^{u}$ for  $u\in M_{2}\backslash \{1\}$, 
 $ \beta_{b}=\phi_{1}=\left( \begin{matrix}
 0   & a    & b & 1 \\
 1   &  b   & 1 &  b
 \end{matrix}
 \right)$ (as in Lemma \mref{lem:501} \ref{it:5011}), $ \phi_{2}=\left( \begin{matrix}
 0   & a    & b & 1 \\
 1   &  1   & a &  a
 \end{matrix}
 \right)$, $(\phi_{1})^{0}=\theta_{a\to b}$, $ (\phi_{2})^{0}=\theta_{b\to a}$ and $ \alpha^{u \leftrightarrow v}=\phi_{3}=\left( \begin{matrix}
 0   & a    & b & 1 \\
 1   &  b   & a &  0
 \end{matrix}
 \right)$.	

\mnoindent
\mref{it:51-2} Let $n\geq 5$ and  $d\in\DW_{1}(M_{n-2})$. If $d(0)=0$, then $d$ has $2+\sum_{k=1}^{n-2}(k+1)
\tbinom{n-2}{k}$ choices by Proposition \mref{pro:500} and Lemma \mref{lem:512}.	

If $d(0)=d(1)=1$, then $d=\textbf{C}_{(1)}$ by Proposition \mref{pro:001}, and so $d$ has only one choice.
If $d(0)=1$, $ d(1)\in M_{n-2}\backslash \{0, 1\}$, then $d$ has $(n-2)(1+(n-3)^{2}+\tbinom{n-3}{2})$
choices by Theorem \mref{pro:502}.
 If $d(0)=1, d(1)=0$, then $d$ has $1+(n-2)(n-3)+\tbinom{n-2}{2}$
 choices
 by Theorem \mref{pro:502}.
  
 If  $d(0)\in M_{n-2}\backslash \{0, 1\}$, then $d$ has $3(n-2)$
 choices by Theorem \mref{pro:503}.

Thus  when $n\geq 5$, we get
\begin{eqnarray*}
&&|\DW_{1}(M_{n-2})|\\
&=&2+\sum_{k=1}^{n-2}(k+1)\tbinom{n-2}{k}+1+(n-2)\Big(1
 +(n-3)^{2}+\tbinom{n-3}{2}\Big)\\
 &&+1+(n-2)(n-3)+\tbinom{n-2}{2}+3(n-2)\\
&=&\dfrac{1}{2}(3n^{3}-22n^{2}+61n-50)+\sum_{k=1}^{n-2}(k+1)\tbinom{n-2}{k}.  \hspace{5.5cm} \qedhere 
\end{eqnarray*}
\end{proof}

 We finally list $|\DW_{1}(M_{n-2})|$ for $4\leq n \leq 11$ in the following table.
 \begin{center}
 	\begin{tabular}{|c|c|c|c|c|c|c|c|c|}
 		\hline $n$ & 4 &5 & 6 & 7 &8 & 9 & 10&11\\
 		\hline $\dfrac{1}{2}(3n^{3}-22n^{2}+61n-50)$ &  &40& 86 & 164 &283 & 452 & 680&976\\
 			\hline $\sum_{k=1}^{n-2}(k+1)\tbinom{n-2}{k}$ &  &19& 47 & 111 &255 & 575 & 1279&2815\\
 				\hline $|\DW_{1}(M_{n-2})|$ & 36 &59& 133 & 275 &538 & 1027 & 1959&3791\\\hline
 	\end{tabular}
 \end{center}
 This integer sequence $36, 59,133,275,538,1027,1959,3791,\cdots$ is not in the database OEIS~\mcite{OEIS}.

\mnoindent
{\bf Acknowledgments.}
This work is supported by the NSFC Grants (Nos. 11801239, 12171022).
We thank
the anonymous referees for their helpful suggestions.
\delete{
\smallskip 

\noindent
{\bf Conflict of Interest.} The authors have no competing interests to declare that are relevant to the content of this article.
}
\smallskip

\noindent
{\bf Data Availability Statement.} No data are associated with this article. 


\vspace{-.3cm}

\end{document}